\documentclass[12pt]{article}
\usepackage[utf8]{inputenc}
\usepackage[T1]{fontenc}
\usepackage{lmodern}
\usepackage{amsmath,amsthm,amsfonts,amssymb, stmaryrd,bbm,mathrsfs, dsfont}
\usepackage{mathtools}
\usepackage{graphicx}
\usepackage{enumitem}
\usepackage{url}
\usepackage{appendix}
\usepackage{textcomp}
\usepackage[cal=euler]{mathalfa}
\def\llbracket{[\hspace{-.10em} [ }
\def\rrbracket{ ] \hspace{-.10em}]}
\usepackage[letterpaper,top=3cm,bottom=3cm,left=2.5cm,right=2.5cm,marginparwidth=1.5cm]{geometry}
\usepackage[font={small,sf}, labelfont={sf,bf}, margin=1cm]{caption}
\usepackage{csquotes}

\usepackage[english]{babel}

\usepackage[usenames,dvipsnames]{xcolor}
\definecolor{darkblue}{rgb}{0.0, 0.2, 0.6}
\usepackage[pdftitle={},
  pdfauthor={},
colorlinks=true,linkcolor=RoyalBlue,urlcolor=MidnightBlue,citecolor=darkblue,bookmarks=true,bookmarksopen=true,bookmarksopenlevel=2,unicode=true,linktocpage]{hyperref}

\numberwithin{equation}{section}

\usepackage[capitalise,noabbrev]{cleveref}
\usepackage{thmtools}

\usepackage[usenames,dvipsnames]{xcolor}
\usepackage[labelfont={bf,sf}, textfont={sf}]{caption}
\usepackage{subfigure}
\theoremstyle{plain}
\newtheorem{thm}{Theorem}[section]
\newtheorem*{thm*}{Theorem}
\newtheorem{prop}[thm]{Proposition}
\newtheorem*{prop*}{Proposition}
\newtheorem{defn}[thm]{Definition}
\newtheorem*{Def*}{Definition}
\newtheorem{lem}[thm]{Lemma}
\newtheorem*{Cor*}{Corollary}
\newtheorem{Cor}[thm]{Corollary}

\newtheorem{Assump}[thm]{Assumption}

\theoremstyle{definition}
\newtheorem{Ex}[thm]{Example}
\newtheorem*{Ex*}{Example}
\newtheorem{rem}[thm]{Remark}
\newtheorem*{rem*}{Remarques}

\newcommand{\D}{\mathrm{d}}
\newcommand{\Bd}[1]{\boldsymbol{#1}}

\newcommand{\ord}{\mathrm{ord}}

\usepackage{array}
\newcolumntype{L}[1]{>{\raggedright\arraybackslash}p{#1}}
\DeclareUnicodeCharacter{0308}{\"{}}
\title{Critical Self-Similar Markov Trees}
\author{Nicolas Curien\thanks{Universit\'e Paris-Saclay, \url{nicolas.curien@universite-paris-saclay.fr}},   Xingjian Hu \thanks{Fudan University, \url{xjhu22@m.fudan.edu.cn}} and  Dongjian Qian \thanks{Fudan University, \url{djqian22@m.fudan.edu.cn}}}
\date{}

\begin{document}

\maketitle

\begin{abstract}
Recently introduced and studied in \cite{BertoinJean2024SMta}, a self-similar Markov tree (ssMt) is a random decorated tree that vastly generalises the fragmentation tree. We study here the \emph{critical case} that was left aside in \cite{BertoinJean2024SMta}. Borrowing techniques from branching random walk, in particular the recent result of Aïdékon--Hu--Shi \cite{AidekonElie2024Bodt}, we can complete the picture by constructing critical ssMt, computing their fractal dimension and studying their associated harmonic and length measures using spinal decomposition.
\end{abstract}

\section{Introduction}
Self-similar Markov trees have  recently been introduced in \cite{BertoinJean2024SMta}. They can be seen as generalisations of the fragmentation trees of Haas \& Miermont \cite{HaasBenedicte2004TGoS}, or as the genealogical trees underlying the Lamperti transformation of branching L\'evy processes \cite{BertoinJean2019IRPM}. They encompass and unify various models including  Aldous' famous Brownian Continuum Random tree \cite{AldousDavid1991TCRT} and its stable generalisations \cite{DuquesneThomas2002RtLp} as well as more exotic trees such as the Brownian Cactus \cite{CurienNicolas2013TBcI} sitting inside the Brownian sphere \cite{LeGallJean-Francois2020GPIB, bertoin2018random}, or the scaling limits of peeling or parking trees \cite{BertoinJean2018Misg,contat2025universality}. They are conjectured to describe scaling limits of multi-type Bienaym\'e--Galton--Watson trees, exactly as Lamperti's self-similar Markov processes describe the scaling limits of positive Markov chains on the integers. Their central role in the theory of random tree sparked a recent interest, see e.g. \cite{bertoin2026local, curien2025growing} for their intrinsic studies or \cite{aidekon2022growth, LeGallJean-Francois2020GPIB, bertoin2018random, DaSilvaWilliam2025GBce} for their connections with various models of random planar geometry.

Formally, a self-similar Markov tree (ssMt) is a family of laws $( \mathbb{Q}_x : x >0)$ of decorated continuous random $\mathbb{R}$-trees $\mathtt{T} = (T, d_T, \rho, g)$ where $\rho$ is the root of the tree and $g : T \to \mathbb{R}_+$ is a real decoration which is upper semi-continuous (usc) on $T$ and positive on its skeleton. Under the law $\mathbb{Q}_x$, the random decorated tree $\mathtt{T}$ starts from the initial decoration  $g(\rho)=x$ and enjoys the following two eponymic properties:
\begin{enumerate}
    \item \textbf{Markov property.} For each $h>0$, conditioned on the subtree truncated at height $h$, the decorated subtrees above $h$ are independent of each other and have law $\mathbb{Q}_y$ if the decoration at its root is $y$.
     \item \textbf{Self-similarity.} There exists $\alpha > 0$,  the \textbf{self-similar index}, such that for each $x>0$, the tree  $(T, d_{T}, \rho, g)$ under $\mathbb{Q}_x$ has the same law as $(T,x^{\alpha}d_T,\rho,x\cdot g) $ under $\mathbb{Q}_1$.
\end{enumerate}
Following \cite[Section 2]{BertoinJean2024SMta}, a self-similar Markov tree can be described by its characteristic quadruplet $(\sigma^2, \mathrm{a}, \boldsymbol{\Lambda}; \alpha)$, which through a Lamperti transformation encapsulates the law of their underlying branching L\'evy processes. In particular, the generalised L\'evy measure $\boldsymbol{\Lambda}$ is a (possibly infinite) measure on the space $$ \mathcal{S} = \{ \mathbf{u}=(u_0,u_1, ...) : u_0  \in \mathbb{R} \mbox{ and }  u_1\geq u_2 \geq ... \in \mathbb{R} \cup \{ - \infty\}\}$$ which describes the splitting rules: Informally, the ssMt is the genealogical tree of a system of individuals evolving  independently of each other, and where an individual of decoration $x >0$ sees its decoration instantaneously moved to $x\cdot\mathrm{e}^{y_0}$ while giving rise to a  family of new individuals with decorations $x \cdot \mathrm{e}^{y_1}, x \cdot \mathrm{e}^{y_2}, ... $ (which are interpreted as the birth of new individuals)
 \begin{eqnarray} x \to (x \cdot \mathrm{e}^{y_0}, (x \cdot \mathrm{e}^{y_1}, x \cdot \mathrm{e}^{y_2}, \cdots )) \quad \mbox{ at a rate } \quad x^{-\alpha} \cdot \boldsymbol{\Lambda}( \mathrm{d} \mathbf{y}),  \label{eq:intensitysplit} \end{eqnarray}  where $\alpha >0$ is the self-similarity parameter. The projection $\Lambda_0$ of $\boldsymbol\Lambda$ on  its first coordinate is required to be a L\'evy measure, and its projection $\Bd\Lambda_1$ onto the second coordinate satisfies a mild integrability assumption (see \eqref{eq: generalised Levy measure}). The  coefficient $ \mathrm{a} \in \mathbb{R}$ encodes the drift term while  $\sigma^2$ controls the Brownian part of the evolution of the decoration along branches. See Section \ref{sec: construction of the ssMts} or \cite{BertoinJean2024SMta} for details. When the starting decoration $x>0$ is fixed, the law $ \mathbb{Q}_x$ is the distribution of the above genealogical tree started with an individual of decoration $x$.

 A crucial quantity to consider is the \textbf{cumulant function} defined by
\begin{equation} \label{def: kappa}
\begin{split}
    \kappa(\gamma)&= \frac{1}{2}\sigma^2\gamma^2 + \mathrm{a} \gamma + \int_{ \mathcal{S}} \boldsymbol{\Lambda}( \D \mathbf{u}) \left( \mathrm{e}^{\gamma  u_0}-1 - \gamma u_0 \mathbf{1}_{\{|u_0| \leq 1\}} + \sum_{i=1}^\infty \mathrm{e}^{\gamma u_i}\right), \\
	&= \psi(\gamma) + \int_{ \mathcal{S}} \boldsymbol{\Lambda}( \D \mathbf{u}) \bigg(\sum_{i=1}^\infty \mathrm{e}^{\gamma u_i}\bigg)
\end{split}
\end{equation}
where $\psi(\gamma)$ is the Laplace exponent of the L\'evy process with characteristics $(\sigma^2,  \mathrm{a}, \Lambda_0)$. This function $\kappa$ also appears as the Biggins transform of the underlying branching L\'evy process. In particular, if $\kappa$ takes strictly negative values then  the types of individuals is decaying over generations in expectation, and the genealogical tree of the above system of particles is compact and provides a ssMt. While if $\kappa$ only takes positive values then the tree explodes locally, see \cite{BertoinJean2016Leis}. The case when $\kappa$ touches $0$ while remaining non-negative was left aside in \cite{BertoinJean2024SMta} and is the context of this paper:

\begin{Assump}[Criticality I]\label{Assumption A} Suppose that for  $\gamma\geq 0$ we have $\kappa(\gamma)\geq 0$ and there exists $\omega_-\geq 0$ such that $\kappa(\omega_-) = 0$ and $\kappa$ is twice differentiable at $\omega_-$. Furthermore, There exists $\gamma_1 > \omega_-$ such that $\psi(\gamma_1)<0$ and $\kappa(\gamma_1)<\infty$.
\end{Assump}

We shall have a stronger technical \cref{Assumption B} analogue to the Cram\'er assumption in \cite{BertoinJean2024SMta} needed to study fine properties of ssMt. This assumption is omitted in the introduction for readability.

\begin{figure}
    \centering
    \includegraphics[width=6cm]{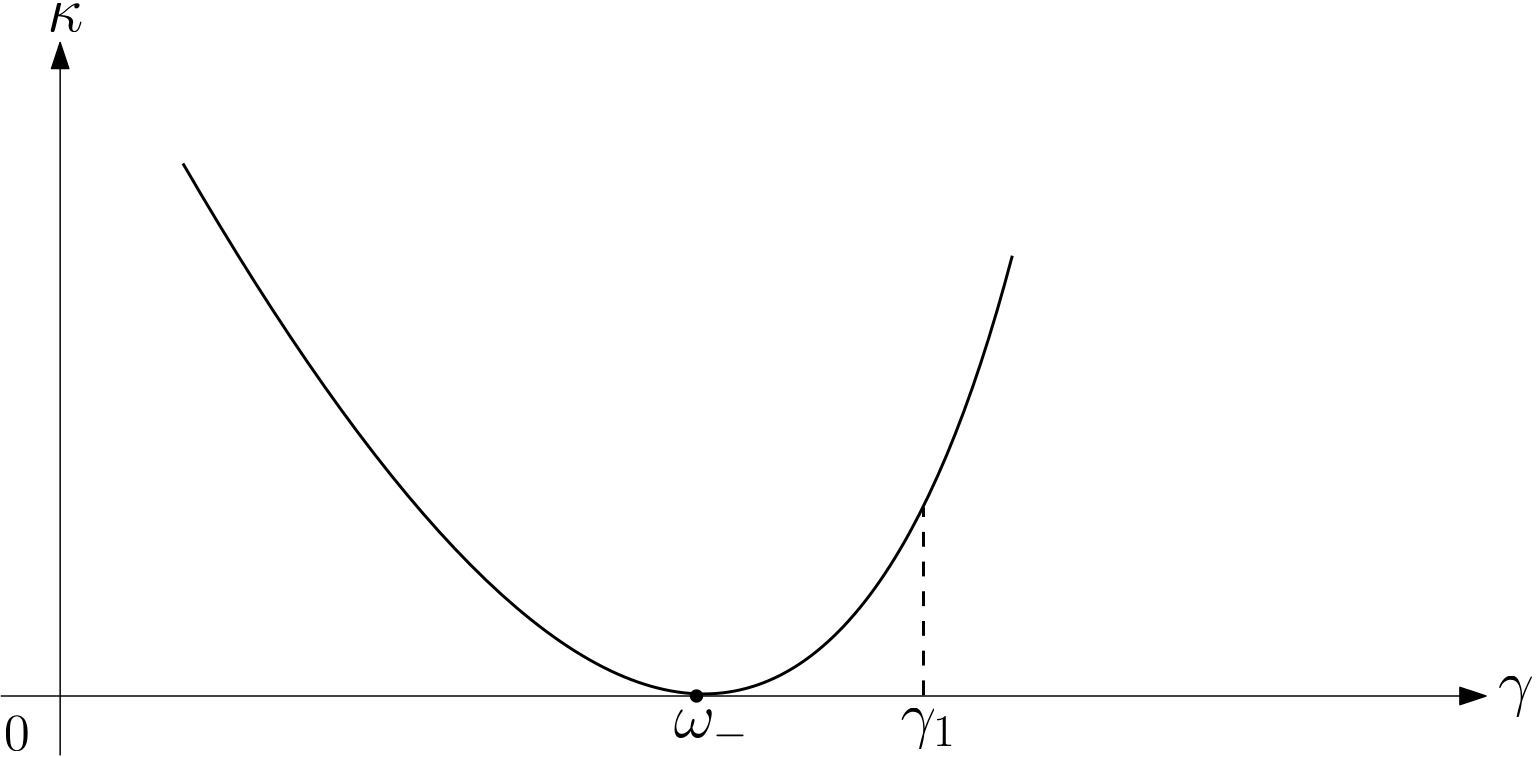}
    \caption{Illustration of the criticality on the cumulant function.
    }
    \label{fig:my_label}
\end{figure}

\begin{thm}[Construction of the critical ssMt]\label{thm: construct ssMt}
Under \cref{Assumption A}  the construction in \cite{BertoinJean2024SMta} can indeed be performed and it yields a family of laws $(\mathbb{Q}_x)$ of compact decorated random trees satisfying the Markov and self-similarity property. Furthermore, under \cref{Assumption B}, for any $x>0$ the Hausdorff dimension of the leaves $\partial T$ of a tree $T$ under $\mathbb{Q}_x$ is a.s.
$$ \mathrm{dim_{H}}(\partial T) = \frac{\omega_-}{\alpha}.$$
\end{thm}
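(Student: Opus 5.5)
We follow the construction of \cite{BertoinJean2024SMta}: the tree is built by gluing the trajectories of the self-similar Markov processes obtained by Lamperti-transforming the branching L\'evy process (BLP) with characteristics $(\sigma^2,\mathrm{a},\boldsymbol\Lambda)$ along the Ulam--Harris genealogy of individuals. The only place where $\kappa(\gamma)<0$ for some $\gamma$ is used in \cite{BertoinJean2024SMta} is to show that the decorations of generation $n$ decay uniformly fast, so that the gluing produces a compact tree. The Markov and self-similarity properties then follow from the branching and Lamperti structure exactly as before. So the plan is to replace this single input by a critical estimate and keep the rest of the construction verbatim.

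\textbf{Step 1: critical decay of generations.} View the logarithms of the birth decorations, indexed by generation, as a branching random walk $(V(u))_{|u|=n}$, with $\chi_n$ denoting the starting decorations of generation $n$. Since $\kappa(\omega_-)=0$ and $\kappa\ge 0$ on $\mathbb{R}_+$ with $\kappa$ twice differentiable there, $\omega_-$ is a minimum and hence $\kappa'(\omega_-)=0$. The walk $\omega_- V$ is therefore in the boundary case, provided the mean Biggins transform of the generational walk is related to $\kappa$ by the usual formula $m(\gamma)=1-\kappa(\gamma)/\psi(\gamma)$ wherever $\psi(\gamma)<0$. The assumption on $\gamma_1$ is what guarantees that $\psi<0$ on a neighbourhood of $\omega_-$. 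In the boundary case, the Aïdékon--Hu--Shi result \cite{AidekonElie2024Bodt} (together with the classical fact $\min_{|u|=n}V(u)\to\infty$) gives
\[
\max_{|u|=n}\chi_u\to 0 \quad \text{a.s.},
\]
with logarithmic-in-$n$ rate. Mere convergence to $0$ is not enough: compactness needs the \emph{sum over generations} of the heights contributed by each generation to be finite. The lifetime of an individual with decoration $x$ is of order $x^{\alpha}$ times an independent variable whose moments are controlled by $\psi(\gamma_1)<0$. We would therefore bound
\[
\sum_n \max_{|u|=n}\chi_u^{\alpha}\,\zeta_u .
\]
Since $\chi^\alpha$ only decays like $\exp(-c\log n)$ for the extremal particles, this sum is not obviously finite. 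This is the main obstacle. The first thing we would try is to use the derivative-martingale and first-passage description: for $\varepsilon>0$, cut the tree along the stopping line $\{u:\chi_u<\varepsilon\}$. By the boundary-case estimates of \cite{AidekonElie2024Bodt}, the number of stopped individuals is $O(\varepsilon^{-\omega_-}\log(1/\varepsilon))$, and each hangs a subtree of height $\varepsilon^{\alpha}H$. A union bound over $\varepsilon=2^{-k}$, using a tail bound for $H$ obtained by recursion from the same construction (a bootstrap on $\mathbb{Q}_1(\text{height}>t)$), gives total boundedness.

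\textbf{Step 2: Hausdorff dimension.} For the upper bound, cover $\partial T$ by the subtrees above the stopping line $\{\chi_u<\varepsilon\}$. Each has diameter $\lesssim\varepsilon^{\alpha}$ up to the height tail from Step 1, and their number is $\varepsilon^{-\omega_-+o(1)}$. Hence $\dim_H\le\omega_-/\alpha$. For the lower bound under \cref{Assumption B}, construct a measure on $\partial T$ from the critical derivative martingale $D_\infty=\lim \sum_{|u|=n}(-\log\chi_u^{\omega_-})\chi_u^{\omega_-}$, which is positive a.s.\ in the boundary case. This measure is the harmonic measure studied later. Use the spinal decomposition under the size-biased law: along the spine, the decoration at height $r$ from a typical leaf is $r^{1/\alpha+o(1)}$, and the mass of the ball of radius $r$ is $r^{\omega_-/\alpha+o(1)}$. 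The mass distribution principle (Frostman) then gives $\dim_H\ge\omega_-/\alpha$. The spine is a Lévy process conditioned to stay positive (a Bessel-type behaviour in the critical case), so the $o(1)$ corrections are only logarithmic and do not affect the exponent.
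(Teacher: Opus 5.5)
\textbf{The gap is in Step 1, and it carries over to the upper bound in Step 2.}

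You correctly identify the real difficulty: $\max_{|u|=n}\chi(u)\to 0$ is not enough, and one must control $\sup_{\bar u\in\partial\mathbb{U}}\sum_n z_{\bar u_n}$. But you do not resolve it. You invoke \cite{AidekonElie2024Bodt} for the decay of the maximal type and for counting estimates, whereas its main theorem (Theorem 1.3, boundedness of discounted tree sums) is exactly the missing step. The paper's proof is as follows. Since $\psi(\gamma_1)<0$ gives $\mathbb{E}[z_\varnothing^{\gamma_1/\alpha}]<\infty$, one couples $z_u\le \chi(u)^{\alpha}Y_u$ with $(Y_u)$ i.i.d. and $\mathbb{P}(Y>x)=Cx^{-\gamma_1/\alpha}$. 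The inequality $\gamma_1>\omega_-$ is precisely the tail condition under which that theorem yields the second condition in \eqref{eq: condition 1}.

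Your substitute is a union bound over the stopping lines $\mathcal{L}_\varepsilon=\{u:\chi(u)<\varepsilon\text{ for the first time}\}$, $\varepsilon=2^{-k}$, fed by a bootstrapped tail for $H=\mathrm{Height}(T)$. It is circular, since you do not know $H<\infty$. More seriously, it cannot close even in principle, because the critical case is exactly borderline:
\begin{itemize}
\item With probability of order $c^{-\omega_-}$ the decoration exceeds $c$. The first-moment bound of \cref{lem: Truncation} is of the right order here; this is a sharpness fact not proved in the paper.
\item By the Markov property at that point, the height is then at least $c^{\alpha}$ times an independent copy of $H$. So $\mathbb{Q}_1(H>s)$ decays no faster than $s^{-\omega_-/\alpha}$, and no bootstrap can do better.
\item Consequently $\mathbb{E}\big[\sum_{u\in\mathcal{L}_\varepsilon}\mathbb{Q}_1(H>t\chi(u)^{-\alpha})\big]$ is of order $t^{-\omega_-/\alpha}\,\mathbb{E}\big[\sum_{u\in\mathcal{L}_\varepsilon}\chi(u)^{\omega_-}\big]=t^{-\omega_-/\alpha}$ for every $\varepsilon$. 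There is no decay in $\varepsilon$, so nothing is summable over $k$.
\end{itemize}
Two further problems: the number of individuals in $\mathcal{L}_\varepsilon$ is in general infinite, so counts must be replaced by weighted sums; and the part of the tree before $\mathcal{L}_\varepsilon$, where lineages can linger above $\varepsilon$ for many generations, is not controlled.

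Your covering argument for the upper bound has the same defect. A first-moment bound on $\sum_u \mathrm{Diam}(T_u)^{\gamma/\alpha}$ needs $\mathbb{E}[H^{\gamma/\alpha}]<\infty$ for some $\gamma>\omega_-$, and this is false by the tail just described. The idea you are missing is the \emph{barrier}:
\begin{itemize}
\item The paper prunes the tree where $g$ exceeds $c$. By \cref{lem: Truncation}, this changes nothing with probability at least $1-c^{-\omega_-}$.
\item It proves $\mathbb{E}[\mathrm{Height}(T^c)^{\gamma/\alpha}]\le Cc^{\gamma-\omega_-}$ for $\gamma\in(\omega_-,\gamma_1)$ (\cref{lem: height truncated tree}). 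Each lineage is split according to the unit bands $(\mathrm{e}^{-k-1},\mathrm{e}^{-k}]$ visited by its types and the number $n$ of visits. The $z_u$ at the $n$-th visit is bounded through the stopping lines $\mathcal{L}^k_n$, the many-to-one formula, and a gambler's-ruin factor $(1-C/(k+b+2))^{n-1}$. That factor exists only because of the barrier; without it the oscillating spine walk returns to each band infinitely often.
\item It then covers $\partial T\cap T^c$ by the subtrees $T^c_u$, $u\in\mathbb{U}^c_n$, and uses $\mathbb{E}[\sum_{u\in\mathbb{U}^c_n}\chi(u)^{\omega_-}]=O(n^{-1/2})$ together with Borel--Cantelli along $n=2^k$.
\end{itemize}

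\textbf{Lower bound: a different route, left unproved.} Frostman's lemma for the harmonic measure via a spine is genuinely different from the paper's argument and plausible. In the critical case, however, you cannot size-bias by $D_\infty$ since $\mathbb{E}[D_\infty]=\infty$. You would need the truncated measure $\mu^c$, its spinal decomposition with the spine conditioned below the barrier, and ball-mass estimates that you only assert. The paper sidesteps all of this by adding a killing rate $\varepsilon$ (\cref{prop: approximate critical from subcritical}). This produces a subcritical subtree $T^{\varepsilon}\subset T$ with cumulant $\kappa-\varepsilon$, so $\dim_H\partial T\ge\omega_-^{\varepsilon}/\alpha$ by the subcritical theory (Assumption B supplies the Cram\'er condition), and $\omega_-^{\varepsilon}\uparrow\omega_-$.
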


Compared to \cite{BertoinJean2024SMta}, the construction of self-similar Markov trees in the critical case relies on the very recent result of A\"id\'ekon--Hu--Shi \cite{AidekonElie2024Bodt} which was in fact motivated by the above application. A few critical ssMt were already considered in \cite{BertoinJean2024SMta} but their existence was proved case-by-case using specific features of the models. This was notably the case for the ssMt of A\"id\'ekon and Da Silva \cite{aidekon2022growth} which arose from half-planar Brownian excursion. With the critical case at hand, the family of ssMt, which is now complete, is conjectured to describe all possible scaling limits of multi-type Bienaym\'e--Galton--Watson trees. We now move on to discuss the properties of critical self-similar Markov trees and their natural measures following the same strategy as for the subcritical case \cite{BertoinJean2024SMta}. 

\paragraph{Measures and spine decomposition.}
Any decorated tree $\mathtt{T}=(T,g)$ carries a natural Lebesgue measure $\lambda_{T}$ on its skeleton, and the length measures are obtained by using the decoration $g$ as density. Formally, for any $\gamma \geq 0$, we define the \textbf{$\gamma$-length measure} on $T$ as
$$\D \lambda^{\gamma}:=g^{\gamma-\alpha} \cdot \D \lambda_{T}.$$ In the subcritical case, those measures are defined as soon as $\gamma > \omega_-$ and Proposition 2.12 of \cite{BertoinJean2024SMta} even ensures that $\lambda^\gamma$ has finite expected mass when $\kappa(\gamma) <0$. In the critical case, we prove in \cref{sec: length measure} that $ \lambda^{\gamma}$ is still a finite measure when $\gamma>\omega_-$ a.s. but with \textit{infinite expected length}. Let us now turn to the equivalent of the harmonic measure of \cite[Chapter 2.3.3]{BertoinJean2024SMta}. Recall from Assumption \ref{Assumption A} that $\kappa(\omega_-)=0$. In this case, the  process
\[W_n = \sum_{|u| = n} (\chi(u))^{\omega_-},\] where $\chi(u)$ are the initial decorations of the individuals appearing in the genealogical tree (indexed by the Ulam's tree, see the construction of ssMt in \cref{sec: construction of the ssMts}) is a martingale. In the context of branching random walks, such a martingale is called an additive martingale. In the critical case, although positive, this martingale has a trivial limit  (see \cite[Theorem 3.3]{ShiZhan2016BRWE}), so the construction of the harmonic measure from \cite{BertoinJean2024SMta} needs to be adapted. The standard way to remedy this problem is to consider the \textbf{derivative martingale} defined by
\begin{equation}
D_n = -\sum_{|u| = n} (\chi(u))^{\omega_-}\log(\chi(u)).
\end{equation}
Standard results in the field of branching random walks (see \cite[Section 5]{ShiZhan2016BRWE}) ensures convergence of the derivative martingale towards a non-trivial positive limit ${D}_\infty$. This enables us to endow the ssMt $\mathtt{T}$ with a non-trivial measure $\mu$ of mass ${D}_\infty$ which plays the role of the harmonic measure in the subcritical case (we keep the same name in our context). Although not immediate from the definition, we will show that the harmonic measure $\mu$ is intrinsic, i.e. measurable with respect to the decorated tree only (as opposed to its genealogical representation). In fact $\mu$ can be obtained as a limit of the (intrinsic) length measures $\lambda^\gamma$ when $\gamma \downarrow \omega_-$
\[\lim_{\gamma \downarrow\omega_-} \frac{\kappa^{\prime\prime}(\omega_-)}{2}(\gamma-\omega_-)\lambda^{\gamma} = \mu, \]
at least along a subsequence as it was the case in the subcritical case \cite[Proposition 2.15]{BertoinJean2024SMta} (see \cref{thm: weak convergence length measure} for details). This convergence represents the most technical part of this work and requires delicate truncation estimates and fluctuation identities for L\'evy processes.

As in  \cite[Chapter 4 ]{BertoinJean2024SMta} those random measures are used to perform \textbf{spinal decomposition} of the underlying ssMt. The spinal decomposition originates in the setting of branching random walks and generalises to many different genealogical models, such as branching L\'evy processes and self-similar Markov trees. When doing so, we want to deal with the law of the decorated tree ${\tt T}$ together with a marked point $r$ sampled from the harmonic (or length) measure. The line segment between the root $\rho$ and the marked point $r$ is called the spine. The spinal decomposition theorem describes the law of the spine and the law of the subtrees dangling to the spine. In the critical case, the harmonic or length measures have infinite expected total mass, so we cannot directly bias the law of the tree by sampling a point according to those measures. However, this can be performed after a convenient cut-off on the decoration, see \cref{thm: spinal decomposition truncated}  in \cref{sec: spinal decomposition}. In particular, in contrast to \cite{BertoinJean2024SMta}, the L\'evy processes controlling the evolution of the decoration along the spine are now conditioned to stay below a barrier. The spinal decomposition is profound and has many applications.

The rest of the paper is organised as follows. In section 2, after a quick recap of the construction in \cite{BertoinJean2024SMta} we apply the results of \cite{AidekonElie2024Bodt} to construct critical self-similar Markov trees. We then import many results from the subcritical case using a "continuity argument" since critical ssMt can, after a slight perturbation, be transformed into a subcritical ssMt. This is used in particular to obtain a lower bound on their Hausdorff dimension, the spinal decomposition and characterisation of bifurcators. In section 3, we discuss the length and harmonic measures. A key idea imported from branching random walks is to consider the tree conditioned to stay below a barrier. In section 4, we present the spinal decomposition theorem with respect to the truncated harmonic measure.  Section 5, perhaps the most technical part of this paper, is devoted to analysing the relations between harmonic and lengths measures (\cref{thm: weak convergence length measure}).

\paragraph{Acknowledgments.} We thank Elie Aïdékon, Yueyun Hu, and Zhan Shi for stimulating discussions around \cite{AidekonElie2024Bodt} as well as Jean Bertoin and Armand Riera. The last two authors were supported by the China Scholarship Council. The first author is supported by "SuPerGRandMa", the ERC Consolidator Grant No 101087572.

\tableofcontents




\section{Background and construction of ssMt}
In this section we quickly recap the construction of decorated random trees by gluing decorated branches and prove the existence of critical self-similar Markov trees (\cref{thm: construct ssMt}). For more details, the reader is referred to  \cite[Chapters 1,2]{BertoinJean2024SMta}.

\subsection{Background} \label{sec: construction of the ssMts}

We first recall the definition of \textbf{characteristic quadruplet} $(\sigma^2, \mathrm{a}, \boldsymbol{\Lambda}; \alpha)$. Let $\mathcal{S} = [-\infty, \infty)\times \mathcal{S}_1$ where $\mathcal{S}_1$ is the set of non-increasing sequences $\Bd y = (y_1, y_2, \cdots)$ with $y_i\in [-\infty, \infty)$ and $\lim_{n\to\infty} y_n = -\infty$. We require that $\Bd\Lambda$ is a \textbf{generalised L\'evy measure} on the space $\mathcal{S}$, i.e., its projection $\Lambda_0$  to the first coordinate is a L\'evy measure and $\Bd\Lambda_1$ to the second coordinate satisfies
\begin{equation}\label{eq: generalised Levy measure} \Bd\Lambda_1(\{\mathrm{e}^{y_1}>\varepsilon\})<\infty, \quad \forall \varepsilon>0.
\end{equation}

The four entries have specific meanings in the construction. The tuple $(\sigma^2, \mathrm{a}, \Lambda_0)$ characterises a L\'evy process which is the input of the decoration. The measure $\Bd\Lambda_1$ is the intensity of the birth event. The parameter $\alpha>0$ is the \textbf{self-similarity index}. Given $(\sigma^2, \mathrm{a}, \boldsymbol{\Lambda}; \alpha)$, we first construct the law of decoration-reproduction process. Then the law of such process induces a \textit{family} of decorated branches indexed by the Ulam's tree. Finally, through a gluing procedure we build a decorated tree which is called the self-similar Markov tree with quadruplet $(\sigma^2, \mathrm{a}, \boldsymbol{\Lambda}; \alpha)$.

\bigskip

\noindent \textbf{Decoration-reproduction processes.} A \textbf{decoration-reproduction} process is a pair of processes $(f,\eta)$ where  $f$ is a non-negative right continuous with left limits (rcll) process on a line segment $[0, z]$ called the decoration process and where $\eta$ is a point process on $[0, z]$ called the reproduction process. Denote by $P_x$ the law of $(f, \eta)$ with $f(0) = x$.  We say that the family $(P_x)_{x\geq 0}$ is self-similar with index $\alpha>0$ if for each $x > 0$, the law  $P_x$ coincides with the rescaled pair $(f^{(x)},\eta^{(x)})= F^{\alpha}_x(f, \eta)$ under $P_1$ where
\begin{equation}\label{eq: scale f}
    f^{(x)}:[0, x^{\alpha}z]\to[0, \infty),\quad f^{(x)}(t) = xf(x^{-\alpha}t)
\end{equation}
\noindent and $\eta^{(x)}$ is the push-forward of $\eta$ under the map
\[
[0, z]\times(0, \infty)\to [0, x^{\alpha}z]\times(0,\infty), \quad (t, y)\mapsto (x^{\alpha}t, xy).
\]

We  construct such a family $(P_x)_{x>0}$ using characteristic quadruplet $(\sigma^2, \mathrm{a}, \boldsymbol{\Lambda}; \alpha)$ as follows: Let $\Bd{\mathbf{N}} = \Bd{\mathbf{N}}(\D t, \D y, \D \Bd y)$ be a Poisson random measure on $[0, \infty)\times \mathcal{S}$ with intensity $\D t\Bd\Lambda(\D y, \D \Bd y)$. Denote by $N_0(\D t, \D y)$ its projection to the first and the second coordinates and by  $\Bd{\mathbf{N}}_1 = \Bd{\mathbf{N}}_1(\D t, \D \Bd y)$ its projection to the first and the third coordinates. Let $B$ be a standard Brownian motion starting from $0$ independent of $\Bd{\mathbf{N}}$. Construct a process $(\xi(t))_{t\geq 0}$ by
\[\xi(t) = \sigma B(t) + \mathrm{a}t + \int_{[0, t]\times\mathbb{R}} y\mathds{1}_{\{|y|>1\}}N_0(\D t, \D y) + \int_{[0, t]\times \mathbb{R}} y\mathds{1}_{\{|y|\leq 1\}}N^c_0(\D t, \D y). \]
\noindent The compensate point process is defined by $N^{c}_0(\D t, \D y) : = N_0(\D t, \D y) - \D t \Lambda_0(\D y)$. The process $(\xi(t))_{t\geq 0}$ is a L\'evy process with life-time $\zeta := \inf\{t>0:\xi(t) = -\infty\}$. We may consider the L\'evy process $\xi$ started from $b\in \mathbb{R}$ by adding a constant $b$ to the display above. Define its Laplace exponent $\psi(\gamma)$ by the following equation \footnote{Such expectation could be infinite in general. The condition that $\psi(\gamma)<\infty$ is included in the subcritical and critical condition in the next section. }
\begin{equation}\label{eq: Laplace exponent}
E[\exp(\gamma \xi(t))] = E[\exp(\gamma \xi(t))\cdot\mathds{1}_{\{t<\zeta\}}]=\exp(t\psi(\gamma)), \quad \gamma>0.
\end{equation}

\noindent By the L\'evy Khintchine formula, we have ($\mathbb R^*:=\mathbb R \cup \{-\infty\}$)
\begin{align}\label{eq: def psi}
    \psi(\gamma) = \frac{1}{2}\sigma^2\gamma^2 + \mathrm{a}\gamma + \int_{\mathbb{R}^*}(\mathrm{e}^{\gamma y} - 1 - \gamma y\mathds{1}_{|y|\leq 1})\Lambda_0(\D y).
\end{align}

We next apply the Lamperti transformation to the process $(\xi(t))_{t\ge 0}$. Consider the time change
\begin{align*}
     \epsilon(t):=\int_0^t \exp(\alpha\xi(s))\D s \quad\text{ for }0\le t\le \zeta, \quad z=\epsilon(\zeta-)=\int_0^{\zeta} \exp(\alpha\xi(s))\D s.
\end{align*}
The (random) function $\epsilon(t):[0,\zeta)\to [0,z)$ is a bijection a.s.. Write $\tau$ as the reciprocal bijection so that a.s. $\int_{0}^{\tau(t)}\exp(\alpha \xi(s))\D s=t$. By the Lamperti transformation the process $(X(t):=\exp (\xi(\tau(t))))_{t\in[0, z)}$ is a positive self-similar Markov process (pssMp) starting from $1$ with scaling exponent $\alpha$. Specifically, for each $x>0$, the scaled process $(xX(x^{-\alpha}t))_{t\in [0, x^\alpha z)}$ has the same law as $X$ starting from $x$. The point $0$ serves as a cemetery point for $X$. We always set $X(z):=0$ when $z<\infty$ such that $X$ is a rcll process.

We define the reproduction process $\eta$ using the other projection $\mathbf{N}_1(\D t, \D\Bd y)$. We rewrite each atom $(s,\boldsymbol{y})$ of $\bf N_1$ (possibly repeated according to their multiplicities), as pairs $(s, y_{\ell})_{\ell\ge 1}$ when $\boldsymbol{y}\neq (-\infty,-\infty,\dots)$. Recall the exponential functional $\epsilon(t)$. Set
\begin{equation}\label{eq: def eta}
    \eta:=\sum \mathds{1}_{\{ \epsilon(s)\le z\}} \delta_{(\epsilon(s),\exp (\xi(s-)+y_l))},
\end{equation}
where $\delta_{(t,x)}$ is the Dirac measure at $(t,x)$ and the sum is taken over all the pairs $(s, y_{\ell})_{\ell\ge 1}$. Denote the law of $(X, \eta)$ by $P_x$ if $X$ starts from $x$. We see that the family $(P_x)_{x>0}$ is self-similar with parameter $\alpha$. For brevity, we write $ P$ for $ P_1$.

\bigskip

\noindent \textbf{Decorated-reproduction family.} Let $\mathbb{U} = \bigcup_{n\geq 0} (\mathbb{N}^*)^n$ be the Ulam tree with the convention $(\mathbb{N}^*)^0 = \{\varnothing\}$. Set $\mathbb{U}^* = \mathbb{U}\backslash \{\varnothing\}$. The leaves of the Ulam tree are denoted by $\partial\mathbb{U} := \mathbb{N}^{\mathbb{N}}$. We write $|u| = n$ (resp. $|u| = \infty$) if $u\in(\mathbb{N}^*)^n$ (resp. $u\in\partial\mathbb{U}$) for the generation of $u$, $u-$ for the parent of $u$, $uv$ for concatenation of $u$ and $v$, and $u_k$ for the ancestor of $u$ at generation $k$.

With the family $(P_x)_{x>0}$, we construct the decorated-reproduction family, which is a random family of decoration-reproduction processes $(f_u,\eta_u)_{u\in \mathbb{U}}$. It is a particle system where each $u$ is an individual (possibly fictitious) and $(f_u, \eta_u)$ describes the trait and birth event along its life. Each particle $u$ is assigned a type $\chi(u)$. By convention, we set $\chi(u) = 0$ if an individual is fictitious. At generation $0$, there is an individual with type $\chi(\varnothing) = x>0$. We then sample $(f_{\varnothing}, \eta_{\varnothing})$ under $P_x$. We enumerate the atoms of $\eta_\varnothing$ by $(t_1,y_1),(t_2,y_2),\dots$, and complete the sequence with fictitious individuals to get an infinite sequence. The individuals in the first generation have types $\chi(i) = y_i$. Inductively, for each $u\in \mathbb{U}$ with type $\chi(u)$, sample $(f_u,\eta_u)$ under the law $P_{\chi(u)}$ independently. We then set $\chi(ui)=y_{ui}$ for $i\in \mathbb{N}$ as the types of children of $u$, where $ ( t_{u1}, y_{u1}),(t_{u2}, y_{u2}),\dots$ are the atoms of $\eta_{\chi(u)}$. We repeat this procedure to obtain the next generation by using independent decoration-reproduction processes for different individuals. We denote by $\mathbb{P}_x$ the law of the family of decoration-reproduction processes $(f_u,\eta_u)_{u\in \mathbb{U}}$ when the ancestor $\varnothing$ has the type $x$ and $\mathbb{E}_x$ the corresponding expectation. For brevity, we write $\mathbb P$ for $\mathbb P_1$ and $\mathbb E$ for $\mathbb E_1$.

\bigskip

\noindent \textbf{Construct trees by gluing branches.}
Let $(f_u, \eta_u)_{u\in\mathbb{U}}$ be a family of decoration-reproduction processes. For each individual $u\in \mathbb{U}$, we view the decoration process $(f_u, \eta_u)$ as a decorated branch $([0, z_u], d, 0, f_u)$ with marks $t_{ui}$ (recall that $(t_{ui}, y_{ui})$ are atoms of $\eta_u$ and $z_u$ the lifetime of $f_u$). The distance $d$ is the Euclidean distance. Let ${\tt T}_0 = ([0, z_\varnothing], d, 0, f_\varnothing)$. Inductively, we let ${\tt T}_{n+1}$ be the decorated trees obtained from gluing the branches $([0, z_u], d, 0, f_u)$ with $|u| = n+1$ onto ${\tt T}_n$ at the corresponding $t_{u}$ lying in $T_n$. The gluing operation leads to a limiting metric space $(T, d_T, \rho)$ with a decoration function $g$ as $n\to\infty$. For each point $x$ in $T$, we set $g(x) = \sup f_u(x_u)$ where the supremum is taken over $x_u$ in the branch indexed by $u\in \mathbb U$ identified to $x$. The following lemma provides a sufficient condition for the space $(T, d_T, \rho,g)$ to be a compact decorated real tree (see also \cite[Section 2]{senizergues2020growing}). Define the norm $||f_u||:=z_u+\sup_{0\le t\le z_u}f_u(t)$. We say that $(x_i, i\in I)$ is a \textbf{null family} if for $\varepsilon>0$, there are only finitely many $x_i>\varepsilon$.

\begin{lem}[{\cite[Lemma 1.5]{BertoinJean2024SMta}} Compactly glueable]\label{thm: gluing building blocks}
Suppose that the family $(f_u, \eta_u)$ satisfies
\begin{equation}\label{eq: condition 1}
(||f_u||, u\in\mathbb{U}) \quad \mbox{is a null family}
 \qquad \mbox{and}  \qquad
\lim_{k\to\infty} \sup_{\bar{u}\in\partial\mathbb{U}} \sum^{\infty}_{n=k} z_{\bar{u}(k)} = 0.
\end{equation}
Then $(T, d_T, \rho, g)$ is a compact decorated real tree.
\end{lem}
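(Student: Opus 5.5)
The plan is to realise $T$ as the closure of the increasing union $\bigcup_n T_n$ inside a common ambient space, and then derive compactness from total boundedness. The two halves of \eqref{eq: condition 1} play distinct roles. The null-family condition makes each finite-generation tree $T_n$ compact and controls the decoration. The uniform tail condition (read as $\lim_{k}\sup_{\bar u\in\partial\mathbb U}\sum_{n\ge k} z_{\bar u(n)}=0$) makes $T_k$ uniformly close to $T$ in the Hausdorff sense.

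\textbf{Step 1: each $T_n$ is a compact real tree.} I argue by induction on $n$, with $T_0=[0,z_\varnothing]$ compact. The tree $T_{n+1}$ is obtained from $T_n$ by gluing the segments $[0,z_u]$, $|u|=n+1$, at the points $t_u\in T_n$. Gluing segments onto a real tree at single points again gives a real tree, with the length metric. For compactness, take $\varepsilon>0$. Since $(\|f_u\|)$ is a null family, only finitely many of the glued branches have $z_u>\varepsilon$. Cover $T_n$ by finitely many $\varepsilon$-balls. Every short branch lies in the $\varepsilon$-neighbourhood of its gluing point, hence in the $2\varepsilon$-balls with the same centres. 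Each of the finitely many long branches is covered by finitely many $\varepsilon$-balls. Hence $T_{n+1}$ is totally bounded. It is also complete, being a finite union of compact pieces together with branches that accumulate only onto the compact set $T_n$. So $T_{n+1}$ is compact.

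\textbf{Step 2: uniform approximation of $T$ by $T_k$.} Let $T$ be the completion of $\bigcup_n T_n$. Any point $x\in T_m$ with $m>k$ lies on a branch $u$ with $|u|=m$. Following the ancestral line $u_k,u_{k+1},\dots,u$ back to $T_k$ gives
\[
d_T(x,T_k)\le \sum_{n=k+1}^{m} z_{u_n}\le \delta_k:=\sup_{\bar u\in\partial\mathbb U}\sum_{n\ge k} z_{\bar u(n)},
\]
where we extend $u$ arbitrarily to a leaf $\bar u$. By density, every point of the completion satisfies $d_T(x,T_k)\le\delta_k$. Since $\delta_k\to0$, a finite $\varepsilon$-net of the compact set $T_k$ (with $\delta_k<\varepsilon$) is a $2\varepsilon$-net of $T$. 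So $T$ is totally bounded and complete, hence compact.

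\textbf{Step 3: real tree and decoration.} $T$ is a real tree because a Gromov--Hausdorff (here even Hausdorff) limit of compact real trees is a compact real tree, equivalently because the four-point condition passes to the closure while connectedness is preserved. It remains to show that $g$ is upper semicontinuous, i.e. $\limsup_{y\to x}g(y)\le g(x)$. Fix $\varepsilon>0$. By the null-family property only finitely many branches have $\sup f_u>\varepsilon$, and on each of these $f_u$ is rcll. Taking the supremum over the identified representatives, as in the definition of $g$, handles the left limits at jump times and at gluing points. Points $y$ close to $x$ lying on other branches then have $g(y)\le\varepsilon$ or lie on one of the finitely many large branches, which gives upper semicontinuity.

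The main obstacle I expect is the bookkeeping in Step 3 together with the completion in Step 2. The leaves in $T\setminus\bigcup_n T_n$ must correspond to limits along rays $\bar u\in\partial\mathbb U$ (or to accumulation points of shrinking branches), and one must check that assigning them decoration $0$ (or the appropriate supremum) keeps $g$ upper semicontinuous. For this I would again use the null-family property: along any ray, $\sup f_{\bar u(n)}\to0$.
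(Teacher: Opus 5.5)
The paper gives no proof of this lemma; it imports it from \cite[Lemma 1.5]{BertoinJean2024SMta}, so your argument has to stand on its own. Steps 1 and 2 are sound. The null-family property makes each $T_n$ totally bounded, and the ancestral-line bound $d_T(x,T_k)\le\delta_k$ passes to the completion, which gives compactness. The real-tree property of the closure is also fine. You read the tail condition correctly as $\sum_{n\ge k} z_{\bar u(n)}$; the displayed $z_{\bar u(k)}$ is a typo.

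The gap is the upper semicontinuity of $g$ in Step 3. Your claim that taking the supremum over identified representatives ``handles the left limits at jump times'' is false. A point $(u,t)$ of branch $u$ has other representatives only if children are grafted at $t$. Those children contribute $f_u(t-)\mathrm{e}^{y_{ui}}$, not $f_u(t-)$. Suppose $f_u$ jumps down at $t$ (a jump of $\xi$ with $y_0<0$) and either no child is born at $t$ or only children with $y_{ui}<0$. Then $g((u,t))<f_u(t-)=\lim_{s\uparrow t}g((u,s))$, so $g$ is not usc at $(u,t)$. This happens a.s.\ as soon as $\Lambda_0$ charges $(-\infty,0)$. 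In the A\"id\'ekon--Da Silva example, for instance, each downward jump $f(t-)\mapsto xf(t-)$ carries a single child of decoration $(1-x)f(t-)$, and $x\vee(1-x)<1$. So with $g$ built literally from the rcll $f_u$, no argument can give usc.

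The remedy is to give each branch its usc regularisation $f_u^{\ast}(t)=f_u(t)\vee f_u(t-)$, with $f_u(0-)=f_u(0)$. Take $g$ to be the supremum of $f_u^{\ast}$ over representatives, and set $g=0$ on $T\setminus\bigcup_n T_n$. Since $\sup f_u^{\ast}=\sup f_u$, the null-family hypothesis is unchanged. Your finite-large-branch idea then works once written out:
\begin{itemize}
\item Fix $x\in T$ and $\varepsilon>0$, and let $F_\varepsilon$ be the finite set of $u$ with $\sup f_u>\varepsilon$.
\item Representatives on branches outside $F_\varepsilon$ contribute at most $\varepsilon$.
\item A branch of $F_\varepsilon$ not containing $x$ is a compact segment at positive distance from $x$.
\item On a branch $u\in F_\varepsilon$ containing $x$ at position $s_0$, nearby points are $(u,s)$ with $|s-s_0|=d_T(x,y)$, and $\limsup_{s\to s_0}f_u^{\ast}(s)\le f_u^{\ast}(s_0)\le g(x)$.
\end{itemize}
Hence $\limsup_{y\to x}g(y)\le g(x)\vee\varepsilon$. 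The same argument covers the points of $T\setminus\bigcup_n T_n$, because they lie outside the compact union of the branches in $F_\varepsilon$. By contrast, the fact that $\sup f_{\bar u(n)}\to 0$ along a single ray, which you propose to use there, would not suffice: points close to such a leaf need not lie on that ray.
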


\noindent \textbf{Topology of the space of (measured) decorated trees}.
\label{sec: topology} We close this section with a brief note on the topology on the space of decorated compact real tree, see \cite[Section 1.4]{BertoinJean2024SMta}. We say $\mathtt{T}:=(T,d_T,\rho,g) $ and $\mathtt{T}^\prime:=(T^\prime,d_T^\prime,\rho^\prime,g^\prime)$ are isomorphic if there exists a bijective isometry $\phi: (T,d_T)\to (T^\prime,d_T^\prime)$ such that $\phi(\rho)=\rho^\prime$ and  $g^\prime=g\circ \phi^{-1}$. Denote by $\mathbb T$ (resp. $\mathbb{T}^\bullet$ or $\mathbb{T}_m$) the space of equivalence (up to isomorphisms) classes of  decorated compact real trees (resp. with an additional point, or a finite Borel measure).  We always abuse $\mathtt{T}=(T,d_T,\rho,g)$ or $\mathtt{T}^\bullet=(T,d_T,\rho,g,r)$ to represent an equivalence class in the sequel (where $r$ is an additional point), since the quantities and properties we will consider are invariant under isomorphisms.  These spaces are endowed with $ d_{\mathbb{T}}, d_{\mathbb{T}^\bullet}, d_{\mathbb{T}_m}$, which are adaptations of the Gromov-Hausdorff-pointed/Prokhorov distances that also take the decorations into account as hypographs. They are all Polish spaces. See \cite[Section 1.4]{BertoinJean2024SMta} for details.

\subsection{Critical self-similar Markov trees}

For a generic choice of quadruplet $(\sigma^2, \mathrm{a}, \Bd\Lambda; \alpha)$, the decorated-reproduction family $(f_u,\eta_u)_{u\in \mathbb{U}}$ might not be compactly glueable. We will see that the cumulant function $\kappa(\gamma)$ defined in \eqref{def: kappa} plays an important role in that respect. We also frequently use the moment generating function of the types $\mathcal{M}$, which is defined by
\begin{equation}\label{eq: Merlin transform}
\mathcal{M}(\gamma) := \mathbb{E}_1\bigg[\sum^{\infty}_{i=1}(\chi(i))^{\gamma}\bigg] = 1 -\frac{\kappa(\gamma)}{\psi(\gamma)}, \quad \mbox{for } \psi(\gamma)<\infty.
\end{equation}
The last equation is from \cite[Lemma 3.8]{BertoinJean2024SMta}. The function $\kappa$ and $\mathcal{M}$ are both convex by definition. In \cite{BertoinJean2024SMta} the authors work under the subcritical condition that $\inf_{\gamma}\kappa(\gamma)<0$. Under this assumption, $(f_u, \eta_u)_{u\in\mathbb{U}}$ is $\mathbb{P}_x$-a.s. compactly glueable. The resulting decorated tree is called in these pages a subcritical self-similar Markov tree associated with quadruplet $(\sigma^2, \mathrm{a}, \Bd\Lambda; \alpha)$. In this paper, we prove that when $\inf_{\gamma} \kappa(\gamma)=0$, the \textbf{critical case}, under \cref{Assumption A} or the more exotic \cref{Assumption A'} below, the construction can still be performed:

\begin{Assump}[Criticality II]\label{Assumption A'}
Suppose that for $\gamma\geq 0$ we have $\kappa(\gamma)\geq 0$ and there exists $\omega_-\geq 0$ such that $\kappa(\omega_-) = 0$ and $\kappa'(\omega_-)<0$. Furthermore, There exists $\gamma_1 > \omega_-$ such that $\psi(\gamma_1)<0$.
\end{Assump}

\begin{prop}[Existence of critical ssMt]\label{prop: construct ssMt}
With the notation above, under \cref{Assumption A} or \cref{Assumption A'}, the decorated branches $(f_u, \eta_u)_{u\in\mathbb{U}}$ under $\mathbb{P}_x$ are a.s. compactly glueable in the sense of \cite[Definition 1.4]{BertoinJean2024SMta} and provides a decorated real tree whose law is denoted by $\mathbb{Q}_x$.
\end{prop}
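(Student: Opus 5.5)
We verify the two conditions in \eqref{eq: condition 1} of \cref{thm: gluing building blocks}. Consider the branching random walk $V(u) := -\log \chi(u)$ on the Ulam tree; its log-Laplace transform at $\omega_-$ is $\log \mathcal{M}(\omega_-)$. By \eqref{eq: Merlin transform} and $\kappa(\omega_-)=0$ this vanishes, since $\psi(\omega_-)<0$. That sign follows from convexity of $\psi$: $\psi(0)\le 0$, $\psi(\gamma_1)<0$, and $\omega_-\le\gamma_1$ is either in between or we use $\kappa\ge\psi$. So we have $\mathcal{M}(\omega_-)=1$.

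Under \cref{Assumption A}, criticality $\kappa\ge 0$ together with $\kappa(\omega_-)=0$ gives $\kappa'(\omega_-)=0$. Differentiating $\mathcal{M}=1-\kappa/\psi$ then shows $\mathcal{M}'(\omega_-)=0$. The BRW $V$ is therefore in the \emph{boundary case} after the standard change $\tilde V = \omega_- V$, with finite second moment coming from twice differentiability. Under \cref{Assumption A'}, $\kappa'(\omega_-)<0$ gives $\mathcal{M}'(\omega_-)>0$. Since $\kappa\ge0$ on $[0,\infty)$ while $\kappa$ is decreasing at $\omega_-$, this can only happen if $\omega_-$ is the right endpoint where $\kappa$ is finite, which is the "exotic" non-boundary regime.

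\textbf{Null family.} First I show $\sup_{|u|=n}\chi(u)\to 0$ a.s. and that $(\chi(u))_{u}$ is a null family. In the boundary case this is the classical fact $\min_{|u|=n} V(u)\to\infty$ a.s. (indeed $\sim\frac32\log n$; Shi's book). Alternatively, the a.s. convergence of the additive martingale $W_n$ to $0$ gives $\sup_{|u|=n}\chi(u)^{\omega_-}\le W_n\to0$ when $\omega_->0$. Finiteness of $\#\{u:\chi(u)>\varepsilon\}$ then needs more than generation-wise decay: use the derivative martingale convergence / the Aïdékon--Hu--Shi tail estimates to show $\sum_u \mathbf 1_{\{\chi(u)>\varepsilon\}}<\infty$. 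Given this, $\|f_u\|$ is controlled via self-similarity: $\|f_u\|$ is distributed as $\chi(u)^{\alpha}z + \chi(u)\sup X$ under $P$. A Borel--Cantelli argument over individuals with $\chi(u)\in(2^{-k-1},2^{-k}]$ then yields the null family property. This uses moments of $z$ and $\sup X$ of order $\gamma_1/\alpha$, available since $\psi(\gamma_1)<0$ (cf.\ the subcritical arguments in \cite{BertoinJean2024SMta}). Here one needs the number of individuals in each dyadic type band to grow at most polynomially, which again comes from the boundary-case BRW estimates.

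\textbf{Uniform height.} The main obstacle is the second condition, $\sup_{\bar u}\sum_{n\ge k} z_{\bar u(n)}\to 0$. The subcritical proof bounds heights by $\sum_{|u|\ge k}\chi(u)^{\gamma}$ with $\kappa(\gamma)<0$, which is now unavailable: the expected sum diverges for every $\gamma$. This is exactly where I invoke \cite{AidekonElie2024Bodt}. Their result bounds the "maximal total displacement along a ray" $\sup_{\bar u}\sum_{n} \mathrm{e}^{-\beta V(\bar u(n))}$ (with i.i.d. marks) for boundary-case BRW, and shows it is a.s. finite. I would apply it to $\beta=\alpha$ with marks $z_u\chi(u)^{-\alpha}$, which are i.i.d. with law $z$ under $P$ and have the required moments. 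Finiteness for the subtree rooted at every $u$, uniformly, then gives tail convergence to $0$. For \cref{Assumption A'}, I would instead perturb: apply a random thinning or exponential tilt producing a subcritical comparison, or directly use $\mathcal{M}'(\omega_-)>0$ to get linear speed of $\min V$, whence the rays' sums decay geometrically.

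Once both conditions hold, \cref{thm: gluing building blocks} gives a compact decorated tree, and we define its law as $\mathbb{Q}_x$.
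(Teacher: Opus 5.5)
\noindent\textbf{Gap 1: the case of \cref{Assumption A'}.} This is where your proposal breaks down. First, the sign is reversed. Since $\kappa(\omega_-)=0$, differentiating $\mathcal M=1-\kappa/\psi$ gives $\mathcal M'(\omega_-)=-\kappa'(\omega_-)/\psi(\omega_-)<0$, because both $\kappa'(\omega_-)$ and $\psi(\omega_-)$ are negative. So in this regime the spine walk $\hat{\mathtt S}$ has positive drift. Moreover $W_n$ need not vanish, since Biggins' criterion $\omega_-(\log\mathcal M)'(\omega_-)<\log\mathcal M(\omega_-)=0$ holds. Hence your alternative $\sup_{|u|=n}\chi(u)^{\omega_-}\le W_n\to0$ is not available.

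More seriously, neither mechanism you propose yields the uniform height condition.
\begin{itemize}
\item A subcritical tree obtained by thinning, killing or lowering the reproduction is dominated by $\mathtt T$; this is exactly the monotonicity behind \cref{prop: approximate critical from subcritical}. Its compactness therefore says nothing about $\mathtt T$. Conversely, any tree whose types dominate those of $\mathtt T$ has $\mathcal M\ge 1$ everywhere, so it is not subcritical.
\item There is no linear speed. By Biggins' theorem, $\frac1n\min_{|u|=n}(-\log\chi(u))\to-\inf_{t>0}t^{-1}\log\mathcal M(t)$. Here $\log\mathcal M\ge 0$ wherever it is finite (because $\kappa\ge0>\psi$ on $(0,\gamma_1]$ and $\mathcal M=\infty$ on $(\omega_-,\infty)$), with equality at $\omega_-$. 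So the speed is $0$: the minimum is driven by the heavy tails that make $\kappa$ infinite beyond $\omega_-$, and rays need not decay geometrically.
\end{itemize}
The paper does not split into cases. It applies \cite[Theorem 1.3]{AidekonElie2024Bodt} under either assumption, the only input being $\mathbb P(z_\varnothing>x)\le Cx^{-\gamma_1/\alpha}$ with $\gamma_1>\omega_-$; the coupling $z_u\le\chi(u)^\alpha Y_u$ with i.i.d.\ Pareto $Y_u$ avoids needing the tail-exponent limit to exist. It obtains $\sup_{|u|=n}\chi(u)\to0$ from \cite[Lemma 3.1]{ShiZhan2016BRWE}, which covers both regimes.

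\noindent\textbf{Gap 2: two problems that affect \cref{Assumption A} as well.}

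For the null family, your dyadic-band Borel--Cantelli fails as stated. By the many-to-one formula, the expected number of $u$ with $\chi(u)\in(2^{-k-1},2^{-k}]$ is at least $2^{k\omega_-}\sum_n\mathtt P(\hat{\mathtt S}_n\in[k\log2,(k+1)\log2))=\infty$, since $\hat{\mathtt S}$ is centred. Even after restricting to individuals whose ancestral types stay below a barrier $c$, the count is of order $2^{k\omega_-}$ times a polynomial, not polynomial. A workable version bounds $\mathbb E\sum_{u\in\mathbb U^c}\chi(u)^{\gamma}<\infty$ for $\gamma\in(\omega_-,\gamma_1)$ and then removes the barrier with \cref{lem: Truncation}.

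The paper's route is simpler. The identity $\mathbb E\sum_{|u|\le n}\sup f_u^{\omega_-}=(n+1)\mathbb E[\sup f_\varnothing^{\omega_-}]$ gives finiteness in each generation. The comparison \eqref{eq: type and decoration} then transfers $\sup_{|u|\ge n}\chi(u)\to0$ to the decorations, with no band counting. Contrary to your remark, generation-wise decay plus per-generation finiteness already gives $\#\{u:\chi(u)>\varepsilon\}<\infty$.

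For the height condition, a.s.\ finiteness of $\sup_{\bar u}\sum_n$ does not by itself give $\lim_k\sup_{\bar u}\sum_{n\ge k}=0$. The tail equals $\sup_{|u|=k}\chi(u)^\alpha H_u$, a supremum over all $u$ of generation $k$ of i.i.d.\ copies $H_u$. The natural first-moment bound $\mathbb E\sum_{|u|=k}\chi(u)^\gamma=\mathcal M(\gamma)^k$ does not decay. So the word ``uniformly'' hides the whole difficulty. You should cite the vanishing-tail statement of \cite[Theorem 1.3]{AidekonElie2024Bodt} directly, as the paper does.
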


\begin{rem} We prove below \cref{prop: construct ssMt} using the main result of \cite{AidekonElie2024Bodt}. We shall however need to go over the proof of \cite{AidekonElie2024Bodt} later to gather estimates needed for the upper bound of Hausdorff dimension in \cref{lem: height truncated tree}. The decorated trees under law $\mathbb{Q}_x$ may still be called ssMt (with characteristics $(\sigma^2, \mathrm{a}, \boldsymbol\Lambda ; \alpha))$: the self-similarity is inherited from that of the decorated-reproduction family, and the Markov properties stated in \cite[Chapter 4]{BertoinJean2024SMta} generalise to the critical self-similar Markov trees by applying the same arguments.
\end{rem}
\begin{proof}
The proof of \cref{thm: construct ssMt} reduces to checking the two conditions \eqref{eq: condition 1} of \cref{thm: gluing building blocks}. The second condition is a direct consequence of \cite[Theorem 1.3]{AidekonElie2024Bodt} given that the following limit exists and satisfies
\[\lim_{x>0} \frac{-\log\mathbb{P}(z_{\varnothing}>x)}{\log x} \geq \frac{\gamma_1}{\alpha} > \frac{\omega_-}{\alpha}. \]

\noindent The existence of $\gamma_1>0$ with $\psi(\gamma_1) < 0$ ensures that $\mathbb{E}[z_{\varnothing}^{\gamma_1/\alpha}] < \infty$ by \cite[Lemma 9.1]{BertoinJean2024SMta}, which implies $\mathbb{P}(z_{\varnothing}>x)\leq Cx^{-\gamma_1/\alpha}$. Without assuming the limit exists, we could couple $(z_{u}, u\in\mathbb{U})$ with i.i.d random variable $(Y_u, u\in\mathbb{U})$ such that $z_u \leq (\chi(u))^{\alpha}\cdot Y_u$ almost surely and $\mathbb{P}(Y>x) = Cx^{-\gamma_1/\alpha}$. Then by \cite[Theorem 1.3]{AidekonElie2024Bodt}, the second condition of \eqref{eq: condition 1} holds with $z_{u(n)}$ replaced by $(\chi(u))^{\alpha}\cdot Y_u$.

The first condition in  \eqref{eq: condition 1} results from the decaying of the types over generations. We simplify $\sup_{0\le t\le z_u} f_u(t)$ as $\sup f_u$. By successively applying self-similarity, \eqref{eq: Merlin transform}, and \cite[Lemma 9.1]{BertoinJean2024SMta},
\begin{align*}
     \mathbb{E}_1\left[ \sum_{|u|\le n}\sup f_{u}^{\omega_-}\right]= \mathbb{E}_1\left[ \sum_{|u|\le n}\chi(u)^{\omega_-} \right]\mathbb{E}_1[\sup f_{\varnothing}^{\omega_-}]=(n+1)\mathbb{E}_1[\sup f_{\varnothing}^{\omega_-}]<\infty.
\end{align*}
It follows that there are only finite many $|u|\le n$ with $\sup f_u > \varepsilon $ for each $n\in \mathbb{N}$. For this $\varepsilon > 0$, there exist $\delta > 0$ and $C>0$, such that $\mathbb{P}_x(\chi(1)>\delta) > C$ for $x>\varepsilon$. By Markov property of the decoration-reproduction process, for each $u\in \mathbb U$ with $\sup f_u > \varepsilon$, with probability greater than $C$, one offspring of $u$ has type greater than $\delta$. Therefore, we have
\begin{equation}\label{eq: type and decoration}
      \mathbb{P}\left(\sup_{|u|\ge n+1}\chi(u)>\delta\right)\ge C \mathbb{P}\left(\sup_{|u|\ge n}\sup f_u>\varepsilon\right).
\end{equation}
With \cref{Assumption A} or \cref{Assumption A'}, we have $\lim_{n\to\infty}\sup_{|u| = n} \chi(u) = 0$ a.s. from  \cite[Lemma 3.1]{ShiZhan2016BRWE}. Together with \eqref{eq: type and decoration}, it implies that $\lim_{n\to \infty}\mathbb{P}(\sup_{|u|\ge n} \sup f_u>\varepsilon)=0$. We conclude that $(\sup f_u,u\in \mathbb{U})$ is a null family. In the same way, we get $(z_u,u\in \mathbb{U})$ is a null family. Then \eqref{eq: condition 1} follows from the definition of $||f_u||$.
\end{proof}

\subsection{Approximation by subcritical ssMt}\label{sec: approximation}
In this section, we introduce several approaches to couple a critical ssMt with an approximating sequence of subcritical ssMt. These couplings will be used to lift properties from subcritical ssMt to critical ssMt (lower bound on Hausdorff dimension, spinal decomposition...).

A simple way to couple self-similar Markov trees is to couple their  decoration-reproduction processes for each decorated branch. Fix a (critical) characteristic quadruplet $(\sigma^2, \mathrm{a}, \boldsymbol\Lambda ; \alpha)$ and fix  $\varepsilon > 0$. Recall from \cref{sec: construction of the ssMts} the construction of the decoration–reproduction process $(X,\eta)$ from a Lévy process $\xi$ and a point process $\mathbf{N}_1$ whose atoms are of the form $(s,\mathbf y)$. We provide three different processes $(X^{i, \varepsilon}, \eta^{i, \varepsilon})$ ($i = 1,2,3$) each with law $P^{i, \varepsilon}$ coupled with $(X, \eta)$ with law $P$.

\begin{description}
    \item[Adding negative drift] $(X^{1, \varepsilon}, \eta^{1, \varepsilon})$ is obtained by adding a drift $-\varepsilon$ to the underlying L\'evy process $\xi$. Specifically, set $\xi^{1, \varepsilon}_t = \xi_t - \varepsilon t$ and denote by $X^{1, \varepsilon}$ the Lamperti transform of $\xi^{1, \varepsilon}$. We define $\eta^{1,\varepsilon}$ by \eqref{eq: def eta}, with $\mathbf{N}_1$ unchanged and $\xi$ replaced by $\xi^{1,\varepsilon}$.
    \item[Adding killing] $(X^{2, \varepsilon}, \eta^{2, \varepsilon})$ is constructed similarly by adding killing with rate $\varepsilon$ to $\xi$. We denote by $\mathrm{e}(\varepsilon)$ an independent exponential time with parameter $\varepsilon$. The life time $z^{2, \varepsilon}$ of $(X^{2, \varepsilon}, \eta^{2, \varepsilon})$ equals $\epsilon(\zeta \wedge \mathrm{e}(\varepsilon) )$ where $\epsilon$ is the Lamperti time transform of $\xi$. We set $(X^{2, \varepsilon}, \eta^{2, \varepsilon}) = (X\cdot \mathds{1}_{[0, \epsilon(z^{2, \varepsilon})]}, \eta\cdot \mathds{1}_{[0, \epsilon(z^{2, \varepsilon})]\times\mathbb{R}^*})$. We always
   couple the exponential times $\mathrm{e}(\varepsilon)$ for different $\varepsilon$ such that they are decreasing in $\varepsilon$ (i.e.~$\mathrm{e}(\varepsilon)\uparrow \infty$ as $\varepsilon\downarrow 0$) a.s..

    \item[Lowering the reproduction] $(X^{3, \varepsilon}, \eta^{3, \varepsilon})$ is obtained by shifting downwards the atoms $(s, \Bd y)$ of $\mathbf{N}_1$ by $\varepsilon$. We set $X^{3, \varepsilon} = X$ and
    \[\eta^{3, \varepsilon} = \sum\mathds{1}_{\{\epsilon(s)\leq z\}}\delta_{(\epsilon(s), \exp(\xi(s-) + y_{\ell} - \varepsilon))}. \]
\end{description}

Denote by $P^{i, \varepsilon}_x$ the law of $(X^{i, \varepsilon}, \eta^{i, \varepsilon})$ starting from $x$ for $i=1,2,3$. See Figure \ref{fig:decoration} for illustration of the effects of the case 1,2 and 3 on a given decoration-process.

\begin{figure}
    \centering
    \includegraphics[width=15cm]{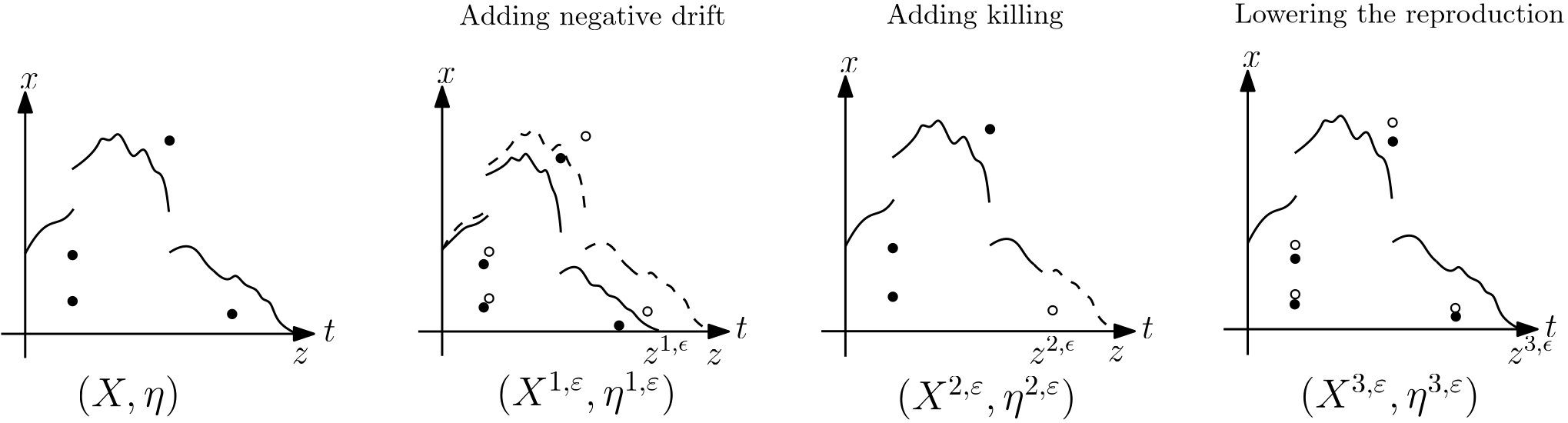}
    \caption{Illustration of a decoration-reproduction process $(X,\eta)$ and $(X^{i,\varepsilon},\eta^{i,\varepsilon})$ ($i=1,2,3$) coupled with it. The dots represent the locations of the atoms of the reproduction. The dashed lines and circles represent the original decoration–reproduction process $(X,\eta)$. In the drift case (1), the atoms of the reproduction are both shifted in time and space. In the killing case (2), the atoms are the same, the decoration-reproduction process is just possibly killed earlier. In the reproduction case (3), the atoms stay at the same position but are multiplied by $ \mathrm{e}^{- \varepsilon}$.}
    \label{fig:decoration}
\end{figure}

In each of the above three cases, in the obvious coupling of the atoms $(t_j^{i, \varepsilon}, x_j^{i, \varepsilon})_{j \geq 1}$ of $\eta^{i, \varepsilon}$ with the atoms $(t_j, x_j)_{j\geq 1}$ of $\eta$ we have $x_j^{i, \varepsilon} \leq x_j$, and the life times satisfy $z^{i, \varepsilon} \leq z$. These properties enable us to iterate the above coupling and construct two decorated-reproduction families $(f_u, \eta_u)_{u\in\mathbb{U}}$ and $(f_u^{i, \varepsilon}, \eta_u^{i, \varepsilon})_{u\in\mathbb{U}}$ of laws $\mathbb{P}$ and $\mathbb{P}^{i, \varepsilon}$. Since $(f_u, \eta_u)_{u\in\mathbb{U}}$ is compactly glueable, so is $(f_u^{i, \varepsilon}, \eta_u^{i, \varepsilon})_{u\in\mathbb{U}}$. Their gluings yield two decorated compact trees $\mathtt{T}$ and $\mathtt{T}^{i, \varepsilon}$ together with a natural projection
$ p^{i, \varepsilon} : \mathtt{T} \to \mathtt{T}^{i, \varepsilon}.$

\begin{prop}[Subcritical approximations]\label{prop: approximate critical from subcritical} Suppose that  $(\sigma^2, \mathrm{a}, \boldsymbol\Lambda ; \alpha)$ is a critical characteristic quadruplet satisfying \cref{Assumption A} or \cref{Assumption A'}. For any $ \varepsilon >0$, the decorated trees $\mathtt{T}^{i, \varepsilon}$ ($i \in \{1,2,3\}$) are subcritical ssMt with characteristics, respectively, given by  $(\sigma^2, \mathrm{a}-\varepsilon, \Bd\Lambda;\alpha)$, $(\sigma^2, \mathrm{a}, \Bd\Lambda + \varepsilon\delta_{(-\infty, (-\infty,\ldots))};\alpha)$ and $(\sigma^2, \mathrm{a}, \Bd\Lambda^{\varepsilon};\alpha)$ with
    \[\Bd\Lambda^{\varepsilon}(\D y, \D\Bd y) = \Bd\Lambda(\D y, (\D(y_i + \varepsilon))_{i\geq 1}).\]
In all three cases we have  ${\tt T}^{i, \varepsilon} \to \tt T$ a.s. for the Gromov--Hausdorff--decorated distance and furthermore $p^{i, \varepsilon} : {\tt T}^{i, \varepsilon} \mapsto \tt T$ is $1$-Lipschitz.
\end{prop}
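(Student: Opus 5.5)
The proof has three parts: identify the characteristics, check subcriticality, and prove the convergence together with the Lipschitz property of the projection.

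\textbf{Characteristics.} In each case I check that $(X^{i,\varepsilon},\eta^{i,\varepsilon})$ is the Lamperti-transformed decoration–reproduction process built from a modified Lévy input and Poisson measure.
\begin{itemize}
\item[(1)] $\xi-\varepsilon t$ is Lévy with characteristics $(\sigma^2,\mathrm a-\varepsilon,\Lambda_0)$. The reproduction uses $\xi^{1,\varepsilon}(s-)+y_\ell$ with the same $\mathbf N_1$, and the time change is recomputed from $\xi^{1,\varepsilon}$. This is exactly the recipe with $\boldsymbol\Lambda$ unchanged and drift $\mathrm a-\varepsilon$.
\item[(2)] Killing at an independent rate $\varepsilon$ amounts to adding an atom of mass $\varepsilon$ at $(-\infty,(-\infty,\dots))$ to $\boldsymbol\Lambda$. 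Such an atom sends $\xi$ to $-\infty$ and produces no children.
\item[(3)] Shifting the $y_\ell$ by $-\varepsilon$ is the image of $\mathbf N_1$ under the shift, which is Poisson with intensity $\mathrm dt\,\boldsymbol\Lambda^\varepsilon$.
\end{itemize}
Self-similarity is preserved in every case.

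\textbf{Subcriticality.} I compute the new cumulants:
\[\kappa^{1,\varepsilon}(\gamma)=\kappa(\gamma)-\varepsilon\gamma,\qquad \kappa^{2,\varepsilon}(\gamma)=\kappa(\gamma)-\varepsilon,\qquad \kappa^{3,\varepsilon}(\gamma)=\psi(\gamma)+\mathrm e^{-\varepsilon\gamma}\bigl(\kappa(\gamma)-\psi(\gamma)\bigr).\]
In case (2) the new atom contributes $\varepsilon(\mathrm e^{-\infty}-1)=-\varepsilon$.
\begin{itemize}
\item Evaluating at $\omega_-$ gives $\kappa^{1,\varepsilon}(\omega_-)=-\varepsilon\omega_-<0$ when $\omega_->0$, and $\kappa^{2,\varepsilon}(\omega_-)=-\varepsilon<0$ always.
\item For case (3), $\kappa^{3,\varepsilon}(\omega_-)=(1-\mathrm e^{-\varepsilon\omega_-})\psi(\omega_-)$. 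Since $\psi(\omega_-)=\kappa(\omega_-)-\int\sum_i \mathrm e^{\omega_-u_i}\le 0$, this is strictly negative unless the reproduction part at $\omega_-$ vanishes.
\item For the degenerate boundary cases (for instance $\omega_-=0$ in case (1)), I evaluate at a point $\gamma\in(\omega_-,\gamma_1]$ close to $\omega_-$ instead. The finiteness of $\kappa(\gamma_1)$ and $\psi(\gamma_1)<0$ from \cref{Assumption A} give strict negativity there: by continuity $\kappa(\gamma)$ is small while the perturbation is of order $\varepsilon$.
\end{itemize}
Hence $\inf\kappa^{i,\varepsilon}<0$, and \cite{BertoinJean2024SMta} applies, so each $\mathtt T^{i,\varepsilon}$ is a subcritical ssMt.

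\textbf{Lipschitz property and convergence.} In the coupling, each branch of $\mathtt T^{i,\varepsilon}$ is identified with an initial segment of the corresponding branch of $\mathtt T$. In case (1) the time changes compare, since $\xi^{1,\varepsilon}\le\xi$ gives $\epsilon^{1,\varepsilon}\le\epsilon$, so the map $t\mapsto\epsilon(\tau^{1,\varepsilon}(t))$ is used. Gluing points are preserved, and the decorations satisfy $g^{i,\varepsilon}\le g$ at corresponding points. The glued tree map between $\mathtt T^{i,\varepsilon}$ and $\mathtt T$ is then $1$-Lipschitz, because geodesics are sums of branch lengths, which can only shrink.

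For the convergence I apply \cref{thm: gluing building blocks} uniformly.
\begin{itemize}
\item Fix $\delta>0$. The null-family property of $(\|f_u\|)$ and the tail condition in \eqref{eq: condition 1} for the critical family give a finite set of individuals $U_\delta$ such that the subtrees above $U_\delta$ have height and decoration at most $\delta$. This bound persists in $\mathtt T^{i,\varepsilon}$ by monotonicity.
\item On the finitely many branches of $U_\delta$, I show branch-wise convergence as $\varepsilon\to0$: lifetimes, gluing times and decorations converge.
\begin{itemize}
\item In case (2), a.s. for $\varepsilon$ small, $\mathrm e(\varepsilon)$ exceeds every $\zeta_u$, $u\in U_\delta$, so those branches coincide exactly.
\item In case (1), $\xi-\varepsilon t\to\xi$ uniformly on compacts. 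The exponential functionals converge by dominated convergence, and convergence of the time-changed processes holds in the Skorokhod sense.
\item In case (3), the branches are the same, and the children's initial decorations are multiplied by $\mathrm e^{-\varepsilon}\to1$. Through the self-similar scaling of the descendants' processes, which the coupling uses, this gives convergence of the descendants.
\end{itemize}
\item Summing these errors bounds the decorated Gromov–Hausdorff distance by $O(\delta)+o_\varepsilon(1)$.
\end{itemize}

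The main obstacle is this last convergence step. Controlling the hypograph distance for the decorations, uniformly over all the finitely many branches at once, is the delicate part, especially in case (1): there the Lamperti time change and the shifted gluing times (the atoms move in time as well) interact with jumps of $\xi$. I would handle it through the continuity of the Lamperti transform at times where $\xi$ does not jump, using that the gluing atoms a.s. avoid fixed times.
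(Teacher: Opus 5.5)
Your proposal follows essentially the paper's route. The characteristics are read off from the construction, and the $1$-Lipschitz property comes from the pointwise comparison of the Lamperti clocks, together with pruning in case (2) and rescaling in case (3). The convergence combines branchwise convergence on finitely many individuals with a remainder controlled uniformly in $\varepsilon$ by monotonicity of the coupling. Your set $U_\delta$ plays the role of the paper's subtrees $\mathtt{T}(N)$ spanned by $\{1,\dots,N\}^N$, for which the paper uses $d_{\mathbb{T}}(\mathtt{T}^{i,\varepsilon},\mathtt{T}^{i,\varepsilon}(N))\le d_{\mathbb{T}}(\mathtt{T},\mathtt{T}(N))$. Your explicit cumulant computations usefully fill in what the paper calls clear. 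In case (3) the exceptional situation you mention cannot occur: $\kappa(\gamma_1)\ge 0>\psi(\gamma_1)$ forces $\Bd\Lambda_1\neq 0$, hence $\psi(\omega_-)<0$.

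One step is false as written. In case (2) you claim that for small $\varepsilon$ the clock $\mathrm{e}(\varepsilon)$ exceeds every $\zeta_u$, $u\in U_\delta$, so these branches coincide exactly. This fails whenever $\zeta_u=\infty$, i.e.\ when $\Lambda_0$ has no atom at $-\infty$ and $\xi_u$ drifts to $-\infty$, so that $f_u(z_u-)=0$. That is a common situation, for instance in the A\"id\'ekon--Da Silva example. The paper treats this subcase separately, and the repair is routine:
\begin{itemize}
\item the killed branch is the initial segment $[0,\epsilon(\mathrm{e}(\varepsilon))]$ with $\epsilon(\mathrm{e}(\varepsilon))\uparrow z_u$;
\item the discarded piece carries decoration at most $\chi(u)\sup_{s\ge \mathrm{e}(\varepsilon)}\mathrm{e}^{\xi_u(s)}\to 0$;
\item the finitely many birth times of children of $u$ lying in $U_\delta$ are eventually smaller than $\mathrm{e}(\varepsilon)$.
\end{itemize}
So the contribution is $o_\varepsilon(1)$.

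Relatedly, \emph{initial segment} in your Lipschitz paragraph is accurate only in case (2). In case (3), a generation-$n$ branch of $\mathtt{T}^{3,\varepsilon}$ is the corresponding branch of $\mathtt{T}$ rescaled by $\mathrm{e}^{-\varepsilon n}$ in decoration and $\mathrm{e}^{-\alpha\varepsilon n}$ in length. In case (1), matching L\'evy times maps a whole branch onto a whole branch. The $1$-Lipschitz map is the one from $\mathtt{T}$ onto $\mathtt{T}^{i,\varepsilon}$, i.e.\ the inverse of your $t\mapsto\epsilon(\tau^{1,\varepsilon}(t))$. This is the direction used for the Hausdorff-dimension lower bound.
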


\begin{proof} The claim that $\mathtt{T}^{i, \varepsilon}$ are subcritical ssMt with the above quadruplets is clear from the construction. Notice that in the second case (adding killing), the subtree $\mathtt{T}^{3, \varepsilon}$ is obtained by pruning $\mathtt{T}$ at countably many points at most.  In the third case, lowering the reproduction amounts to scale the branches $\mathtt{T}$ by $\mathrm{e}^{- \varepsilon n}$ where $n$ is the generation of the individual. In those two cases, clearly the projection is $1$-Lipschitz. In case $1$, the Lamperti time transform over any interval $(s, t)$ satisfies $$\epsilon^{1,\varepsilon}(t) - \epsilon^{1,\varepsilon}(s)<\epsilon(t)- \epsilon(s).$$ Iterating the argument, we get that the canonical projection is also $1-$Lipschitz.  We finally prove the convergence of the approximating subcritical trees. We first see that the first branch converges as $\varepsilon\to 0$ since the process converges a.s.. We make it more precisely in the second case. If $f_{\varnothing}(z_{\varnothing}-)>0$, we have $(f^{2, \varepsilon}_{\varnothing}, \eta^{2, \varepsilon}_{\varnothing}) = (f_{\varnothing}, \eta_{\varnothing})$ when $\varepsilon$ is small enough. Otherwise, when $f_{\varnothing}(z_{\varnothing}-)=0$, as $\varepsilon\downarrow 0$, the Skorohod distance between $f_{\varnothing}$ and $f_{\varnothing}^{2,\varepsilon}$ goes to $0$ a.s.. A moment's thought shows that for any $N \geq 1$, as $ \varepsilon \to 0$, the subtree $\mathtt{T}^{i, \varepsilon}(N) \subset \mathtt{T}^{i, \varepsilon}$ spanned by the individuals $u \in \{1,... , N\}^{N}$ converges in the Gromov--Hausdorff--decorated sense $d_{\mathbb{T}}$ (no measure) towards the analogous subtree in $\mathtt{T}(N) \subset\mathtt{T}$. However, by the monotonicity properties of the coupling we have $ d_{\mathbb{T}}(\mathtt{T}^{i, \varepsilon}, \mathtt{T}^{i, \varepsilon}(N)) \leq d_{\mathbb{T}}(\mathtt{T}, \mathtt{T}(N))$. The latter tends to $0$ by the compactly glueability of $(f_u, \eta_u)_{u \in \mathbb{U}}$. This enables us to exchange the limits $N \to \infty$ and $ \varepsilon \to 0$ and get the statement.
\end{proof}

\begin{rem}[Intrinsic approximation] \label{rem: intrinsic approx} Contrary to the third case which requires the underlying family of decoration-reproduction, adding a killing or a drift is an operation which is geometrically intrinsic, that is, (the isometry class of) the random decorated tree $\mathtt{T}^{i, \varepsilon}$ is a measurable function of  $\mathtt{T} \in \mathbb{T}$ for $i=1$ and $i=2$ (together with additional independent randomness). Let us give some explanation and leave the technical details to the reader: For $x\in T$, we view the function $g:\llbracket \rho, x\rrbracket\to \mathbb{R}_+$ as a function $g:[0, d(\rho, x)] \to \mathbb{R}_+$. We can then take the inverse Lamperti transform to get the underlying L\'evy process, add a drift $-\varepsilon$ or a killing rate $ \varepsilon$, and finally take the Lamperti transform back to obtain the corresponding branch in $\mathtt{T}^{i, \varepsilon}$. For different $x, y\in {\tt T}$, such chain of transformations agrees on $\llbracket \rho, x\wedge y \rrbracket$ where $x\wedge y$ is the last common point of the geodesics. The point is that, by the memoryless property of the exponential variables, these constructions can be performed for all $x \in \mathtt{T}$ simultaneously and coherently, which ensures the well-posedness of the construction.
\end{rem}

\subsection{Examples}

We provide a few examples of critical ssMt. First of all, in the light of Proposition \ref{prop: approximate critical from subcritical}, many subcritical ssMt can conversely be transformed into critical ones by possibly lowering the killing rate, adding a positive drift or increasing the reproduction. The reader may try this procedure on the examples of \cite[Chapter 3]{BertoinJean2024SMta}. Perhaps the most important example of critical ssMt so far is the one of A\"id\'ekon and Da Silva in \cite{aidekon2022growth} (see  \cite[Section 3.4]{BertoinJean2024SMta}) which has attracted considerable recent attention:

\begin{Ex}[A\"id\'ekon \& Da Silva \cite{aidekon2022growth}]\label{ex: 1} Let $\mathtt{T}$ be the ssMt under $\mathbb{P}$ with characteristic quadruplet $(0,\mathrm{a}_{\rm ads},\Bd\Lambda_{\rm ads};1)$ where $\Bd\Lambda_{\rm ads} $ is defined by
\begin{align*}
    &\int F(\mathrm{e}^{y_0},(\mathrm{e}^{y_1},\dots))\Bd\Lambda_{\rm ads}(\D y_0,\D (y_i)_{i\ge 1}) \\
    =& \frac{2}{\pi} \left(\int_{1/2}^1\frac{\D x}{(x(1-x))^2} F(x,(1-x,0,\dots))+\int_{0}^\infty\frac{\D x}{(x(1+x))^2} F(x+1,(x,0,\dots))\right),
\end{align*}
and $\mathrm{a}_{\rm ads}=-\frac{4}{\pi}+\frac{2}{\pi} \int_{-\log 2}^{\infty} \D y (y\mathds{1}_{|y|\le 1}-(\mathrm{e}^y-1))\frac{\mathrm{e}^{-y}}{(\mathrm{e}^y-1)^2}$ (see \cite[Example.3.13]{BertoinJean2024SMta}). An explicit calculation shows that  $\kappa(\gamma)=2(\gamma-2)\tan(\frac{\pi \gamma}{2})$ when $3/2<\gamma<5/2$. We check that \cref{Assumption A} (with $\omega_- = 2$) and \cref{Assumption B} are satisfied. Thus $\mathtt{T}$ is indeed a critical ssMt. In fact, this ssMt can be seen as a variant of Brownian CRT dressed with an uncommon decoration:
A\"id\'ekon \& Da Silva \cite{aidekon2022growth} proved that it appears in the tree structure underneath a half-planar Brownian excursion where the vertical displacement encodes the tree structure and the horizontal displacement the decoration. This tree also appears concerning critical $O(n)$-loop model on planar maps \cite[Example 3.14]{BertoinJean2024SMta} as well as in conjectured scaling limits of random flat disks see \cite{budd2025discrete}.
\end{Ex}

Another critical example with finite intensity is taken from  \cite[Example 3.3]{BertoinJean2024SMta}:

\begin{Ex}[Branching Bessel processes] Consider the characteristic quadruplet $(1,-\sqrt{2},\delta_{(-\infty,(0,0,-\infty))};2)$. The underlying continuous time branching process is called the binary branching Brownian motion with drift $-\sqrt{2}$. By a direct calculation, its cumulant function is given by
    \begin{align*}
        \kappa(\gamma)=\frac{\gamma^2}{2}-\sqrt{2}\gamma+1.
    \end{align*}
\noindent The characteristic quadruplet defines a critical ssMt since Assumption \ref{Assumption A} holds with $\omega_- = \sqrt2$.
\end{Ex}




\begin{center} \hrulefill \ In what follows we suppose \cref{Assumption A} and the forthcoming \cref{Assumption B}. \hrulefill  \end{center}

\section{Measures on critical ssMt}\label{sec: length measure}

On subcritical self-similar Markov trees, two different kinds of measures were constructed in \cite{BertoinJean2024SMta}, namely length measures supported on the skeleton of tree and the harmonic measure supported on its leaves. In this section, we construct their analogous versions in the critical case: Given a decorated tree $\mathtt{T}$, for $\gamma >0$ we consider the $\gamma$-length measure $\lambda^\gamma$ whose density with respect to the natural Lebesgue measure $\mathrm{d}\lambda_T$ on its skeleton is given by $g^{\gamma-\alpha}$. In our critical ssMt case, following the argument in \cite[Section 2.3.1]{BertoinJean2024SMta}, we get
\[
\mathbb{Q}_1( \lambda^\gamma(T)) =
\mathbb{E}\bigg[\sum_{u\in\mathbb{U}}\int^{z_u}_0 f_u(t)^{\gamma-\alpha}\D t\bigg] = \mathbb{E}\bigg[\sum_{u\in\mathbb{U}}(\chi(u))^{\gamma}\bigg]E_1\bigg[\int^{z}_0 f(t)^{\gamma_\alpha}\D t \bigg].
\]
Since $\mathbb{E}[\sum_{|u|=n}(\chi(u))^{\gamma}] = \mathcal{M}(\gamma)^n$ by the branching property, the total mass  $\lambda^\gamma(T)$  has infinite expectation and it is not clear whether the measure $\lambda^{\gamma}$ is finite. We show in \cref{prop: expected length} that under \cref{Assumption A} the answer is yes for $\gamma>\omega_-$. In fact,  we shall prove in \cref{sec: tree below barrier} that the measure restricted to the tree below a barrier has finite expectation for $\omega_-<\gamma< \gamma_1$.

To study such measures we first need to introduce some tools from  branching random walks theory and fluctuation theory for L\'evy processes.

\subsection{Tools from Branching processes}\label{sec: tools from branching processes}

\noindent \textbf{Branching L\'evy processes.} Branching L\'evy processes were formally introduced by Bertoin \& Mallein in \cite{BertoinJean2019IRPM}. It corresponds to the branching structure of the underlying decoration reproduction family $(f_u, \eta_u)_{u \in \mathbb{U}}$ before the Lamperti transformation. We refer to \cite{MalleinBastien2023Anas} for details.

More precisely, let $(\xi_u, N_u =\sum_i\delta_ {(s_{ui}, y_{ui})})_{u\in \mathbb U}$ be the L\'evy processes and reproduction point processes sampled in the construction of $(f_u, \eta_u)_{u \in \mathbb{U}}$. Note that $f_u$ is the Lamperti transform of $\xi_u$ and $\eta_u$ is the image of $N_u$ under the time change. We identify the global time parameter of the system with that of the ultimate ancestor $(\xi_\varnothing, N_{\varnothing})$. For each $u\in\mathbb{U}^*$, we set $b(u) = \sum^{|u|}_{k=1} s_{u_k}$, which is the birth time of $(\xi_u, N_u)$ in the system. Define $\mathcal{N}_t = \{u: b(u)\leq t<z_u+b(u)\}$ to be the set of individuals alive at time $t$. For each $u\in \mathcal{N}_t$, we set its position $\Xi(t,u) = \xi_u(t-b(u))$. For ease of notation, if $u \notin \mathcal{N}_t$ but has an ancestor $v \in \mathcal{N}_t$ at time $t$, we set
\(\Xi(t,u) := \Xi(t,v)\). The system $(\Xi(t,u), u\in\mathcal{N}_t)_{t\geq 0}$ is then a branching L\'evy process.

The cumulant function $\kappa$ defined in \eqref{def: kappa} is also the Laplace exponent of this underlying branching L\'evy processes $(\Xi(t,u), u\in \mathcal{N}_t)$ in the sense that
\begin{equation}\label{eq: brl cumulant}
\mathbb{E}\bigg[\sum_{u\in\mathcal{N}_t} \exp(\gamma \Xi(t,u))\bigg] = \exp(t\kappa(\gamma)).
\end{equation}

\bigskip

\noindent \textbf{Many-to-one formula for branching L\'evy processes. } As a widely used tool for branching L\'evy processes, the many-to-one formula could convert calculations over all  individuals in the system to that on a single individual. We first introduce the L\'evy process $\hat{\xi}$ governing a tagged particle which will appear many times in this work. In \eqref{eq: brl cumulant}, by  \cite[Lemma 5.5]{BertoinJean2024SMta} the function \begin{equation}
\gamma \mapsto \kappa(\omega_-+\gamma) \label{eq: brl spine} \end{equation} is the L\'evy--Khintchine exponent of a L\'evy process $\hat{\xi}$ with explicit characteristics $(\mathrm{a}_{\omega_-}, \sigma^2, \Pi)$ (The precise definition of $\mathrm{a}_{\omega_-}$ and $\Pi$ will be given in Section \ref{sec: statement spinal decomposition}).  Write $\hat{P}$ and $\hat{E}$ for the probability and expectation of $\hat{\xi}$ which starts from $0$. We now import the many-to-one formula from \cite[Lemma 3.1]{MalleinBastien2023Anas}.
\begin{lem}\label{lem: brl many-to-one}
For a generic positive functional $F$, we have for $t\geq 0$,
\begin{equation}\label{eq: brl many-to-one}
\mathbb{E}\bigg[\sum_{u\in\mathcal{N}_t} F(\Xi(t,u))\bigg] = \hat{E}\bigg[\mathrm{e}^{-\omega_- \hat{\xi}_t}F(\hat\xi_t)\bigg].
\end{equation}

\end{lem}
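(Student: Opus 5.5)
The plan is a standard exponential change of measure. I will first build the tagged L\'evy process $\hat\xi$ from the decoration–reproduction dynamics, then identify, by comparing Laplace transforms, the law of $\Xi(t,\cdot)$ under the measure $\sum_{u\in\mathcal N_t}\mathrm{e}^{\omega_-\Xi(t,u)}\,\D\mathbb P$. Since both sides of \eqref{eq: brl many-to-one} are linear in $F$, it suffices to take $F(x)=\mathrm{e}^{\theta x}$ for $\theta$ in a small real interval around $0$, or a complex strip. Once the Laplace transforms match there, the two finite measures agree. Monotone class and monotone convergence then extend the identity to all positive measurable $F$.

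The key computation is that, by \eqref{eq: brl cumulant},
\[
\mathbb E\Big[\sum_{u\in\mathcal N_t}\mathrm{e}^{\omega_-\Xi(t,u)}\mathrm{e}^{\theta\Xi(t,u)}\Big]=\exp\big(t\kappa(\omega_-+\theta)\big).
\]
By the property recalled in \eqref{eq: brl spine} (from \cite[Lemma 5.5]{BertoinJean2024SMta}), this equals $\hat E[\mathrm{e}^{\theta\hat\xi_t}]$, because $\gamma\mapsto\kappa(\omega_-+\gamma)$ is the Laplace exponent of $\hat\xi$. Taking $F(x)=\mathrm{e}^{\theta x}$ and writing $G(x)=\mathrm{e}^{-\omega_- x}F(x)$, this gives
\[
\mathbb E\Big[\sum_{u\in\mathcal N_t}\mathrm{e}^{\omega_-\Xi(t,u)}G(\Xi(t,u))\Big]=\hat E[G(\hat\xi_t)]\qquad\text{for } G(x)=\mathrm{e}^{\theta x}.
\]
Substituting $F=\mathrm{e}^{\omega_-\cdot}G$ turns this into \eqref{eq: brl many-to-one}. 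I must check that $\theta$ can range over an open interval where $\kappa(\omega_-+\theta)<\infty$. This holds because $\kappa$ is twice differentiable at $\omega_-$ under \cref{Assumption A}, and also because $\kappa(\gamma_1)<\infty$ with $\gamma_1>\omega_-$ (and $\kappa(\omega_-)=0$). Convexity then gives finiteness on $[\omega_-,\gamma_1]$, and twice differentiability gives a neighbourhood on the left.

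The delicate point is that $\hat\xi$ may have a killing term (a mass at $-\infty$ in $\Pi$), and the state $-\infty$ must be handled consistently on both sides. I will use the convention $F(-\infty)\mathrm{e}^{-\omega_-(-\infty)}:=0$, since individuals at $-\infty$ are not in $\mathcal N_t$. For the structural identification I would simply cite \cite[Lemma 3.1]{MalleinBastien2023Anas}, as the paper does.

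The main obstacle is rigorously establishing \eqref{eq: brl cumulant} itself, that is, the expected number of descendants at time $t$ for a system with possibly infinitely many births. My approach is to sum over generations using the many-to-one formula for the embedded branching random walk, $\mathbb E\big[\sum_{|u|=n}\cdot\big]$, combined with the compensation formula for the Poisson measure $\mathbf N_1$. This yields a renewal-type equation for $m_\gamma(t)=\mathbb E\big[\sum_{u\in\mathcal N_t}\mathrm{e}^{\gamma\Xi(t,u)}\big]$:
\[
m_\gamma(t)=\mathrm{e}^{t\psi(\gamma)}+\int_0^t \mathrm{e}^{s\psi(\gamma)}\Big(\int\boldsymbol\Lambda(\D\mathbf u)\sum_{i\ge1}\mathrm{e}^{\gamma u_i}\Big)m_\gamma(t-s)\,\D s.
\]
Its unique locally bounded solution is $\mathrm{e}^{t\kappa(\gamma)}$. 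To justify uniqueness and local boundedness, I would truncate to the first $n$ generations, obtain a finite solution, and let $n\to\infty$ by monotone convergence.
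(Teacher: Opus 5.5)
Your argument is correct, but it takes a different route from the paper. The paper gives no proof: it simply imports the identity from Lemma~3.1 of \cite{MalleinBastien2023Anas}, a spine-type many-to-one formula.

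You instead reduce the lemma to the two facts stated just before it. The measure $\nu_t(\D x)=\mathbb E[\sum_{u\in\mathcal N_t}\mathrm e^{\omega_-\Xi(t,u)}\delta_{\Xi(t,u)}(\D x)]$ and the law of $\hat\xi_t$ are both probability measures with moment generating function $\mathrm e^{t\kappa(\omega_-+\theta)}$, by \eqref{eq: brl cumulant} and \eqref{eq: brl spine}. By convexity this is finite for $\theta\in(0,\gamma_1-\omega_-)$. A one-sided interval suffices: tilt both measures by $\mathrm e^{\theta_0 x}$ with $\theta_0$ in the middle of the interval. So you never need finiteness of $\kappa$ to the left of $\omega_-$.

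Your compensation-formula derivation of \eqref{eq: brl cumulant} is sound. The generation truncation even solves it explicitly: $m^{(n)}_\gamma(t)=\mathrm e^{t\psi(\gamma)}\sum_{k\le n}(t\Lambda^{(1)}_\gamma)^k/k!$, where $\Lambda^{(1)}_\gamma=\int\boldsymbol\Lambda(\D\mathbf u)\sum_{i\ge1}\mathrm e^{\gamma u_i}$. Monotone convergence then gives $\mathrm e^{t\kappa(\gamma)}$ with no uniqueness argument. The killing issue you worry about does not arise, since $\hat P(\hat\xi_t>-\infty)=\mathrm e^{t\kappa(\omega_-)}=1$. Your closing remark about citing Mallein is superfluous once the moment generating function argument is in place.

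What your route buys is an elementary, self-contained proof. It shows the displayed identity is nothing more than \eqref{eq: brl cumulant} combined with \eqref{eq: brl spine}.

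What the citation buys is a path-level statement. The paper later applies the lemma to functionals of the whole lineage, for example $\mathrm e^{\omega_-\Xi(t,u)}R(b-\Xi(t,u))\mathds{1}_{\{\tau_b(u)>t\}}$ in \cref{prop: expected length} and \cref{prop: derivative martingale}. It also uses a stopping-line form in \cref{lem: Truncation}. A one-time marginal identification does not reach these. To cover them you would repeat your exponential computation for finite-dimensional distributions, conditioning at fixed intermediate times $t_1<\dots<t_k$ and using the branching property there. A monotone class argument then finishes. The stopping-line version needs a further optional-stopping step.
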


\noindent We should also use the stopping line version. Roughly speaking, the stopping line is the collection of some stopping time along each lineage. We refer to \cite{BigginsJ.D.2004Mcim} for details.

\bigskip

\noindent \textbf{Many-to-one formula for branching random walks.} We shall also use the discrete many-to-one formula for branching random walk. Roughly speaking, a branching random walk is a discrete time particle system where in each generation, independently the individuals give birth to their offspring who have displacement from the position of the parent according to a certain point process. We refer to \cite{ShiZhan2016BRWE} for a detailed discussion. In our setting, the logarithms of types $(\log \chi(u))_{u\in \mathbb U}$ forms a branching random walk. Define the law of a random variable ${\hat{\tt S}}_1$ from the size-biasing: for all bounded and measurable functions $F$,
\[{\tt E}[F({\hat{\tt S}}_1)] = \mathbb{E}\bigg[\sum_{|u| = 1} (\chi(u))^{\omega_- } F(-\log \chi(u))\bigg].\]
Let ${\hat{\tt S}}_0 = 0$ and $({\hat{\tt S}}_i-{\hat{\tt S}}_{i-1})_{i\geq 1}$ be i.i.d random variables. We could now import the many-to-one formula from {\cite[Theorem 1.1]{ShiZhan2016BRWE}}

\begin{lem}\label{lem: many-to-one}
Fix $n \geq 1$, for a generic positive functional $F$ we have
\begin{equation}\label{eq: many-to-one}
\mathbb{E}\bigg[\sum_{|u|=n} (\chi(u))^{\omega_-}F(-\log\chi(u_1), \ldots, -\log\chi(u_n))\bigg] = {\tt E}[F({\hat{\tt S}}_1, \ldots, {\hat{\tt S}}_n)].
\end{equation}
\end{lem}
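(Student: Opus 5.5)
The statement to prove is the discrete many-to-one formula, Lemma \ref{lem: many-to-one}. I will argue by induction on $n$, using the branching property of the decoration-reproduction family under $\mathbb{P}$. The key inputs are the criticality identity $\mathbb{E}\big[\sum_{|u|=1}\chi(u)^{\omega_-}\big]=\mathcal{M}(\omega_-)=1-\kappa(\omega_-)/\psi(\omega_-)=1$, and self-similarity.

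\textbf{Step 1: the one-step law is a probability measure.} Criticality gives $\kappa(\omega_-)=0$, and \cref{Assumption A} allows $\psi(\omega_-)<0$ to be used in \eqref{eq: Merlin transform}. Hence the measure
\[F\mapsto \mathbb{E}\Big[\sum_{|u|=1}\chi(u)^{\omega_-}F(-\log\chi(u))\Big]\]
has total mass $1$. So $\hat{\tt S}_1$ is well defined, and $n=1$ is exactly its definition.

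\textbf{Step 2: reduce to products of one-variable functions.} By a monotone class argument it suffices to treat $F(s_1,\dots,s_n)=\prod_{k=1}^n h_k(s_k)$ with $h_k$ bounded and nonnegative. Monotone convergence then extends the identity to all positive $F$.

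\textbf{Step 3: induction via the branching property.} Assume the identity holds at rank $n$. Condition on the first generation $\mathcal{F}_1$. For each $i$ with $\chi(i)>0$, the subfamily rooted at $i$ has law $\mathbb{P}_{\chi(i)}$. By self-similarity, the types of its descendants are $\chi(i)$ times the types under $\mathbb{P}_1$, so that $-\log\chi(iv_k)=-\log\chi(i)+(-\log\chi'(v_k))$, where $\chi'$ denotes the types of an independent copy of the family under $\mathbb{P}_1$. Writing $\chi(iv)^{\omega_-}=\chi(i)^{\omega_-}\chi'(v)^{\omega_-}$ and conditioning on $\mathcal{F}_1$, the rank-$(n+1)$ expectation becomes
\[\mathbb{E}\Big[\sum_{|i|=1}\chi(i)^{\omega_-}\,G\big(-\log\chi(i)\big)\Big],\]
where
\[G(a):={\tt E}\big[F(a,a+\hat{\tt S}_1,\dots,a+\hat{\tt S}_n)\big].\]
Here the rank-$n$ hypothesis has been applied to the inner sum, with its arguments shifted by $a$. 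By Step 1, the display equals ${\tt E}\big[G(\hat{\tt S}_1)\big]$ with $\hat{\tt S}_1$ independent of the later increments. This is precisely ${\tt E}\big[F(\hat{\tt S}_1,\dots,\hat{\tt S}_{n+1})\big]$ for the random walk with i.i.d.\ increments.

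\textbf{Main obstacle.} The delicate point is justifying the conditioning step rigorously. The first generation may have infinitely many individuals, and fictitious individuals have $\chi=0$. Fictitious ones contribute $0^{\omega_-}=0$; when $\omega_-=0$, one should restrict the sum to non-fictitious $u$, or use the convention $0^0\cdot F=0$. The infinite sum is handled by Tonelli, since everything is nonnegative. Measurability of $G$ follows from Fubini.
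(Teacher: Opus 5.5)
Your proof is correct. The paper gives no argument of its own and imports the identity from the branching random walk reference it cites (Theorem 1.1 there), whose proof is exactly your induction: condition on the first generation, apply the rank-$n$ identity to the shifted functional $F(a,a+\cdot)$, and use $\mathcal{M}(\omega_-)=1$. Your Step 2 (monotone class) is superfluous, since Tonelli handles a general positive $F$ directly.

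The one place your Step 1 is vague is the claim that Assumption A ``allows'' $\psi(\omega_-)<0$. It does, by a one-line argument. Since $\kappa(\omega_-)=0$, we have $\psi(\omega_-)=-\int\boldsymbol{\Lambda}_1(\mathrm{d}\boldsymbol{y})\sum_{i}\mathrm{e}^{\omega_- y_i}\le 0$. The inequality is strict because $\psi(\gamma_1)<0\le\kappa(\gamma_1)$ forces the reproduction to be non-trivial.
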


We will also use the stopping line version of \cref{lem: many-to-one}.

\subsection{Fluctuation theory for the L\'evy process $\hat{\xi}$}
\label{sec: fluctuations}
We collect here information about fluctuation theory for the process $\hat{\xi}$. See \cite{bertoin1996levy} for details.  Recall that we work under \cref{Assumption A} and let us introduce the renewal functions for the process ${\hat{\tt S}}$ and $\hat{\xi}$ in the discrete and continuous many-to-one formulas. The conditions $\kappa'(\omega_-) = 0$ and $\kappa''(\omega_-) < \infty$ implies that ${\tt E}[{\hat{\tt S}}_1] = 0$, ${\tt E}[{\hat{\tt S}}^2_1] < \infty$, $\hat{E}[\hat{\xi}_1] = 0$ and $\hat{E}[\hat{\xi}^2_1]<\infty$.  In particular, ${\hat{\tt S}}$ and $\hat{\xi}$ are oscillating. They even admit small exponential moments $ {\tt E}[\exp((\gamma-\omega_-){\hat{\tt S}}_1)]< \infty$ and  ${\hat E}[\exp((\gamma-\omega_-)\hat{\xi}_t)]< \infty$ for  $\gamma\in (\omega_-,\gamma_1)$.

For $b > 0$, let $\hat{\tau}_b = \inf\{t\geq 0: \hat{\xi}_t>b\}$ be the hitting time of $(b, \infty)$ and $\hat{\tau}_{-b}^- = \inf\{t\geq 0: \hat{\xi}_t<-b\}$ be the hitting time of $(-\infty, -b)$. Let $L$ be the local time at $0$ of the process $(\sup_{s\leq t} \hat{\xi}_s - \hat{\xi}_t)_{t\geq 0}$, and $L^-$ be the local time at $0$ of the process $(\hat{\xi}_t-\inf_{s\leq t} \hat{\xi}_s)_{t\geq 0}$. We choose an arbitrary normalisation of the local times, since the particular choice does not affect the results that follow. For $x\geq 0$, define the renewal functions
\begin{align}\label{eq: def R levy}
    R(b) = \hat{E}\bigg[\int^{\infty}_{0} \mathds{1}_{\{\hat{\tau}_b > t\}}\mathrm{d}L_t\bigg], \quad R^-(b) = \hat{E}\bigg[\int^{\infty}_{0}\mathds{1}_{\{\hat{\tau}^-_{-b} > t\}}\D L^-_t\bigg].
\end{align}

\noindent These functions are increasing and since $\hat{\xi}$ does not drift to $\infty$, by \cite[Lemma 1]{ChaumontLoïc2005OLpc} the function $R$ is harmonic for $\hat{\xi}$ killed above $b$, that is for $b > 0$ and $t\geq 0$
\begin{align}\label{eq: renewal martingale}
R(b) = \hat{E}[R(b-\hat{\xi}_t)\mathds{1}_{\{\hat{\tau}_b>t\}}].
\end{align}

\noindent We use the convention that $R(b) = R^-(b) = 0$ for $b<0$. The strong Markov property shows that $R$ and $R^-$ are sub-additive, which implies that they grow linearly with constants $c_0, c_0^->0$ (which depend on the normalisation of the local times) i.e. $R(b) \sim c_0 b$ and $R^{-}(b) \sim c_0^- b$ as $b \to \infty$. More precisely, the renewal theorem implies that for $h>0$ we have
\begin{equation}\label{eq: renewal theorem}
\lim_{x\to\infty} \frac{R([x, x+h])}{h} = c_0, \quad \lim_{x\to\infty} \frac{R^-([x, x+h])}{h} = c^-_0,
\end{equation}

\noindent where we let $R([x,x+h])=R(x+h)-R(x)$ and $R^-([x,x+h])=R^-(x+h)-R^-(x)$.

We define the ladder height processes $H(t)=\hat{\xi}_{L^{-1}(t)}$ and $H^{-1}(t)=\hat{\xi}_{(L^-)^{-1}(t)} $. Introduce the Laplace exponent of the ascending ladder process $(L^{-1},H)$, formally given by
\begin{align}\label{eq: def kappa+}
    \exp(-\kappa^+(a,b)) = \hat E_1[\exp(-a L^{-1}(1)-b H(1))].
\end{align}
In the same way, the Laplace exponent of the descending ladder process $((L^-)^{-1}, H^{-})$ is given by
\begin{align}\label{eq: def kappa-}
    \exp(-\kappa^-(a, b)) = \hat{E}_1[\exp(-a (L^-)^{-1}(1)- b H^-(1))].
\end{align}
By \cite[Section 5, Equation (3)(4)]{bertoin1996levy}, there exists a constant $K>0$ (which depend on the normalisation of the local times), such that  for $\lambda>0$ we have
\begin{equation}\label{eq: Hopf decomp}
\kappa^+(\lambda, 0) \kappa^-(\lambda, 0) = K\lambda
\end{equation}
and
\begin{equation*}
    K\psi_{\omega_-}(\lambda) = \kappa^+(0, -i\lambda)\kappa^-(0, i\lambda).
\end{equation*}
By the renewal theorem, we have $c_0 = \hat{\mathbb{E}}[H(1)]^{-1}$ and $c_0^- = \hat{\mathbb{E}}[H^-(1)]^{-1}$. The characteristic functions satisfy $\kappa^+(0, -i\lambda) \sim -i\lambda \hat{\mathbb{E}}[H(1)]$ and $\kappa^-(0, i\lambda) \sim i\lambda \hat{\mathbb{E}}[H^-(1)]$ when $\lambda$ is small. Together with $\psi_{\omega_-}(\lambda) = \kappa(\omega_-+\lambda)\sim \frac{\kappa^{\prime\prime}(\omega_-)\lambda^2}{2}$, we obtain a relation between $c_0$, $c_0^-$ and $K$:
\begin{equation}\label{eq: relation between K and c}
    Kc_0c_0^-=\frac{2}{\kappa''(\omega_-)}.
\end{equation}
We also introduce a variant of the renewal function $R$. For $\varepsilon>0$, we define $\mathscr{V}^{\varepsilon}: \mathbb R^+\to \mathbb R^+$ by
\begin{equation}\label{eq: def V}
   \mathscr{V}^{\varepsilon}(y):=\int_0^{\infty} \hat E_1[\exp(-\varepsilon L^{-1}(t))\cdot \mathds{1}_{\{H(t)\le y\}}]\D t.
\end{equation}
We see that $\mathscr{V}^{\varepsilon}(y)\uparrow R(y)$ as $\varepsilon\downarrow 0$.

For future use, we introduce a bit of the analogous results for the random walk ${\tt S}$. Similarly, we define the weak descend ladder process $({\tt T}_k, {\tt H}_k)$ and the strict ascending ladder process $({\tt T}^+_k, {\tt H}^+_k)$ of the random walk $-{\hat{\tt S}}$. We also define the discrete renewal functions ${\tt R}$ and ${\tt R}^+$ by
\begin{equation}\label{eq: def R rw}
{\tt R}(b) = {\tt E}\bigg[\sum^{\infty}_{k=0} \mathds{1}_{\{{\tt H}_k < b\}}\bigg], \quad {\tt R}^+(b) = {\tt E}\bigg[\sum^{\infty}_{k=0} \mathds{1}_{\{{\tt H}^+_k < b\}} \bigg].
\end{equation}
\noindent By convention, ${\tt R}(b) = {\tt R}^-(b) = 0$ for $b<0$. The renewal theorem states that there exist constants ${\tt c}_0, {\tt c}_0^- > 0$ such that
\begin{equation}\label{eq: def tt c_0}
\lim_{b\to\infty} \frac{{\tt R}(b)}{b} = {\tt c}_0, \quad \lim_{b\to\infty} \frac{{\tt R}^-(b)}{b} = {\tt c}^-_0.
\end{equation}

\subsection{Length measure of the tree below a barrier}\label{sec: tree below barrier}
As we have seen in \cref{sec: length measure}, the arguments of \cite[Section 2.3]{BertoinJean2024SMta} based on first moment calculation break down in the critical case. This comes from the unusual event that the decoration on the tree reaches a high level. The solution is to impose a barrier constraint on the decoration on the tree. This method is standard for the branching random walks in the boundary case. See for example \cite{ChenXinxin2015Anas, AidekonElie2024Bodt}.

Specifically, if $\mathtt{T}$ is a decorated tree starting from $g(\rho)=x$ then for $c\geq x$, we let $\mathtt{T}^c = (T^c, d_{T^c}, \rho, g)$ be the decorated subtree obtained by pruning the tree at the first time the decoration exceeds the level $c$, namely $$T^{c} = \{x\in T: g(\llbracket \rho, x \rrbracket)\leq c\}$$ where $\llbracket\rho,x\rrbracket$ is the geodesic from $\rho$ to $x$. Since $(||f_u||)_{u\in T}$ is a null family, the tree $T^c$ is obtained by pruning $T$ at finitely many points. In particular $T^c$ is closed a.s.. We show that $\mathtt{T}^c = \mathtt{T}$ with high probability as a foundation to the trick.

\begin{lem}\label{lem: Truncation}
Under \cref{Assumption A}, we have
\begin{equation}\label{eq: Truncation}
\mathbb{Q}_1(\mathtt{T}^c = \mathtt{T}) \geq 1 - c^{-\omega_-}.
\end{equation}
\end{lem}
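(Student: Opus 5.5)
The plan is to show that $\mathbb{Q}_1(\sup_T g > c)\le c^{-\omega_-}$, since $\mathtt{T}^c\neq\mathtt{T}$ exactly when the decoration somewhere exceeds $c$. I would use a stopping-line (first-passage) argument together with the martingale property coming from $\kappa(\omega_-)=0$. In the underlying branching Lévy process $(\Xi(t,u),u\in\mathcal N_t)$, consider the stopping line $\mathcal L_c$ formed by the first time each lineage's position exceeds $\log c$; alive individuals that have not yet crossed are not on the line. Along any geodesic from $\rho$, the decoration is the Lamperti transform of the positions along the corresponding lineage. Hence $\sup_T g>c$ exactly when $\mathcal L_c\neq\emptyset$, i.e.\ some individual $u$ reaches position $\Xi(\cdot,u)>\log c$. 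This includes the case where a newborn child's type $\chi(u)=e^{\xi(s-)+y_\ell}$ already exceeds $c$, counted at its birth time.

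The key step is the stopping-line version of the many-to-one formula (\cref{lem: brl many-to-one}). It gives
\[
\mathbb{E}\Big[\sum_{u\in\mathcal L_c}\mathrm{e}^{\omega_-\Xi(u)}\Big]=\hat P(\hat\tau_{\log c}<\infty)\le 1 .
\]
Equivalently, the additive martingale $W_t=\sum_{u\in\mathcal N_t}\mathrm{e}^{\omega_-\Xi(t,u)}$ is a nonnegative martingale with $\mathbb E[W_t]=e^{t\kappa(\omega_-)}=1$ by \eqref{eq: brl cumulant}, and optional stopping along the line gives expectation at most $1$. Every $u\in\mathcal L_c$ satisfies $\mathrm e^{\omega_-\Xi(u)}\ge c^{\omega_-}$. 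Therefore
\[
c^{\omega_-}\,\mathbb P(\mathcal L_c\neq\emptyset)\le \mathbb E\Big[\sum_{u\in\mathcal L_c}\mathrm{e}^{\omega_-\Xi(u)}\Big]\le 1,
\]
which is \eqref{eq: Truncation}. If $\omega_-=0$ the bound is trivial.

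A simpler discrete alternative uses only types and \cref{lem: many-to-one}, by taking the first generation on each ray of the Ulam tree with $\chi(u)>c$. However, the decoration can exceed $c$ inside a branch without any type exceeding $c$. So for the discrete route I would also need, for each branch, the single-branch estimate $P_x(\sup f>c)\le (x/c)^{\omega_-}$ or a combined first-passage argument. This is why I prefer the continuous-time stopping line, which treats branch fluctuations and births uniformly.

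The main obstacle is justifying the optional stopping/stopping-line many-to-one rigorously. That requires checking that the first-passage line is a legitimate (optional) stopping line in the sense of \cite{BigginsJ.D.2004Mcim}, and that jumps overshooting $\log c$ only help the inequality. One also has to handle lineages that never cross, via Fatou. To do this, I would first truncate time at $t$, apply many-to-one to the individuals whose lineage first crosses before $t$, bound by $\mathbb E[W_t]=1$ using $W$'s martingale property at the stopping line, and then let $t\to\infty$ by monotone convergence.
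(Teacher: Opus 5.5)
Your proposal is correct and follows the paper's proof essentially verbatim. The paper also takes the first-passage stopping line $\mathcal{L}^c$ above $\log c$ in the branching L\'evy process, bounds $\mathbb{P}(\#\mathcal{L}^c\ge 1)\le c^{-\omega_-}\,\mathbb{E}\big[\sum_{\mathcal{L}^c}\mathrm{e}^{\omega_-\Xi}\big]$, and evaluates the expectation as $\hat P(\hat\tau_{\log c}<\infty)\le 1$ via the stopping-line many-to-one formula; your truncation at time $t$ followed by $t\to\infty$ just supplies the justification the paper leaves implicit.
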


\begin{proof}
Let $\mathcal{L}^c$ be the set of pairs $(u,t_u)$ in the branching L\'evy process $(\Xi(t,u), u\in \mathcal{N}_t)_{t\ge 0}$, where $t_u<\infty$ denotes the first time at which the process along the lineage hits $(\log c,\infty)$. Then $\mathcal{L}^c$ is a stopping line. By the many-to-one formula \eqref{eq: many-to-one}, we have
\[\mathbb{P}(\#\mathcal{L}^c\geq 1)\leq \mathbb{E}\Big[\sum_{(u, t_u)\in \mathcal{L}^c} 1\Big]\leq c^{-\omega_-}\mathbb{E}\Big[\sum_{(u, t_u)\in \mathcal{L}^c} \mathrm{e}^{\omega_- \Xi(t_u,u)}\Big] \stackrel{\mathrm{Lem.}\ref{lem: brl many-to-one}}{=} c^{-\omega_-}\hat{P}(\hat{\tau}_{\log c} < \infty) \leq  c^{-\omega_-}. \]
\end{proof}
Denote the expected total mass of the $\gamma$-length measure $\lambda^\gamma$ on $\mathtt{T}^c$ by $R_{\gamma}(c)$, that is,
\begin{equation}
R_{\gamma}(c) := \mathbb{E}\bigg[\int_{T^{c}} g(v)^{\gamma-\alpha}\D \lambda_{T}(v) \bigg].
\end{equation}
\noindent Recall the constant $K$ introduced in \eqref{eq: Hopf decomp}. We prove that $R_{\gamma}(c)$ is finite for $\gamma>\omega_-$:

\begin{prop}[$\gamma$-Length measure of the tree below a barrier]\label{prop: expected length}
Under \cref{Assumption A}, for $c>1$ and $\gamma>\omega_-$, we have
\begin{equation}\label{eq: expected length}
R_{\gamma}(c) = K\bigg(\int_{[0, \log c]}\mathrm{e}^{(\gamma-\omega_-)x}R(\D x)\bigg)\bigg(\int_{[0,\infty)}\mathrm{e}^{-(\gamma-\omega_-)x}R^-(\D x)\bigg)<\infty.
\end{equation}
\end{prop}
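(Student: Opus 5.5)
We first rewrite $R_\gamma(c)$ as an expectation over the branching L\'evy process and then reduce it to a single Lévy path by the many-to-one formula. Undoing the Lamperti time change on each branch, $\D t = \mathrm{e}^{\alpha\xi_u(s)}\D s$ and $f_u = \mathrm{e}^{\xi_u}$, so the density $g^{\gamma-\alpha}\D\lambda_T$ becomes $\mathrm{e}^{\gamma\Xi(s,u)}\D s$ in the branching L\'evy time scale. A point $v\in T$ lies in $T^c$ exactly when the decoration along the ancestral lineage of the corresponding particle has stayed at or below $c$, that is, when $\sup_{r\le s}\Xi(r,u)\le \log c$ along that lineage. This gives
\[
R_\gamma(c)=\mathbb{E}\bigg[\int_0^\infty \sum_{u\in\mathcal{N}_s}\mathrm{e}^{\gamma\Xi(s,u)}\mathds{1}_{\{\sup_{r\le s}\Xi(r,u)\le\log c\}}\D s\bigg].
\]
We then apply \cref{lem: brl many-to-one} at each fixed time $s$, in its path-functional form (the version in \cite{MalleinBastien2023Anas} allows functionals of the whole ancestral path), and use Tonelli. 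This yields
\[
R_\gamma(c)=\hat{E}\bigg[\int_0^{\hat\tau_{\log c}}\mathrm{e}^{(\gamma-\omega_-)\hat\xi_s}\D s\bigg].
\]

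The second step computes this occupation integral by Wiener--Hopf theory. Killing $\hat\xi$ at an independent exponential time $\mathrm{e}_q$ and letting $q\downarrow 0$, the Wiener--Hopf factorisation splits $\hat\xi_s$ into two pieces:
\begin{itemize}
\item the running supremum $\bar\xi_s$, whose occupation measure is governed by the ascending ladder process, giving the renewal measure $R(\D x)$;
\item the drawdown $\bar\xi_s-\hat\xi_s$, which has the law of the negated running infimum of an independent copy, governed by $R^-(\D y)$.
\end{itemize}
Precisely, for the $q$-potential one has
\[
\int_0^\infty \mathrm{e}^{-qs}\hat{E}[F(\bar\xi_s,\bar\xi_s-\hat\xi_s)]\D s=\frac{K}{\kappa^+(q,0)\kappa^-(q,0)}\cdot\kappa^+(q,0)\kappa^-(q,0)\iint F(x,y)\,\mathcal{U}_q^+(\D x)\,\mathcal{U}_q^-(\D y)
\]
with $q$-renewal measures $\mathcal{U}_q^\pm$. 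By \eqref{eq: Hopf decomp} the constant in front is $K$. This is the standard identity $\int_0^\infty\hat{P}(\bar\xi_s\in\D x,\bar\xi_s-\hat\xi_s\in\D y)\D s=K\,R(\D x)R^-(\D y)$; see \cite[Ch. VI]{bertoin1996levy}.

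Applying this identity with $F(x,y)=\mathrm{e}^{(\gamma-\omega_-)(x-y)}\mathds{1}_{\{x\le\log c\}}$ and noting that $\{\hat\tau_{\log c}>s\}=\{\bar\xi_s\le\log c\}$, we obtain exactly the product formula \eqref{eq: expected length}. For finiteness, the first factor is finite because $R$ is finite on compact sets. The second factor is finite because $R^-(x)\sim c_0^-x$ by \eqref{eq: renewal theorem}, so $\int \mathrm{e}^{-(\gamma-\omega_-)x}R^-(\D x)<\infty$ for $\gamma>\omega_-$.

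The main obstacle is justifying the $q\downarrow0$ limit in the Wiener--Hopf identity, and with it the exact normalisation $K$. Since $\hat\xi$ oscillates, $\kappa^\pm(q,0)\to0$ and we must pass to the limit inside the potential. We would first prove the identity for $q>0$, for nonnegative $F$ by excursion theory (splitting time at the last supremum before $\mathrm{e}_q$), and then pass to the limit by monotone convergence. A second, minor point is that \eqref{eq: def R levy} uses $\hat\tau_b$ rather than the strict/weak ladder convention. Possible atoms of $R$ at boundary points may require care with closed intervals, but since $\hat\xi$ has a Gaussian or at least nonatomic law at fixed times this only affects a null set.
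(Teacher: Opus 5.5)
Your proposal is correct and follows the paper's proof. The Lamperti change of variables plus the (path-functional) many-to-one formula reduce $R_\gamma(c)$ to $\hat{E}\big[\int_0^{\hat\tau_{\log c}} \mathrm{e}^{(\gamma-\omega_-)\hat\xi_s}\,\D s\big]$, which the paper evaluates by citing \cite[Lemma 20, Ch.~VI]{bertoin1996levy}; that is exactly the Wiener--Hopf potential identity you re-derive via $\mathrm{e}_q$, $\kappa^+(q,0)\kappa^-(q,0)=Kq$ and $q\downarrow 0$, and finiteness then follows from \eqref{eq: renewal theorem} as you say.
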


We deduce in particular that the measure $\lambda^{\gamma}$ is finite on $T^c$, $\mathbb{P}_1$-a.s for any $\gamma>\omega_-$. Since $\mathbb{Q}_1({\tt T}^c = {\tt T}) \to 1$ as $c\to\infty$ by \cref{lem: Truncation} we deduce that $\lambda^{\gamma}$ is a finite measure $\mathbb{P}_1$-a.s for any $\gamma > \omega_-$.

\begin{proof}
 Set $b = \log c$ and $\tau_{b}(u) = \inf\{t: \Xi(t,u) > b\}$. Recall that $\hat{\tau}_{b} = \inf\{t: \hat{\xi}_t > b\}$. We have
\[\mathbb{E}\bigg[\int_{T^{c}}g(v)^{\gamma-\alpha}\D \lambda_T(v)\bigg] \stackrel{\mathrm{Lamperti}}{=} \mathbb{E}\bigg[\int^{\infty}_{0}\bigg(\sum_{u\in {\cal N}_t} \mathrm{e}^{\gamma\Xi(t,u)}\cdot \mathds{1}_{\{\tau_b(u)>t\}} \bigg)\D t \bigg]  \stackrel{\mathrm{Lem.} \ref{lem: brl many-to-one}}{=} \hat{E}\bigg[\int^{\infty}_0 \mathrm{e}^{(\gamma-\omega_-)\hat{\xi}_t}\cdot\mathds{1}_{\{\hat{\tau}_b>t\}}\D t\bigg]. \]

\noindent By \cite[Lemma 20, Chapter VI]{bertoin1996levy}, for the constant $K>0$ introduced in \eqref{eq: Hopf decomp},
\[\hat{E}\bigg[\int^{\infty}_0 \mathrm{e}^{(\gamma-\omega_-)\hat{\xi}_t}\cdot \mathds{1}_{\{\hat{\tau}_b>t\}}\bigg] = K\bigg(\int_{[0, b]}\mathrm{e}^{(\gamma-\omega_-)x}R(\D x)\bigg)\bigg(\int_{[0,\infty)}\mathrm{e}^{-(\gamma-\omega_-)x}R^-(\D x)\bigg). \]

\noindent The limit \eqref{eq: renewal theorem} implies that the integrals on the right hand side are finite.
\end{proof}

\subsection{The harmonic measure}\label{sec: harmonic measure}

We now define the harmonic measure.  Under \cref{Assumption A}, since $\kappa(\omega_-) = 0$, the function $h(x) = x^{\omega_-}$ is a harmonic function (related to the additive martingale in the literature of branching random walks) i.e.
\begin{equation}\label{eq: harmonic function}
x^{\omega_-} = \mathbb{E}_x\bigg[\sum_{|u|=1} (\chi(u))^{\omega_-}\bigg], \quad \mbox{for all }x>0.
\end{equation}
This was used in \cite[Section 2.3.2]{BertoinJean2024SMta} to prove that the family of spreading mass
\begin{equation}\label{eq: spread mass subcritical}
\widetilde{m}_u = \lim_{n\to\infty} \sum_{|v|=n}(\chi(uv))^{\omega_-}
\end{equation}
 defines a random measure, called the harmonic measure on $ \mathtt{T}$. However, in the critical case, it is known, see \cite[Theorem 3.2]{ShiZhan2016BRWE}, that the total mass of this measure is $\widetilde{m}_\varnothing =0$. In order to define a non-degenerate harmonic measure, instead we should use here the derivative martingale
\begin{equation}
D_n = -\sum_{|u| = n} (\chi(u))^{\omega_-}\log(\chi(u)),
\end{equation} which also converges $\mathbb{P}$-a.s. to some random variable $D_{\infty}$ by \cite[Theorem 5.2]{ShiZhan2016BRWE}. To ensure positivity of the latter, we need to impose an analogue of \cite[Assumption 2.13]{BertoinJean2024SMta}:

\begin{Assump}[Critical Cram\'er condition]\label{Assumption B}
There exists $1<p_0\le 2$ and $0<\Delta_0<\gamma_1-\omega_-$ such that for $\gamma\in (\omega_--\Delta_0, \omega_- + \Delta_0)$,
\begin{align}\label{eq: assumption p moment}
\int_{\mathcal{S}_1} \mathbf{\Lambda}_1(\D y )\left(\sum_{i=1}^{\infty}\mathrm{e}^{y_i\gamma}\right)^{p_0}<\infty.
\end{align}
We also assume $\psi(\omega_-+\Delta_0)<0$ and $\psi(\omega_--\Delta_0)<0$ when $\Delta_0$ is small enough.
\end{Assump}

\begin{lem}[Definition of harmonic measure in the critical case] Under Assumptions \ref{Assumption A} and \ref{Assumption B}, under $\mathbb{P}_x$ for $x>0$, the family of spreading mass
\begin{equation}\label{eq: spread mass critical}
m_u = -\lim_{n\to\infty} \sum_{|v|=n}(\chi(uv))^{\omega_-}\log(\chi(uv))\ge 0,
\end{equation}
defines a non-trivial measure $\mu$ on the leaves of $\mathtt{T}$ called the \textbf{harmonic measure}.
\end{lem}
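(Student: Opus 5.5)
Apply the derivative martingale theory of branching random walks to every subtree. The logarithms $-\log\chi(u)$ form a branching random walk in the boundary case. Indeed, $\kappa(\omega_-)=0=\kappa'(\omega_-)$, and by \eqref{eq: Merlin transform} this gives $\mathcal{M}(\omega_-)=1$ and ${\tt E}[\hat{\tt S}_1]=0$. \cref{Assumption B} supplies the integrability needed in \cite[Theorem 5.2]{ShiZhan2016BRWE} and in the Aïdékon/Chen positivity criterion. That integrability is of the form ${\tt E}[X\log_+^2X+\tilde X\log_+ \tilde X]<\infty$ with $X=\sum_{|u|=1}\chi(u)^{\omega_-}$ and $\tilde X=\sum_{|u|=1}\chi(u)^{\omega_-}(-\log\chi(u))_+$. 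It follows from the $p_0$-moment bound \eqref{eq: assumption p moment} at $\gamma=\omega_-\pm\Delta_0/2$, combined with $\psi(\omega_-\pm\Delta_0)<0$ and the Lamperti time-change. The same argument as \cite[Lemma 9.1]{BertoinJean2024SMta} controls the reproduction along a single branch. Hence, for each $u$, the limit $m_u$ exists a.s. Conditionally on $\chi(u)$, the branching property and self-similarity give $m_u \stackrel{d}{=}\chi(u)^{\omega_-}\big(D^{(u)}_\infty-\log\chi(u)\,W^{(u)}_\infty\big)$. Here $D^{(u)},W^{(u)}$ are copies of the derivative and additive limits. Since $W_\infty=0$ a.s. in the critical case \cite[Theorem 3.2]{ShiZhan2016BRWE}, we get $m_u\stackrel{d}{=}\chi(u)^{\omega_-}D^{(u)}_\infty\ge0$. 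The same holds for every $u\in\mathbb U$ simultaneously, because $\mathbb U$ is countable. Moreover, $D_\infty>0$ a.s. under the tree's non-extinction; here $\chi(u)\neq0$ for the root, so it is a.s. positive.

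Next, verify additivity: $m_u=\sum_{i\ge1}m_{ui}$ a.s. for every $u$. Write $D_n^{(u)}=\sum_i D^{(ui)}_{n-1}$ and pass to the limit. The interchange of the limit and the sum is the delicate point, since $D_n$ is a signed martingale. I would handle it via the truncated martingale of \cite{AidekonElie2024Bodt,ShiZhan2016BRWE}, namely $D_n^{(b)}=\sum_{|v|=n}\chi(v)^{\omega_-}{\tt R}(b-\log\chi(v))\mathds 1_{\{\text{barrier not crossed}\}}$. This is a nonnegative martingale with the same limit as $D_n$ on the event that the barrier $b$ is never crossed. Its uniform integrability (under \cref{Assumption B}) lets us take limits termwise by Fatou and by equality of expectations. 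Letting $b\to\infty$ and using \cref{lem: Truncation} gives the identity a.s. Iterating over generations yields $m_u=\sum_{|v|=n}m_{uv}$ for all $n$.

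Then build the measure on $\partial T$. To each $u$ associate the closed set $T_u\cap\partial T$ of leaves above the gluing point of branch $u$ (the subtree generated by $u$ and its descendants). Define $\mu$ on the algebra generated by these sets through $\mu(\text{leaves of }T_u)=m_u$. Additivity holds by the previous step. Carathéodory extension then follows, as in \cite[Section 2.3.2]{BertoinJean2024SMta}, using compactness of $\mathtt T$ (\cref{prop: construct ssMt}) and the null-family property. Concretely, map $\partial\mathbb U$ onto the leaves via the limit of branches, which is possible by the second condition of \eqref{eq: condition 1}. Then define $\mu$ as the push-forward of the measure on $\partial\mathbb U$ with cylinder masses $m_u$; that measure exists by Kolmogorov consistency. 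One must also check that no mass is put on the skeleton. This uses $\sup_{|v|=n}\chi(v)\to0$, so that mass is carried by ends $\bar u$ with infinitely many generations, which glue to leaves.

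Finally, $\mu(\partial T)=m_\varnothing=D_\infty>0$, so $\mu$ is non-trivial. The main obstacle is the justification of the term-by-term limit and the moment conditions from \cref{Assumption B}. I expect the truncated (barrier) martingale argument to be the crux.
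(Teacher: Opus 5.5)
Your proposal is correct and follows the paper's route: moment bounds from \cref{Assumption B}, convergence and positivity of $D_\infty$ from boundary-case branching random walk theory, and the spreading-mass construction of \cite[Section 1.3]{BertoinJean2024SMta}. The one real difference is that you prove the additivity $m_u=\sum_{i\ge1}m_{ui}$ yourself, via the barrier-truncated martingale (Fatou, equality of expectations by uniform integrability, then $b\to\infty$ using \cref{lem: Truncation}), whereas the paper cites \cite[Lemma 6.1]{buraczewski2021derivative}; your argument is the mechanism the paper itself uses in \cref{prop: derivative martingale} to identify $\mu^c$ with $c_0\mu$ on $T^c$.

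One small correction: $D_\infty>0$ holds a.s.\ only on survival of the branching random walk, and this does not follow from $\chi(\varnothing)=x>0$, since an individual may be killed before giving birth and extinction can then have positive probability. Non-triviality of $\mu$ only needs $\mathbb{P}_x(D_\infty>0)>0$, so the conclusion is unaffected.
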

\begin{proof}
\noindent Under \cref{Assumption B}, for $1<p\leq p_0$ and $\gamma\in (\omega_--\Delta_0, \omega_- + \Delta_0)$, by \cite{IksanovAlexander2019Arop}, we have $\mathbb{E}_1[(\sum^{\infty}_{u=1}\chi(i)^{\gamma})^p]< \infty.$
\noindent By the convexity of $\psi$, there exist $1\le p<2$ and $C > 0$, for $\gamma\in  (\omega_--\Delta_0, \omega_- + \Delta_0)$,
\begin{equation}\label{eq: uniform p moment}
\psi(p\gamma)<0\quad \mbox{and}\quad \mathbb{E}\left[\left(\sum^{\infty}_{u=1}\chi(i)^{\gamma}\right)^{p}\right] \leq C<\infty.
\end{equation}

By \cite{ChenXinxin2015Anas}, we see that under \cref{Assumption A} and \cref{Assumption B}, $D_{\infty}$ is positive $\mathbb{P}_x$-a.s..

\noindent By \cite[Lemma 6.1]{buraczewski2021derivative}, we have a.s. $m_u=\sum_{i\geq 1} m_{ui}$. Therefore, by \cite[Section 1.3]{BertoinJean2024SMta}, the family of spreading mass $(m_u)_{u\in\mathbb{U}}$ gives rise to a harmonic measure $\mu$ supported on $\partial T$.

\end{proof}

As for the length measures, in general we have $\mathbb{E}[\mu(T)] = \mathbb{E}[D_{\infty}] = \infty$. We will see that when restricted to the truncated tree $\mathtt{T}_c$ the harmonic measure also has finite expectation. To this end, we introduce the truncated derivative martingale $(D^c_n)_n$, whose limit equals $D_{\infty}$ restricted on $T^c$ up to a multiplicative constant. Recall the renewal function $R$ defined in \cref{sec: fluctuations}.  For $u\in\mathbb{U}$, let $\rho_u$ be the root of the decorated branches $(f_u, \eta_u)$. For $c>x$, we set
\begin{equation}\label{eq: truncated derivative martingale}
D^c_n = \sum_{|u|=n} (\chi(u))^{\omega_-}\cdot  R(\log c-\log\chi(u))\cdot\mathds{1}_{\{g(\llbracket \rho, \rho_u \rrbracket)\leq c\}}.
\end{equation}

\begin{prop}[Harmonic measure of the tree below a barrier] \label{prop: derivative martingale}
For each $c>x$, we have $ \mathbb{E}_x\left[D^c_1\right] = x^{\omega_-}R(\log c-\log x)$ and thus $(D^c_n)_n$ is a non-negative martingale. As a consequence, the family of spreading mass
\begin{equation}\label{eq: spread mass critical truncated}
m_u^{(c)} = \mathds{1}_{\{X(\llbracket \rho, \rho_u \rrbracket)\leq c\}}\cdot\lim_{n\to\infty} \sum_{|v|=n} (\chi(uv))^{\omega_-} R(\log c-\log\chi(uv))\cdot\mathds{1}_{\{X(\llbracket \rho_u, \rho_{uv}\rrbracket)\leq c\}}
\end{equation}
defines a non-trivial measure $\mu^c$ on the leaves of $\mathtt{T}$ which is exactly $c_0 \mu$ restricted to $T^c$.
\end{prop}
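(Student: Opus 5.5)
The plan is to compute $\mathbb{E}_x[D^c_1]$ through the continuous-time picture and the stopping-line many-to-one formula, and then to identify the limit of $D^c_n$ with the limit of the derivative martingale. By self-similarity it suffices to take $x=1$ and $b=\log c>0$. The first generation $\{|u|=1\}$ sits on the stopping line of birth times. The constraint $g(\llbracket\rho,\rho_u\rrbracket)\le c$ means that the ancestral Lévy path $\xi_\varnothing$ stays below $b$ up to the birth time $s_u$. I therefore write
\[
\mathbb{E}[D^c_1]=E\Big[\sum_{s}\mathds{1}_{\{\sup_{[0,s)}\xi\le b\}}\,\mathrm{e}^{\omega_-\xi(s-)}\int \boldsymbol{\Lambda}_1(\D\mathbf y)\sum_i \mathrm{e}^{\omega_- y_i}R(b-\xi(s-)-y_i)\Big]
\]
and apply the compensation formula for $\mathbf N_1$. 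This gives $\int_0^\infty \D t\, E[\mathrm{e}^{\omega_-\xi_t}\mathds{1}_{\{t<\hat\tau^\xi_b\}}G(b-\xi_t)]$, where $G(z)=\int\boldsymbol\Lambda_1(\D\mathbf y)\sum_i \mathrm{e}^{\omega_-y_i}R(z-y_i)$.

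Next I change measure along the ancestral line. The exponential tilt $\mathrm{e}^{\omega_-\xi_t-t\psi(\omega_-)}$ turns $\xi$ into a Lévy process $\xi^{*}$ with exponent $\psi(\omega_-+\cdot)-\psi(\omega_-)$. Since $\kappa(\omega_-)=0$, the factor $\mathrm{e}^{t\psi(\omega_-)}$, with $\psi(\omega_-)<0$, acts as a killing at rate $-\psi(\omega_-)=\int\boldsymbol\Lambda_1\sum_i \mathrm{e}^{\omega_-y_i}$. The spine process $\hat\xi$ of \cref{lem: brl many-to-one} is exactly $\xi^*$ with additional jumps $y_i$ chosen at rate $\boldsymbol\Lambda_1(\D\mathbf y)\mathrm{e}^{\omega_- y_i}$; this is the decomposition in \cite[Lemma 5.5]{BertoinJean2024SMta}. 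In these terms, $\mathbb{E}[D^c_1]$ equals $\hat E[R(b-\hat\xi_{T})\mathds 1_{\{\hat\tau_b> T-\}}]$, where $T$ is the first ``reproduction-type'' jump of the spine and $R(\cdot)=0$ on negative arguments encodes the barrier at the jump itself. Because $T$ is a stopping time of the marked spine, the optional-stopping version of the harmonicity \eqref{eq: renewal martingale} gives the value $R(b)$. Harmonicity first holds at fixed times and extends to $T$ since $R(b-\hat\xi_{t\wedge T})\mathds 1_{\{\hat\tau_b>t\wedge T\}}$ is a martingale. The main obstacle is justifying passage to $t\to\infty$: on $\{T=\infty\}$ one needs $R(b-\hat\xi_t)\mathds 1_{\{\hat\tau_b>t\}}\to0$ in $L^1$. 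Here $P(T>t)=\mathrm{e}^{-t|\psi(\omega_-)|}$ decays exponentially while $R$ grows linearly and $\hat\xi_t$ has a finite second moment, so Cauchy--Schwarz should handle it. Together with the branching property, this shows that $(D^c_n)$ is a nonnegative martingale.

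It remains to identify the limit. The identity $m^{(c)}_u=\sum_i m^{(c)}_{ui}$ follows from the martingale property applied to each subtree, exactly as in \cite[Section 1.3]{BertoinJean2024SMta}, so $(m^{(c)}_u)$ defines a measure $\mu^c$. To compare it with $c_0\mu$ on $T^c$, I use $R(y)\sim c_0y$ together with $\max_{|v|=n}\chi(v)\to0$, which gives $-\log\chi(uv)\to\infty$ uniformly, as in \cite[Lemma 3.1]{ShiZhan2016BRWE}. This yields $R(\log c-\log\chi(uv))=c_0(-\log\chi(uv))(1+o(1))+O(\log c)$. The error term contributes $O(\log c)\,W_n\to0$, because the additive martingale vanishes. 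The indicator $\mathds1_{\{X\le c\}}$ only removes individuals whose ancestral line left $T^c$, and by nullity of the family only finitely many prunings occur. The surviving sum is therefore $c_0$ times the truncated derivative martingale of the subtrees inside $T^c$, which converges to $c_0\mu$ on $T^c$. This gives $\mu^c=c_0\mu|_{T^c}$, and nontriviality follows from $D_\infty>0$ combined with \cref{lem: Truncation}.
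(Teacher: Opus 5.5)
Your proof is correct in substance and has the same skeleton as the paper's. Both go: first-moment identity from the harmonicity \eqref{eq: renewal martingale} of $R$, martingale property by branching, spreading masses, then identification with $c_0\mu$ via $R(y)\sim c_0 y$, $W_n\to 0$ and $\max_{|v|=n}\chi(v)\to 0$.

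The difference is in how you obtain $\mathbb{E}[D^c_1]=R(b)$. The paper applies the many-to-one formula for the branching L\'evy process at a fixed time $t$, sorts the particles alive at $t$ by their first-generation ancestor, and lets $t\to\infty$. You stay on one line of descent: compensation formula, Esscher tilt, and $\hat\xi$ written as the tilted ancestor plus independent reproduction jumps. You then stop $R(b-\hat\xi_t)\mathds{1}_{\{\hat\tau_b>t\}}$ at the first reproduction jump $T$. The two are the same identity: your remainder $\hat E[R(b-\hat\xi_t)\mathds{1}_{\{\hat\tau_b>t\}}\mathds{1}_{\{T>t\}}]$ is exactly the paper's ancestor term $E[\mathrm{e}^{\omega_-\xi_t}R(b-\xi_t)\mathds{1}_{\{\tau_b>t\}}]$.

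Your way of killing this term, the tail $\mathrm{e}^{-t|\psi(\omega_-)|}$ against $\hat E[\hat\xi_t^2]=t\kappa''(\omega_-)$, is cleaner than the paper's. There, the bound $R(b-\xi_t)\le C\mathrm{e}^{\delta(b-\xi_t)}$ actually produces $\mathrm{e}^{(\omega_--\delta)\xi_t}$, not $\mathrm{e}^{(\omega_-+\delta)\xi_t}$. So one needs $\psi(\omega_--\delta)<0$, which is true by convexity but is not what is written. In exchange, the paper's route uses only fixed-time harmonicity and the many-to-one formula already at hand, with no optional stopping.

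The step you pass over too quickly is the additivity $m^{(c)}_u=\sum_i m^{(c)}_{ui}$. The martingale property holds at each finite $n$; the real issue is letting $n\to\infty$ through an infinite sum over children, which is not automatic. The paper handles this by proving that $(D^c_n)$ is uniformly integrable. It dominates $R$ by a multiple of the discrete renewal function ${\tt R}$ and uses Shi's Lemma 5.5 on the truncated derivative martingale of the branching random walk $(-\log\chi(u))$. Then Fatou's inequality $m^{(c)}_u\ge\sum_i m^{(c)}_{ui}$ becomes an equality because both sides have the same mean.

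You can do this step without uniform integrability. Set $L_N:=m^{(c)}_u-\sum_{i\le N}m^{(c)}_{ui}$, which is nonnegative and nonincreasing in $N$. Conditional Fatou bounds $\mathbb{E}[L_N\mid\mathcal{F}_{|u|+1}]$ by $\sum_{i>N}\chi(ui)^{\omega_-}R(\log c-\log\chi(ui))\mathds{1}_{\{ui\in\mathbb{U}^c\}}$. This is the tail of an a.s.\ finite sum, so $L_N\downarrow 0$ a.s. With this, and your route to non-triviality ($D_\infty>0$, \cref{lem: Truncation} and the identification), the statement as written is proved.

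What you do not obtain is the mean identity $\mathbb{E}_x[\mu^c(T)]=x^{\omega_-}R(\log c-\log x)$, i.e.\ the $L^1$ convergence of $D^c_n$. The paper establishes this here and relies on it later: to normalise $\tilde{\mathbb{Q}}^c_x$ in the spinal decomposition, and in the proof of \cref{lem: convergence total mass}.
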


\begin{proof}
Set $b = \log c$. We first prove $\mathbb{E}_x\left[D^c_1\right] = x^{\omega_-}R(\log c-\log x)$. Recall the branching L\'evy process $(\Xi(t,u), u\in\mathcal{N}_t)_{t\geq 0}$ constructed in \cref{sec: tools from branching processes} and  the hitting times $\tau_b(u) = \inf\{t: \Xi(t,u) > b\}$ for $u \in \mathbb{U}$. For $t>0$,
\begin{equation}\label{eq: renewal martingale in brl}
\mathbb{E}[\sum_{u\in \mathcal{N}_t} \mathrm{e}^{\omega_-\Xi(t,u))}R(b - \Xi(t,u)) \cdot \mathds{1}_{\{\tau_b(u) > t\}} ] \stackrel{\mathrm{Lem. }\ref{lem: brl many-to-one}}{=} x^{\omega_-}\hat{E}[R(b - \hat{\xi}_t) \cdot \mathds{1}_{\{\hat{\tau}_b > t\}}] \stackrel{\eqref{eq: renewal martingale}}{=} R(b).
\end{equation}
\noindent For $i\in\mathbb{N}^*$ we write $\mathcal{N}_t(i)$ for the set of $u\in\mathcal{N}_t$ such that $i$ is an ancestor of $u$ (possibly $i=u$) in the Ulam tree. We apply the display above to each $i$ if its birth time $b(i)\leq t$. Condition on $(\xi_{\varnothing}, N_{\varnothing})|_{[0, t]}$, we have
\begin{align*}
&\mathbb{E}\left[\sum_{u\in \mathcal{N}_t(i)} \mathrm{e}^{\omega_-\Xi(t,u))}R(b - \Xi(t,u)) \cdot \mathds{1}_{\{\tau_b(u) > t\}} \Big | (\xi_{\varnothing}, N_{\varnothing})|_{[0, t]}\right] \\
&\,=(\chi(i))^{\omega_-}R(b - \log(\chi(i)))\cdot \mathds{1}_{\{\tau_b(\varnothing)>b(i)\}} \cdot \mathds{1}_{\{b(i)\leq t\}}.
\end{align*}
Summing in $i$, we get
\begin{multline*}
\mathbb{E}\left[\sum_{u\in \mathcal{N}_t} \mathrm{e}^{\omega_-\Xi(t,u))}R(b - \Xi(t,u)) \cdot 1_{\{\tau_b(u) > t\}}  \Big | (\xi_{\varnothing}, N_{\varnothing})|_{[0, t]}\right]\\
= \mathrm{e}^{\omega_-\xi_{\varnothing}(t)}R(b -\xi_{\varnothing}(t))\cdot \mathds{1}_{\{\tau_b(\varnothing)>t\}} + \sum_{t_i < t} (\chi(i))^{\omega_-}R(b - \log(\chi(i)))\cdot \mathds{1}_{\{\tau_b(\varnothing)>b(i)\}}\cdot \mathds{1}_{\{b(i)\leq t\}}.
\end{multline*}
\noindent Taking expectation on both sides, the left hand side equals $x^{\omega_-}R(b-\log x)$ by \eqref{eq: renewal martingale in brl}. For the right hand side, we let $t\to\infty$ and show that the expectation of the summation increases to $\mathbb{E}[D_1^c]$. It suffices to show the expectation on the first term goes to $0$. Recall in \cref{Assumption A}, there exists $\gamma_1 > \omega_-$, such that $\psi(\gamma_1)<0$. By \eqref{eq: renewal theorem}, there exists $0<\delta<\gamma_1-\omega_-$ such that $R(x)\leq C \mathrm{e}^{\delta x}$. Thus
\[
E_x[\mathrm{e}^{\omega_-\xi_{\varnothing}(t)}R(b -\xi_{\varnothing}(t))\cdot \mathds{1}_{\{\tau_b(\varnothing)>t\}}] \leq CE_x[\mathrm{e}^{(\omega_- + \delta)\xi_{t}}] = C\mathrm{e}^{t\psi(\omega_- + \delta)} \xrightarrow[]{t\to\infty} 0
\]
\noindent since $\psi$ is convex and $\psi(\gamma)<0$ for $\gamma\in(\omega_-, \gamma_1)$.

It follows from the branching property and the above calculations that $(D^c_n)_{n\geq 0}$ is a non-negative martingale under $\mathbb{P}_x$ and thus converges a.s. Let us prove that the convergence is also in $L^1(\mathbb{P}_x)$. Using the asymptotics \eqref{eq: renewal theorem}, \eqref{eq: def tt c_0} and the fact ${\tt R}(0) > 0$, there exists a constant $C>0$ such that $R(x) \leq C{\tt R}(x)$. Therefore, $D^c_n$ is controlled by $C \sum_{|u| = n} (\chi(u))^{\omega_-} {\tt R}(\log c -\log(\chi(u)))\mathds{1}_{\{\chi(u_i)\leq c, i\leq |u|\}}$, which is uniformly integrable by \cite[Lemma 5.5]{ShiZhan2016BRWE}. Thus $D^c_n$ is uniformly integrable and thus converges in  $L^1(\mathbb{P}_x)$.

The mass $m^{(c)}_u$ in \eqref{eq: spread mass critical truncated} is now well-defined. By Fatou's lemma, $m^{(c)}_u\geq \sum_{i\geq 1} m^{(c)}_{ui}$. Taking expectation on both sides, we see the inequality is in fact an equality $\mathbb{P}_x$-a.s. since they have the same expectation. By \cite[Section 1.3]{BertoinJean2024SMta} again, the family of spreading mass $(m^{(c)}_u)_{u\in\mathbb{U}}$ induces a measure $\mu^c$ with $\mathbb{E}_x[\mu^c(T)]= x^{\omega_-}R(\log c - \log x) < \infty$ supported on $\partial T^c$. We call this measure $\mu^c$ the truncated harmonic measure.

We next show the measure $\mu^c$ and $c_0\mu$ restricted to $T^c$ are equal by a standard argument in branching random walks. By \cite[Lemma 3.1]{ShiZhan2016BRWE},  $M_n:=-\sup_{|u|=n}\log\chi(u)\to\infty$. For a fixed $c>0$, by \eqref{eq: renewal theorem}, $\mathbb{P}_1$-a.s., there exists $C>0$, for $y\geq 0$, $|R(y + \log c) - R(y)|\leq C|\log c|$ . As $n\to\infty$, we have
\begin{align*}
\sum_{|u|=n}(\chi(u))^{\omega_-}
\big|R(\log c-\log\chi(u))-R(-\log\chi(u))\big|\cdot
\mathds{1}_{\{X(\llbracket \rho,\rho_u \rrbracket)\le c\}}
\le C\,|\log c|\,W_n \longrightarrow 0
\end{align*}
and
\begin{align*}
\sum_{|u|=n}(\chi(u))^{\omega_-}
\big|R(-\log\chi(u))+c_0\log\chi(u)\big|
\cdot
\mathds{1}_{\{X(\llbracket \rho,\rho_u \rrbracket)\le c\}}
\le \sup_{x\ge M_n}
\Big|\frac{R(x)}{x}-c_0\Big|\, D_n
\longrightarrow 0.
\end{align*}
We see that the total variation between the families of spreading mass \eqref{eq: spread mass critical} and \eqref{eq: spread mass critical truncated} converges to $0$ when restricted to $T^c$, whence $\mu^c$ is the same as $c_0\mu$ restricted to $T^c$.

\end{proof}

\subsection{The Hausdorff dimension of the leaves}\label{sec: hausdorff dimension}

In this section, we compute the Hausdorff dimension of $\mathtt{T}$ as stated in \cref{thm: construct ssMt}. By self-similarity, it suffices to work under $\mathbb{Q}_1$. The approximation argument in \cref{sec: approximation} applies directly to the lower bound of Hausdorff dimension:

\begin{proof}[Proof of lower bound in \cref{thm: construct ssMt}. ]
Recall the construction of ${\tt T}^{i, \varepsilon}$ in \cref{prop: approximate critical from subcritical}. For ease of notation we fix $i = 2$ (the proof goes through for $i=1$ or $i = 3$ when $\Bd\Lambda$ is non-trivial) and omit the superscript $i$. We calculate that $\kappa^{\varepsilon}=\kappa-\epsilon$ by the explicit characteristic quadruplet in \cref{prop: approximate critical from subcritical}. When $\varepsilon>0$ is small enough, there exists $\omega^{\varepsilon}_-$ such that $\kappa^{\varepsilon}(\omega^{\varepsilon}_-) = 0$ and $\omega_-^{\varepsilon}\uparrow \omega_-$ as $\varepsilon\downarrow 0$. As a stronger assumption where we assume $L^p$ boundedness over an interval, \cref{Assumption B} implies \cite[Assumption 2.13]{BertoinJean2024SMta} ($L^p$ boundedness at a point). Applying \cite[Proposition 6.14]{BertoinJean2024SMta}, the Hausdorff dimension of $\partial{\tt T}^{\varepsilon}$ is $\omega_-^{\varepsilon}/\alpha$, $\mathbb{Q}_1$-a.s.. Since $p^{\varepsilon}: {\tt T}\to {\tt T}^{\varepsilon}$ is $1$-Lipschitz, for $d\in(0, \infty)$, the $d$-dimensional Hausdorff measure of $\partial T$ is no less than that of $\partial T^{\varepsilon}$. Thus $\dim_{H}(\partial T)\geq \omega_-^{\varepsilon}/\alpha$. The lower bound is established since $\omega_-^{\varepsilon}\uparrow \omega_-$ as $\varepsilon\downarrow 0$.
\end{proof}

In the subcritical case, the upper bound on the Hausdorff dimension is a consequence of $\mathrm{Height}(T)^{\gamma/\alpha}\in L^1(\mathbb{P})$ (see \cite[Lemma 2.6]{BertoinJean2024SMta}). We prove below the analogous estimate for the truncated tree in the critical case. The proof is inspired by the arguments of Aïdékon, Hu, and Shi \cite{AidekonElie2024Bodt}. The main idea is to estimate the local time of types staying inside intervals.

\begin{lem}\label{lem: height truncated tree}
Under \cref{Assumption A}, there exists a constant $C>0$ such that for any $c>0$ and any $\gamma\in(\omega_-, \gamma_1)$,
\begin{equation}
\mathbb{E}\Big[\mathrm{Height}(T^c)^{\gamma/\alpha}\Big]\leq C\cdot c^{(\gamma-\omega_-)}.
\end{equation}
\end{lem}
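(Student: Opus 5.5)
Since $\gamma/\alpha$ may exceed $1$, the natural plan is to bound the height of $T^c$ by the supremum over rays of a sum of branch lengths, and then control that supremum by a first moment over all individuals, weighted in the right way. For a leaf or ray $\bar u\in\partial\mathbb U$ whose lineage stays in $T^c$, the height is at most $\sum_{n\ge0} z_{\bar u(n)}\mathds 1_{\{\text{lineage of }\bar u(n)\text{ below }c\}}$. Let $\beta:=\gamma/\alpha$. For $\beta\le1$ I use subadditivity. For $\beta>1$ I use a Hölder split with summable weights: $z_{\bar u(n)}=\chi(\bar u(n))^{\alpha}Y_{\bar u(n)}$, and I weight each term by a factor depending on the gap $\log c-\log\chi(\bar u(n))$. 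This is the trick of Aïdékon--Hu--Shi, where one counts how long types stay in intervals below the barrier. Concretely, I partition the individuals $u$ (with lineage below $c$) according to $k=\lfloor \log c-\log\chi(u)\rfloor\ge0$. Along any single ray, let $N_k(\bar u)$ be the number of generations spent in the layer $[k,k+1)$. Then
\[\mathrm{Height}(T^c)\le \sup_{\bar u}\sum_{k\ge0} (c\,\mathrm e^{-k})^{\alpha}\sum_{n:\,\bar u(n)\in\text{layer }k} Y_{\bar u(n)}.\]

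The key steps, in order, are as follows. First, using $\sup_{\bar u}\sum_k a_k(\bar u)\le \sum_k \sup_{\bar u} a_k(\bar u)$ and Minkowski (or $(\sum a_k)^\beta\le C_\theta\sum_k \mathrm e^{\theta k}a_k^\beta$ for a small $\theta>0$ when $\beta>1$), I reduce to bounding $\mathbb E[\sup_{\bar u}(\sum_{n\in \text{layer }k}Y_{\bar u(n)})^{\beta}]$. I bound the supremum brutally by a sum over individuals: $\sup_{\bar u}(\cdot)^\beta\le \sum_{u\text{ in layer }k}(\text{stuff})$. Alternatively, and more cleanly, I use the crude bound $\mathrm{Height}^\beta\le\big(\sum_{u\in T^c} z_u\big)^{\beta}$ restricted per layer. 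Second, I compute first moments via the stopping-line many-to-one formula (\cref{lem: many-to-one}). Writing $\mathds 1=\chi(u)^{\omega_-}\chi(u)^{-\omega_-}$ and $\chi(u)^{-\omega_-}=c^{-\omega_-}\mathrm e^{\omega_- k}$ up to constants, the expected number of individuals in layer $k$ with lineage below the barrier is
\[c^{-\omega_-}\mathrm e^{\omega_- k}\,{\tt E}\Big[\sum_n \mathds 1_{\{\log c+{\hat{\tt S}}_n\in[k,k+1),\ \min_{i\le n}(\log c+{\hat{\tt S}}_i)\ge0\}}\Big]\le C c^{-\omega_-}\mathrm e^{\omega_-k}(1+k)\,{\tt R}(\log c)\]
by the standard renewal/ballot estimates for the walk killed at the barrier. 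Combined with $\mathbb E[Y^{\beta}]<\infty$ (since $\psi(\gamma)<0$ for $\gamma<\gamma_1$, \cite[Lemma 9.1]{BertoinJean2024SMta}) and the weight $(c\mathrm e^{-k})^{\gamma}$, I get $\sum_k c^{\gamma-\omega_-}\mathrm e^{-(\gamma-\omega_-)k}(1+k)\,\mathrm{poly}$. This is finite because $\gamma>\omega_-$ and gives the order $c^{\gamma-\omega_-}$.

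The main obstacle is the factor ${\tt R}(\log c)\sim{\tt c}_0\log c$ (and similar polynomial factors), which the stated bound $C c^{\gamma-\omega_-}$ does not allow. I would first try to remove it by starting from $\chi(\varnothing)=1$ and using the exact harmonic computation of \cref{prop: expected length} in its discrete version, where ${\tt E}[\sum_n \mathds 1_{\{\cdot\}}]$ starting at height $\log c$ below the barrier is bounded by $C(1+k)$ uniformly in the starting point, via the renewal theorem for the walk conditioned to stay below. If logarithms persist, I would absorb them by slightly perturbing the exponent, replacing $\gamma$ by $\gamma'\in(\omega_-,\gamma)$ in the layer weights. The second delicate point is passing from a supremum over rays of $\beta$-th powers of sums to sums over individuals when $\beta>1$. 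There I would use Hölder with the geometric weights $\mathrm e^{\theta k}$ across layers and, within a layer, the bound $(\sum_{n}Y_n)^\beta\le N_k^{\beta-1}\sum_n Y_n^\beta$. I would control $N_k$ through exponential tails of the occupation time of a layer for the conditioned walk.
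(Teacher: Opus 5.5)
\textbf{Case $\beta=\gamma/\alpha\le 1$.} Your plan is essentially the paper's. Subadditivity reduces everything to $\mathbb{E}[\sum_{u\in\mathbb{U}^c}z_u^{\beta}]=E_1[z_\varnothing^{\beta}]\,\mathbb{E}[\sum_{u\in\mathbb{U}^c}\chi(u)^{\gamma}]$. The factor $\mathtt{R}(\log c)$ you worry about only comes from a wasteful Green's function bound. The expected number of visits of the walk killed at the barrier to the layer at distance $k$ below it is $O(1+k)$, uniformly in the starting point, because each visit is the last one with probability $\gtrsim 1/(k+2)$ (gambler's ruin). This is exactly the geometric bound on returns used in the paper, and it is your first proposed fix.

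\textbf{Case $\beta>1$: this is the genuine gap.} This is where the difficulty of the lemma lies, and neither of your routes closes it.

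Your ``cleaner'' route bounds the layer-$k$ contribution by $(\sum_{u\in\text{layer }k}z_u)^{\beta}$, and this cannot work. By Jensen its expectation is at least $(\mathbb{E}\sum_{u\in\text{layer }k}z_u)^{\beta}$. Now $\mathbb{E}\sum_{u\in\text{layer }k}z_u$ is of order $(c\mathrm{e}^{-k})^{\alpha-\omega_-}$ times the Green's function of the killed walk at that layer. This does not decay in $k$ when $\alpha\le\omega_-$ (e.g.\ Example~\ref{ex: 1}, where $\alpha=1$, $\omega_-=2$), so the sum over layers diverges. Moreover, Assumption~\ref{Assumption A} gives no control of $\beta$-th moments of layer populations.

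Your other route uses $(\sum_n Y_n)^{\beta}\le N_k^{\beta-1}\sum_n Y_n^{\beta}$, which attaches to each individual the total occupation $N_k(\bar u)$ of the whole ray. That quantity depends on the ray's future. Once you pass the supremum over rays to a sum over individuals, you must control $\sup_{\bar u\succ u}N_k(\bar u)$, a maximum over the descendants of $u$, jointly with $Y_u$; the two are correlated, since both depend on the branch of $u$. Tail bounds for the occupation time of a single conditioned walk are what the many-to-one formula provides. They do not by themselves handle this population maximum or this correlation, so the step does not close as written.

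\textbf{The missing idea.} Split by the visit number as well as by the layer.
\begin{itemize}
\item Let $\mathcal{L}^k_n$ be the stopping line of individuals, with lineage below $c$, making their $n$-th visit to layer $k$. Every ray meets each $\mathcal{L}^k_n$ at most once, so $\mathrm{Height}(T^c)\le\sum_{k,n}\sup_{u\in\mathcal{L}^k_n}z_u$, with no sum left inside the individual terms.
\item Minkowski in $L^{\beta}$ over the countable index $(k,n)$ then needs only the first-moment bound $\mathbb{E}[\sup_{u\in\mathcal{L}^k_n}z_u^{\beta}]\le E_1[z_\varnothing^{\beta}]\,\mathbb{E}[\sum_{u\in\mathcal{L}^k_n}\chi(u)^{\gamma}]\le C c^{\gamma-\omega_-}\mathrm{e}^{-(\gamma-\omega_-)k}(1-C'/(k+2))^{n-1}$. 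This uses that $z_u$ is independent of the ancestral types defining the stopping line.
\item The geometric decay in $n$ survives the $1/\beta$-th root: $\sum_{n\ge1}(1-C'/(k+2))^{(n-1)/\beta}\le\beta(k+2)/C'$, which is absorbed by $\mathrm{e}^{-(\gamma-\omega_-)k/\beta}$.
\end{itemize}

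Your layer-wise scheme can be repaired in the same spirit by weighting with the visit index, which is determined by the ancestry, instead of $N_k$. Hölder gives $(\sum_{n\ge1}Y_n)^{\beta}\le C_{s,\beta}\sum_{n\ge1}n^{s\beta}Y_n^{\beta}$ for any $s>1-1/\beta$. The supremum over rays is then at most $\sum_n n^{s\beta}\sum_{u\in\mathcal{L}^k_n}Y_u^{\beta}$. Finally, $\sum_n n^{s\beta}(1-C'/(k+2))^{n-1}$ is polynomial in $k$, hence absorbed by your factor $\mathrm{e}^{-(\gamma-\omega_--\theta)k}$ once $\theta<\gamma-\omega_-$.
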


\begin{proof}
Let $\mathbb{U}^c = \{u\in\mathbb{U}: g(\llbracket \rho, \rho_u \rrbracket)\leq c\}$. Without loss of generality, we may assume that $c = \mathrm{e}^{b}$ where $b$ is an integer. By definition of $T^c$,
\[\mathrm{Height}(T^c) = \sup_{v\in T^c} d(\rho, v) \leq \sup_{u\in\mathbb{U}^c} \sum^{|u|}_{i=0} z_{u_i}. \]

\noindent For an integer $k\geq -b$ and $u\in\mathbb{U}$, define the local time of $ (\chi(u_i))_{i\le |u|}$ spent in the interval $(e^{-k-1},e^k)$ by
\[N^k_{u} = \sum^{|u|}_{i=0} \mathds{1}_{\{\mathrm{e}^{-k-1}< \chi(u_i)\leq \mathrm{e}^{-k}\}}.\]
\noindent For $n\geq 1$, define the stopping lines
\begin{equation}\label{eq: def stopping line Lkn}
\mathcal{L}^{k}_n = \{u\in\mathbb{U}: N^k_u = n, N^{k}_{u-} = n-1; \chi(u_i) \leq c,\forall 1\leq i\leq |u|\}.
\end{equation}
For every $u\in\mathbb{U}^c$, for each $0\leq i\leq |u|$ there exist unique $k$ and $n$ such that $u_i\in \mathcal{L}^{k}_n$. Hence
\begin{align}\label{eq: height as sum}
    \mathrm{Height}(T^c) \leq \sup_{u\in\mathbb{U}^c}
 \sum^{|u|}_{i=0} z_{u_i}\leq \sum^{\infty}_{k=-b}\sum^{\infty}_{n=1} \sup_{u\in\mathcal{L}^k_n} z_u.
\end{align}

\noindent By strong Markov property and then the self-similarity, for each $k$ and $n$,
\[\mathbb{E}\bigg[\sup_{u\in\mathcal{L}^k_n} z_u^{\gamma/\alpha}\bigg] \leq \mathbb{E}\bigg[\sum_{u\in\mathcal{L}^k_n} z_u^{\gamma/\alpha}\bigg] = \mathbb{E}\bigg[\sum_{u\in\mathcal{L}^k_n} E_{\chi(u)}[z_{\varnothing}^{\gamma/\alpha}]\bigg]\leq E_1[z_{\varnothing}^{\gamma/\alpha}]\mathbb{E}\bigg[\sum_{u\in\mathcal{L}^k_n}\chi(u)^{\gamma}\bigg]. \]
\noindent The technical lemma \cite[Lemma 9.1]{BertoinJean2024SMta} implies that $E[z_{\varnothing}^{\omega_-/\alpha}]\vee E[z_{\varnothing}^{\gamma_1/\alpha}]<\infty$. By Jensen's inequality, we have $C_1: = \sup_{\gamma\in(\omega_-, \gamma_1)}E[z_{\varnothing}^{\gamma/\alpha}]<\infty$. Define ${\tt T}^k_n = \inf\{m\ge 0: \Sigma^k_m\geq n, {\hat{\tt S}}_i\geq - b \mbox{ for } 0\leq i\leq m\}$ where $\Sigma^k_m := \sum^m_{i=0} \mathds{1}_{\{{\hat{\tt S}}_i \in [k, k+1)\}}$. By \cref{lem: many-to-one}, we have
\[\mathbb{E}\bigg[\sum_{u\in\mathcal{L}^k_n} \chi(u)^{\gamma}\bigg] = {\tt E}\bigg[\mathrm{e}^{-(\gamma-\omega_-){\hat{\tt S}}_{{\tt T}^k_n}}\cdot \mathds{1}_{\{{\tt T}^k_n < \infty\}}\bigg]\leq \mathrm{e}^{-(\gamma-\omega_-)k}{\tt P}({\tt T}^k_n < \infty). \]

\noindent We then control the probability ${\tt P}({\tt T}^k_n < \infty)$. Set ${\tt T}^-_{b} = \inf\{n: {\hat{\tt S}}_n \leq -b\}$ and ${\tt T}^+_{k} = \inf\{n: {\hat{\tt S}}_n \geq k\}$. By strong Markov property,
\[{\tt P}({\tt T}^k_n < \infty) \leq (1- \sup_{x\in [k, k+1)}{\tt P}_x({\tt T}^+_{k+1} > {\tt T}^-_{b}))^{n-1}. \]
By \cite[Theorem 5.1.7]{LawlerGregoryF.2010RWAM}, there exists a constant $C_2 > 0$ such that for $x\in (-b, a)$,
\[{\tt P}_x({\tt T}_b^- < {\tt T}_a^+) \geq C_2\frac{a-x+1}{b+a+1}.\]

\noindent Therefore, with $C(k) = C_2/(k+b+2)$,
\begin{equation}\label{eq: sum L k n}
\mathbb{E}\bigg[\sum_{u\in\mathcal{L}^k_n} \chi(u)^{\gamma}\bigg] \leq \mathrm{e}^{-(\gamma-\omega_-)k}(1-C(k))^{n-1}.
\end{equation}
We conclude that
\begin{equation}\label{eq: bound sup zu}
\mathbb{E}\bigg[\sup_{u\in\mathcal{L}^k_n} z_u^{\gamma/\alpha}\bigg]\leq C_1 \mathrm{e}^{-(\gamma-\omega_-)k}(1-C(k))^{n-1}.
\end{equation}

In the rest of the proof, we discuss under $\gamma/\alpha\leq 1$ and $\gamma/\alpha > 1$, respectively. We allow the constant $C$ to vary from line to line. In the case when $\gamma/\alpha\leq 1$, we have
\[\mathbb{E}\Big[\mathrm{Height}(T^c)^{\gamma/\alpha}\Big] \leq \mathbb{E}\bigg[\bigg(\sum^{\infty}_{k=-b}\sum^{\infty}_{n=1} \sup_{u\in\mathcal{L}^k_n} z_u\bigg)^{\gamma/\alpha}\bigg]\leq \sum^{\infty}_{k=-b}\sum^{\infty}_{n=1} \mathbb{E}\bigg[\sup_{u\in\mathcal{L}^k_n} z_u^{\gamma/\alpha}\bigg]. \]

\noindent By \eqref{eq: height as sum} and \eqref{eq: bound sup zu}, it ends up with
\[\mathbb{E}\Big[\mathrm{Height}(T^c)^{\gamma/\alpha}\Big] \leq C\sum^{\infty}_{k=-b}\sum^{\infty}_{n=1} \mathrm{e}^{-(\gamma-\omega_-)k}(1-C(k))^{n-1} = C\sum^{\infty}_{k=-b}\frac{1}{C(k)}\mathrm{e}^{-(\gamma-\omega_-)k} \leq C\cdot c^{\gamma-\omega_-}.\]

\noindent In the case when $\gamma/\alpha > 1$, we have
\begin{multline*}
\mathbb{E}\Big[\mathrm{Height}(T^c)^{\gamma/\alpha}\Big]^{\alpha/\gamma} \stackrel{\eqref{eq: height as sum}}{\leq} \mathbb{E}\bigg[\bigg(\sum^{\infty}_{k=-b}\sum^{\infty}_{n=1} \sup_{u\in\mathcal{L}^k_n} z_u\bigg)^{\gamma/\alpha}\bigg]^{\alpha/\gamma}
\stackrel{\mathrm{Minkovski}}{\leq} \sum^{\infty}_{k=-b}\sum^{\infty}_{n=1} \mathbb{E}\Big[\sup_{u\in\mathcal{L}^k_n} z_u^{\gamma/\alpha}\Big]^{\alpha/\gamma}\\
\stackrel{\eqref{eq: bound sup zu}}{\leq} C_1\sum^{\infty}_{k=-b}\sum^{\infty}_{n=1} \mathrm{e}^{-(\gamma-\omega_-)k\frac{\alpha}{\gamma}}(1-C(k))^{\frac{\alpha}{\gamma}(n-1)}
\leq  C\sum^{\infty}_{k=-b} (k+b+2)\mathrm{e}^{-(\gamma-\omega_-)k\frac{\alpha}{\gamma}} \leq C\cdot c^{(\gamma-\omega_-)\frac{\alpha}{\gamma}}.
\end{multline*}

\end{proof}

We can now provide the proof of the upper bound of Hausdorff dimension.

\begin{proof}[Proof of the upper bound in \cref{thm: construct ssMt}. ]
Since $\mathbb{Q}_1({\tt T}^c = T) \to 1$ as $c\to \infty$ in \cref{lem: Truncation}, it suffices to show that the Hausdorff dimension of $\partial T\cap T^c$ is smaller than $\omega_-/\alpha$, a.s. For $u\in \mathbb{U}^c$, define $T_u$ as the subtree of $T$ by gluing the decorated branches above $u$ in $\mathbb{U}$, and $T_u^c$ as the truncation of $T_u$ at $c$. Write $\mathbb{U}^c_n = \mathbb{U}^c\cap \mathbb{N}^n$. Then for each $n\geq 1$, we have $\partial T\cap T^c\subset \cup_{u\in\mathbb{U}^c_n} T^c_u$.

Fix $\gamma \in(\omega_-, \gamma_1)$. For $n\geq 1$ and $|u| = n$, conditioned on $\mathcal{F}_n$, the variable  $\mathrm{Height}(T^c_u)$ has the same law as $(\chi(u))^{\alpha}\mathrm{Height}(T^{c/\chi(u)})$ under $\mathbb{P}$ by self-similarity. By \cref{lem: height truncated tree}, we have
\begin{align*}
\mathbb{E}\bigg[\sum_{u\in\mathbb{U}^c_n} \mathrm{Diam}(T^c_u)^{\gamma/\alpha}\bigg]\leq 2^{\gamma/\alpha}\mathbb{E}\bigg[\sum_{u\in\mathbb{U}^c_n} \mathrm{Height}(T^c_u)^{\gamma/\alpha}\bigg]\leq Cc^{\gamma-\omega_-}2^{\gamma/\alpha}\mathbb{E}\bigg[\sum_{u\in\mathbb{U}^c_n} (\chi(u))^{\omega_-}\bigg].
\end{align*}

\noindent By \cref{lem: many-to-one} and \cite[equation (A.7)]{ShiZhan2016BRWE},
\[\mathbb{E}\bigg[\sum_{u\in\mathbb{U}^c_n} (\chi(u))^{\omega_-}\bigg] = {\tt P}({\hat{\tt S}}_i\geq -b, 1\leq i\leq n) = O\Big(\frac{1}{\sqrt{n}}\Big). \]

\noindent Summing over $n = 2^k$, we have
\[\sum^{\infty}_{k=0} \mathbb{E}\bigg[\sum_{u\in\mathbb{U}^c_{2^k}} (\mathrm{Diam}(T^c_u))^{\gamma/\alpha}\bigg]\leq Cc^{\gamma-\omega_-}\sum^{\infty}_{k=0} 2^{-k/2}<\infty. \]

\noindent Hence $\mathbb{P}$-a.s., as $k\to\infty$, we have $\sum_{u\in\mathbb{U}^c_{2^k}} (\mathrm{Diam}(T^c_u))^{\gamma/\alpha}\to 0$ by the Borel-Cantelli lemma. Since $\gamma\in(\omega_-, \gamma_1)$ is arbitrary, the Hausdorff dimension of $\partial T \cup T^c$ is bounded  above by $\omega_-/\alpha$. \end{proof}




\section{Properties of critical ssMt}\label{sec: Properties ssMt}

In this section we construct the spinal decomposition of critical ssMt and discuss bifurcators by importing results from the subcritical case using the approximating principle Proposition \ref{prop: approximate critical from subcritical}. Then we prove the convergence of $\gamma$-length measures to the harmonic measure on the critical ssMt (\cref{thm: weak convergence length measure}). This convergence is much more delicate to establish compared with the subcritical case and we need to develop delicate estimates on the truncated trees.

\subsection{Spinal Decomposition and bifurcators}\label{sec: spinal decomposition}
The spinal decomposition describes the size-biased law of the ssMt  with a marked point sampled from some measure on the tree (length or harmonic measure). Unlike the subcritical case where the total mass of the harmonic measure has finite expectation, size-biasing by $\mu$ is not well-posed in the critical case. To this end, we need to consider the spinal decomposition with respect to the truncated harmonic measure $\mu^c = c_0\mu|_{\mathtt{T}^c}$.

\subsubsection{The spinal decomposition theorem}\label{sec: statement spinal decomposition}
As expected, the evolution of decoration along the spine of a point sampled according to $\mu^c$ will involve a  L\'evy process conditioned to stay below the barrier $\log c$. We begin by constructing such processes.

\bigskip

\noindent \textbf{L\'evy process conditioned to stay below a barrier. } We introduce the measure $\boldsymbol{\Lambda}_{\omega_-}$ on $\cal{S}$ as in \cite[Section 5.2]{BertoinJean2024SMta}. Fix $i\geq 1$. For any pair $(y, \boldsymbol{y}) = (y, (y_\ell))_{\ell\geq 1}\in\mathbb{R}\times \mathcal{S}_1$, let $(y, \boldsymbol{y})^{\backsim i} \in \mathbb{R}\times \mathcal{S}_1$ be the pair $(y_i, \boldsymbol{y'})$ where $\boldsymbol{y'}$ is the re-ordering of $(y_1, \ldots, y_{i-1}, y, y_{i+1}, \ldots)$ in the non-increasing order. The measure $\boldsymbol{\Lambda}^{\backsim i}$ is then the push-forward of $\boldsymbol{\Lambda}$ by the map $(y, \boldsymbol{y})\mapsto (y, \boldsymbol{y})^{\backsim i}$. Define the measure $\boldsymbol{\Lambda}^{\backsim}_{\omega_-}$ by
\begin{equation}\label{eq: def lambda backsim}
\boldsymbol{\Lambda}^{\backsim}_{\omega_-}(\D y, \D \boldsymbol{y}) : = \mathrm{e}^{\omega_- y}\cdot \bigg(\sum_{i\geq 1}\boldsymbol{\Lambda}^{\backsim i}(\D y, \D \boldsymbol{y})\bigg).
\end{equation}

\noindent By definition of $\kappa(\gamma)$, we have
\begin{equation}\label{eq: finiteness mass}
\boldsymbol{\Lambda}^{\backsim}_{\omega_-}(\mathcal{S}) = \int_{\mathcal{S}}\bigg(\sum_{i\geq 1}\mathrm{e}^{\omega_- y_i}\bigg)\boldsymbol{\Lambda}(\D y, \D \boldsymbol{y}) = \kappa(\omega_-)-\psi(\omega_-) = -\psi(\omega_-)<\infty.
\end{equation}

\noindent Define
\begin{equation}\label{eq: def Lambda_gamma}
\boldsymbol{\Lambda}_{\omega_-}(\D y, \D \boldsymbol{y}) := \mathrm{e}^{\omega_- y}\cdot \boldsymbol{\Lambda}(\D y,\D \boldsymbol{y}) + \boldsymbol{\Lambda}^{\backsim}_{\omega_-}(\D y,\D \boldsymbol{y}).
\end{equation}

\noindent One could check that \eqref{eq: generalised Levy measure} still holds. Thus $\boldsymbol{\Lambda}_{\omega_-}$ is a generalised L\'evy measure\footnote{We remark that in the critical case, the definition \cite[Equation (5.12)]{BertoinJean2024SMta} does not give a generalised L\'evy measure when $\gamma>\omega_-$ (then $\kappa(\gamma)>0$) since the killing term $-\kappa(\gamma)\cdot\delta_{(-\infty,(0,-\infty,\ldots))}(\D y, \D \boldsymbol{y})$ is not well defined. This is why we focus on the harmonic measure only.} and the projection of $\boldsymbol{\Lambda}_{\omega_-}$ onto the first coordinate, $(\Bd\Lambda_{\omega_-})_0(\D x)$, is the same as the L\'evy measure $\Pi(\D x)$ in (\ref{eq: brl spine}). More precisely, the  function $\psi_{\omega_-}(q) := \kappa(\omega_-+q)$ can be expressed using \cite[Lemma 5.4]{BertoinJean2024SMta} as
\begin{equation}\label{def: psi}
\psi_{\omega_-}(q) = \frac{1}{2}\sigma^2q^2 + \mathrm{a}_{\omega_-}q + \int_{\mathcal{S}} (\mathrm{e}^{qy}-1-qy\mathds{1}_{\{|y|\leq 1\}})(\Bd{\Lambda}_{\omega_-})_0 (\mathrm{d}y, \mathrm{d}\boldsymbol{y}),
\end{equation}
\noindent where drift coefficient $\mathrm{a}_{\omega_-}$ is given by
\begin{equation}\label{eq: a gamma}
\mathrm{a}_{\omega_-} = \mathrm{a} + \frac{1}{2}\sigma^2\omega_-^2 + \int_{\mathcal{S}} \Big(y(\mathrm{e}^{\omega_- y}-1)\mathds{1}_{\{|y|\leq 1\}} + \sum^{\infty}_{i=1} y_i\mathrm{e}^{\omega_- y_i} \mathds{1}_{\{|y_i|\leq 1\}}\Big)\Bd\Lambda(\D y, \D\Bd y).
\end{equation}

\noindent As in \eqref{eq: brl spine} one sees that $\psi_{\omega_-}(q)$ is the Laplace exponent of the  L\'evy process $\hat{\xi}$. By an abuse of notation, we also denote by $\hat{P}_x$ and $\hat{E}_x $ the law and expectation of a decoration-reproduction process $(X,\eta)$ starting from $x$ with characteristic quadruplet $(\sigma^2, \mathrm{a}_{\omega_-}, \Bd\Lambda_{\omega_-};\alpha)$. We also simplify $\hat{P}_1$ and $\hat{E}_1$ as $\hat{P}$ and $\hat{E}$.

We proceed as in \cite[Section 3.2]{MalleinBastien2023Anas} to condition $(X,\eta)$ to stay below a barrier $ \mathrm{e}^b$: Recall from \eqref{eq: renewal martingale} that the renewal function $R$ is non-negative harmonic for $\hat{\xi}$ killed when going above $b$. One can then use this function to perform the $h$-transform, namely bias the law of $(\hat{\xi}, \hat{N})$ by $\frac{R(b-\hat{\xi}_t)}{R(b)}\mathds{1}_{\{\hat{\tau}_{b}>t\}}$ and then perform the Lamperti transform. The obtained law of decoration-reproduction process started from $x$, with characteristics $(\sigma^2, \mathrm{a}_{\omega_-}, \Bd\Lambda_{\omega_-};\alpha)$ and conditioned to stay below the barrier $ c=\mathrm{e}^b$ is denoted by $\hat{P}^{c}_x$.

\bigskip

\noindent \textbf{Statement of the spinal decomposition theorem.}
The spinal decomposition theorem provides two ways of description of the same law of decorated tree $\mathtt{T}^\bullet \in \mathbb{T}^\bullet$ with a distinguished point. The first law $\hat{\mathbb{Q}}^c_x$ is obtained by gluing dangling trees to the above spine: Sample the spine $([0, z], d, 0, \hat{X}^c)$ from $\hat{P}^c_x$ with the reproduction process $\hat{\eta}^c$ whose atoms are denoted  by $(t_i, y_i)_i$. Sample then a sequence of independent ssMt ${\tt T}_i = (T_i, d_{T_i}, \rho_i, g_i)$ (also independent of the spine) with characteristic exponent $(\sigma^2, \mathrm{a}, \boldsymbol{\Lambda}; \alpha)$ under the law $\mathbb{Q}_{y_i}$,  respectively. We then get a pointed decorated tree $\hat{\mathtt{T}}$ by gluing those trees on the points $(t_i)_i$ of the spine (provided that the resulting gluing is compact), see \cite{BertoinJean2024SMta} for details. The distinguished point of this tree lies at the extremity of the spine. The second law is obtained by size-biasing $\mathbb{Q}_x$ and sampling a point according to $\mu^c$. More precisely, for a generic function $F$ depending on a pointed decorated tree $\mathtt{T}^\bullet$, recalling Lemma \ref{prop: derivative martingale} we set
\[\tilde{\mathbb{Q}}^{c}_{x}[F({\tt T}^{\bullet})] = x^{-\omega_-}R(\log c-\log x)^{-1}\mathbb{Q}_{x}\bigg[\int_{T}F({\tt T}, r)\mu^c(\mathrm{d}r)\bigg].\]

\begin{thm}[Spinal decomposition]\label{thm: spinal decomposition truncated}
For $c>x>0$, the laws $\tilde{\mathbb{Q}}^c_x$ and $\hat{\mathbb{Q}}^c_x$ on $\mathbb{T}^\bullet$ are identical.
\end{thm}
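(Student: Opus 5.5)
We prove the identity by first establishing a discrete (generation-by-generation) spinal decomposition for the truncated derivative martingale $(D^c_n)$ on the decoration-reproduction family, and then passing to the limit $n\to\infty$. The paper's announced strategy of importing subcritical results is not convenient here, since the $h$-transform involves $R$ rather than a power function. We therefore follow the classical Lyons--Pemantle--Peres / Biggins--Kyprianou route, in the form used by Mallein for branching L\'evy processes conditioned below a barrier \cite[Section 3]{MalleinBastien2023Anas}.

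\emph{Step 1: the change of measure.} By \cref{prop: derivative martingale}, $D^c_n/(x^{\omega_-}R(\log c-\log x))$ is a mean-one martingale under $\mathbb{P}_x$, and it converges in $L^1$. We define $\hat{\mathbb{P}}^c_x$ on $\mathcal{F}_n$ with density $D^c_n/(x^{\omega_-}R(\log c-\log x))$. Enriching this measure with a spine $w_n$, chosen among $|u|=n$ with probability proportional to $\chi(u)^{\omega_-}R(\log c-\log\chi(u))\mathds{1}_{\{g(\llbracket\rho,\rho_u\rrbracket)\le c\}}$, gives a consistent family of measures, by the martingale identity of \cref{prop: derivative martingale} and the branching property.

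\emph{Step 2: the law of the spine.} We identify the law under this measure of the branch carrying the spine. Along the branch of $w_{n}$, the underlying L\'evy process together with its reproduction point process, biased by $\mathrm{e}^{\omega_-\xi}$ and by the choice of which child continues the spine (this is the map $(y,\boldsymbol y)\mapsto(y,\boldsymbol y)^{\backsim i}$ weighted by $\mathrm{e}^{\omega_- y_i}$), has characteristics $(\sigma^2,\mathrm{a}_{\omega_-},\boldsymbol\Lambda_{\omega_-})$. This is exactly the computation of \cite[Lemma 5.4--5.5]{BertoinJean2024SMta}, which only uses $\kappa(\omega_-)=0$. The additional factor $R(b-\cdot)\mathds{1}_{\{\hat\tau_b>\cdot\}}/R(b)$ is the $h$-transform built from \eqref{eq: renewal martingale}. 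Concatenating the generations along the spine with the Lamperti transform therefore yields precisely $\hat{P}^c_x$. I would carry out this step in continuous time on the branching L\'evy process, using the stopping-line many-to-one formula (\cref{lem: brl many-to-one}) exactly as in the proof of \cref{prop: derivative martingale}. Conditionally on the spine, the children born off it evolve independently under the original laws $\mathbb{P}_{y_i}$, by the branching property.

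\emph{Step 3: the limit $n\to\infty$.} This step is the main obstacle. We must show that the spine $(w_n)$ determines a point $r\in\partial T^c$, that the glued tree $\hat{\mathtt T}$ is compact, and that the conditional law of $r$ given the tree under the size-biased measure is $\mu^c/\mu^c(T)$. For the law of $r$, the $L^1$ convergence of $D^c_n$ identifies the size-biased law of $\mathtt T$ with that under the limiting measure, and the spreading masses $m^{(c)}_u$ give $\hat{\mathbb P}(w_n=u\mid \mathtt T)=m^{(c)}_u/m^{(c)}_\varnothing$ in the limit (martingale convergence on the Ulam tree, as in \cite[Section 4]{BertoinJean2024SMta}). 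For compactness and the convergence of $w_n$ to a leaf, we use that under $\hat{P}^c$ the decoration is an $h$-transformed process that drifts to $-\infty$ in logarithmic scale, since the process conditioned to stay below the barrier is transient downward. Its Lamperti lifetime is therefore finite and the spine has finite length. The dangling trees $\mathtt T_i$ are independent ssMt with roots $y_i$, and their heights are controlled by \cref{lem: height truncated tree} together with \cref{lem: Truncation}: the number of atoms with $y_i>\varepsilon$ is finite, so the gluing criterion \cref{thm: gluing building blocks} applies. Finally, both sides are laws on $\mathbb{T}^\bullet$ that agree on the $\sigma$-fields generated by finite generations together with the first $n$ spine ancestors. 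A monotone class argument, with $F$ continuous and depending only on the first $n$ generations, then closes the argument.
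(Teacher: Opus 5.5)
Your argument is sound in outline but takes a different route from the paper. Your remark that importing subcritical results is inconvenient is contradicted by what the paper actually does.

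\textbf{The paper's route.} The paper couples $\mathtt T$ with the killed subcritical tree $\mathtt T^{2,\varepsilon}$. It applies the subcritical spinal decomposition of \cite{BertoinJean2024SMta} to size-biasing by $\nu_\varepsilon=\varepsilon\lambda^{\omega_-}$, which has mean one since $\kappa^\varepsilon(\omega_-)=-\varepsilon$. It then conditions the spine to stay below $c$, with normalising constant $Z_\varepsilon(\log c)=\kappa^+(\varepsilon,0)\mathscr V^{\varepsilon}(\log c)$. The renewal function $R$ only appears in the limit, through $Z_\varepsilon(\log c-y)/Z_\varepsilon(\log c)\to R(\log c-y)/R(\log c)$. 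Marked points in the first $n$ generations are negligible because $\varepsilon/\kappa^+(\varepsilon,0)\to 0$. Both sides are shown to converge as $\varepsilon\to 0$: generation by generation on the size-biased side, and via the Markov property at fixed times on the spine side. Compactness is inherited from the coupling. This takes the within-branch change of measure (the $\backsim i$ tilting) and the validity of the gluing off the shelf, at the cost of Wiener--Hopf normalisations and an exchange of limits.

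\textbf{Your route.} You size-bias directly by the uniformly integrable truncated derivative martingale $D^c_n$, in the style of Lyons--Pemantle--Peres, Biggins--Kyprianou and Mallein. This avoids the $\varepsilon$-limit entirely and identifies the marked point with $\mu^c/\mu^c(T)$ immediately from $L^1$ convergence. In exchange, you must genuinely redo the Poissonian computation behind \cite[Lemmas 5.4--5.5]{BertoinJean2024SMta} along each branch. You must also combine it with optional stopping of $R(b-\hat\xi_t)\mathds{1}_{\{\hat\tau_b>t\}}$ at the spine's generation-change times to pass from generation time to L\'evy time.

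\textbf{One sub-step should be replaced.} Your compactness argument does not verify \eqref{eq: condition 1}. It relies on the spine having finite length because the conditioned process drifts to $-\infty$, together with finitely many atoms satisfying $y_i>\varepsilon$. This has three problems:
\begin{itemize}
\item Finitely many large root types do not prevent infinitely many small-rooted dangling trees from being tall. You would need a Borel--Cantelli input, such as $\sum_i y_i^{\omega_-}<\infty$ along the spine combined with a tail bound on $\mathrm{Height}(T)$ under $\mathbb Q_1$.
\item It does not address the uniform condition over rays.
\item Drifting to $-\infty$ alone does not make $\int_0^\infty \mathrm{e}^{\alpha\hat\xi_s}\,\D s$ finite without a rate.
\end{itemize}
None of this is needed. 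Since $D^c_n\to D^c_\infty$ in $L^1(\mathbb P_x)$, your size-biased measure restricted to $\mathcal F_\infty$ is absolutely continuous with respect to $\mathbb P_x$. Hence the family is a.s.\ compactly glueable under it, and the spine ray converges to a leaf. The spine-plus-dangling-trees gluing is literally the same glued tree, so compactness follows.
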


\subsubsection{Proof of the spinal decomposition theorem.} \label{sec: proof spinal decomposition}
We apply the approximation arguments in \cref{sec: approximation} to prove the spinal decomposition theorem (\cref{thm: spinal decomposition truncated}) in this section. We always set $b=\log c$.

\begin{proof}[\textbf{Proof of \cref{thm: spinal decomposition truncated}}.]
We introduce the subcritical ssMt ${\tt T}^{\varepsilon}:= {\tt T}^{2, \varepsilon}$ coupled with ${\tt T} = (T, \rho, d, g)$ from \cref{prop: approximate critical from subcritical} by adding killing. The characteristic quadruplet of ${\tt T}^{\varepsilon}$ is  $(\sigma^2,\mathrm{a},\Bd{\Lambda}+\varepsilon\delta_{(-\infty;-\infty,\dots)};\alpha)$ and the cumulant function $\kappa^{\varepsilon}(\gamma)$ is $\kappa(\gamma)-\varepsilon$. Let $\omega_-^{\varepsilon}$ be the smallest zero of $\kappa^{\varepsilon}(\gamma) = 0$. By the construction of the coupling in \cref{sec: approximation}, we write ${\tt T}^{\varepsilon} = (T^{\varepsilon}, \rho, d^{\varepsilon}, g^\varepsilon)$ where $T^{\varepsilon}$ is a subtree of $T$, and $d^{\varepsilon}$, $g^{\varepsilon}$ denote the restriction of $d$ and $g$ on $T^{\varepsilon}$, respectively.

By self-similarity, we may assume $x=1$ without loss of generality. We apply the spinal decomposition for subcritical ssMt in \cite[Section 5]{BertoinJean2024SMta}. Let $\nu_{\varepsilon}$ be the length measure $\varepsilon\lambda^{\omega_-}$ restricted to $T^{\varepsilon}$. The total mass of $\nu_{\varepsilon}$ has expectation $\mathbb E[\nu_{\varepsilon}(T^{\varepsilon})]=1$ by \cite[(2.22)]{BertoinJean2024SMta}. Consider the probability measure $\tilde{\mathbb{Q}}_{(\varepsilon)}$\footnote{We add brackets to distinguish it from $\tilde{\mathbb{Q}}_{\varepsilon}$ which conventionally means the probability measure of a ssMt whose root has type $\varepsilon$. We also use notation $\tilde{\mathbb{Q}}^c_{(\varepsilon)}$ and $\hat{\mathbb{Q}}^c_{(\varepsilon)}$ below to distinguish from $\tilde{\mathbb{Q}}^c_{\varepsilon}$ and $\hat{\mathbb{Q}}^c_{\varepsilon}$.} on the space $\mathbb{T}^{\bullet}$ of pointed tree such that for any bounded measurable function $F:\mathbb{T}^{\bullet} \to \mathbb{R}$:
\begin{equation}\label{eq: size biase sub}
    \tilde{\mathbb{Q}}_{(\varepsilon)}(F({\tt T}^{\bullet}))= \mathbb Q\left(\int_{T^{\varepsilon}} F({\tt T}^{\varepsilon}, r)\nu_{\varepsilon}(\D r) \right).
\end{equation}
By the spinal decomposition for subcritical ssMt in \cite[Section 5]{BertoinJean2024SMta} this law is the same as the following one : let  $\hat{\mathbb{Q}}_{(\varepsilon)}$ be the law of a decorated tree obtained by first sampling a spine with law $\hat{P}^{\varepsilon}$ which is identical to $\hat{P}$ except that the underlying L\'evy process has an additional killing rate of $\varepsilon$. Given the atoms $(t_i, y_i)_i$ of the reproduction process, glue independent (also independent of the spine) ssMt ${\tt T}_i$ with the same law as ${\tt T}^{\varepsilon}$ under $\mathbb{Q}_{y_i}$.

We now want to condition on the spine staying below $c$. Let $\hat{\tau}_{b}=\inf\{t:\hat{\xi}(t)>b\}$. The probability for the spine to stay below $c$ is
\begin{align*}
\tilde{\mathbb{Q}}_{(\varepsilon)}\bigg(\mathds{1}_{\{g(\llbracket \rho, r \rrbracket)\leq c\}}\bigg) &= \hat{P}(\xi_t\leq \log c, 0\leq t\leq \zeta \wedge \mathrm{e}(\varepsilon))  \\
&= \varepsilon\int^{\infty}_0 \mathrm{e}^{-\varepsilon t}\hat{P}(\hat{\tau}_{\log c} > t) \D t.
\end{align*}
By \cite[Section 6, Equation (8)]{bertoin1996levy}, we have
\begin{equation*}
    \varepsilon\int_0^{\infty} \mathrm{e}^{-\varepsilon t}\hat P(\hat{\tau}_{\log c}>t)\D t=\kappa^+(\varepsilon,0)\mathscr{V}^{\varepsilon}(\log c)=:Z_{\varepsilon}(\log c).
\end{equation*}

\noindent Here $\kappa^+(a,b)$ and $ \mathscr{V}^{\varepsilon}$ are defined in \eqref{eq: def kappa+} and in \eqref{eq: def V}, respectively. As $\varepsilon$ goes to $0$, $\kappa^+(\varepsilon,0)$ decreases to $0$ and $\mathscr{V}^{\varepsilon}(\log c-\log x)$ increases to $R(\log c-\log x)$. Consider the conditional law $\tilde{\mathbb{Q}}^c_{(\varepsilon)}$ defined by
\begin{equation}
\tilde{\mathbb{Q}}^c_{(\varepsilon)}(F({\tt T}^{\bullet}))
 =  \frac{1}{Z_{\varepsilon}(\log c)}\mathbb Q\left(\int_{T^{\varepsilon}} F({\tt T}^{\varepsilon}, r)\cdot \mathds{1}_{\{g\llbracket \rho, r \rrbracket\leq c\}}\nu_{\varepsilon}(\D r) \right).
\end{equation}

Similarly, sample $(X,\eta)$ under $\hat{P}^{\varepsilon}$ conditioned on $X_t\leq c$ for $t\leq z^{\varepsilon}$ whose law is denoted by $\hat{P}^{c}_{1, \varepsilon}$ ($\hat{P}^{c}_{x, \varepsilon}$ for the process starting from $x$). Writing atoms of $\eta$ by $(t_i,y_i)_i$, we glue independent ssMt ${\tt T}_i$ with the same law as ${\tt T}^{\varepsilon}$ under $\mathbb{Q}_{y_i}$ at those points $(t_i)_i$. From \cite[Proposition 5.6]{BertoinJean2024SMta}, the gluing procedure is valid and we denote by $\hat{\mathbb{Q}}^c_{(\varepsilon)}$ be the law of the decorated tree obtained when $x=1$. Then we have $\hat{\mathbb{Q}}^c_{(\varepsilon)} = \tilde{\mathbb{Q}}^c_{(\varepsilon)}$ by  \cite[Proposition 5.7]{BertoinJean2024SMta}. It remains to show the following: 1) as $\varepsilon\to 0$, $\tilde{\mathbb{Q}}^c_{(\varepsilon)} \to \tilde{\mathbb{Q}}^c$; 2) as $\varepsilon\to 0$, $\hat{\mathbb{Q}}^c_{(\varepsilon)} \to \hat{\mathbb{Q}}^c$ and 3) the gluing under $\hat{\mathbb{Q}}$ produces compact decorated tree.

We first show that $\tilde{\mathbb{Q}}^c_{(\varepsilon)} \to \tilde{\mathbb{Q}}^c$. Take $n\in \mathbb N$ in the construction from the family of decoration-reproduction. We define ${\tt T}_n$ and ${\tt T}_n^{\varepsilon}$ be the subtree of $\tt T$ and $\tt T^{\varepsilon}$ truncated at generation $n$. We write $r$ as $(u^\bullet,t^\bullet)$, where $u^\bullet\in \mathbb U$ and $t^\bullet\in [0,z_{u^\bullet}]$. Let $\mathcal F_n$ be the $\sigma$-field generated by the pointed decorated tree up to generation $n$. Take a bounded functional $F_n$ measurable to $\mathcal{F}_n$. Since $\mathbb Q(\lambda^{\omega_-}({\tt T}_n))$ is finite and $F_n$ is bounded, we see by \eqref{eq: relation between K and c}
\begin{align*}
     \frac{ \mathbb Q\left(\int_T F_n({\tt T}^\varepsilon_n,u^{\bullet}_n)  \mathds{1}_{\{g(\llbracket \rho, r \rrbracket)\le c \}}\mathds{1}_{\{|u^\bullet|< n\}}\nu_{\varepsilon}(\D r) \right)}{Z_{\varepsilon}(\log c)}\le C \frac{\varepsilon}{\kappa^+(\varepsilon,0)} = KC\kappa^-(\varepsilon,0).
\end{align*}
where $C$ is a positive constant which does not depend on $\varepsilon$ and $\kappa^-(\varepsilon,0)$ goes to $0$ as $\varepsilon$ decreases to $0$. For each $|u|=n$, we have by taking conditional expectation with respect to $\mathcal{F}_n$ and applying Markov property
\begin{equation}\label{eq: prob to choose u}
\begin{split}
&\frac{1}{Z_{\varepsilon}(\log c)}\mathbb Q\left(\int_T F_n({\tt T}_n^\varepsilon,u^{\bullet}_n) \mathds{1}_{\{g(\llbracket \rho, r \rrbracket)\le c \}}\mathds{1}_{\{|u^\bullet|\ge n,u^\bullet_n=u\}}\nu_{\varepsilon}(\D r) \right)\\
&\,= \mathbb Q\left(F_n({\tt T}_n^\varepsilon,u)\mathds{1}_{\{g(\llbracket \rho,\rho_u \rrbracket)\le c\}}\frac{\mathbb Q_{\chi(u)}\left(\int_T \mathds{1}_{\{g(\llbracket \rho, r \rrbracket)\le c \}}\nu_{\varepsilon}(\D r) \right)}{Z_{\varepsilon}(\log c)}\right)\\
&\,= \mathbb Q\left(F_n({\tt T}_n^\varepsilon,u)\mathds{1}_{\{g(\llbracket \rho,\rho_u \rrbracket)\le c\}}(\chi(u))^{\omega_-}\frac{ Z_{\varepsilon}(\log c-\log \chi(u))}{Z_{\varepsilon}(\log c)}\right).
\end{split}
\end{equation}
As $\varepsilon$ goes to $0$, the ssMt ${\tt T}^{\varepsilon}_n$ converges to ${\tt T}_n$. Recall that $\mathscr{V}^{\varepsilon}(\log c)$ increases to $R(\log c)$. We see that, for each $y<\log c$
\begin{align}\label{eq: fractral V to R}
    \frac{Z_{\varepsilon}(\log c - y)}{Z_{\varepsilon}(\log c )} = \frac{\mathscr{V}^{\varepsilon}(\log c -  y)}{\mathscr{V}^{\varepsilon}(\log c )}\longrightarrow  \frac{R(\log c - y)}{R(\log c )}.
\end{align}
We see that $\mathscr{V}^{\varepsilon}(\log c-\log x) $ increases to $R(\log c-\log x)$ as $\varepsilon\to 0$. By \eqref{eq: fractral V to R} and the dominated convergence theorem, the last term in \eqref{eq: prob to choose u} converges to
\begin{multline*}
\mathbb Q\left( F_n({\tt T}_n,u)\mathds{1}_{\{g(\llbracket \rho,\rho_u \rrbracket)\le c\}}(\chi(u))^{\omega_-}\frac{R(\log c-\log \chi(u))}{R(\log c)}\right)\,=\\
R(\log c)^{-1}\mathbb Q\left(\int_T F_n({\tt T}_n,u^{\bullet}_n)\mathds{1}_{\{g(\llbracket \rho, r \rrbracket)\le c \}}\mathds{1}_{\{|u^\bullet|\ge n,u^\bullet_n=u\}}\mu^c{}(\D r) \right).
\end{multline*}
Then we conclude that $\tilde{\mathbb{Q}}^{c}_{(\varepsilon)}(A_n) \to \tilde{\mathbb{Q}}^c(A_n)$ for each $n\in \mathbb N$ and $A_n\in \mathcal{F}_n$. It implies $\tilde{\mathbb{Q}}^{c}_{(\varepsilon)} \to \tilde{\mathbb{Q}}^c$ by the monotone class theorem.

We then prove that $\hat{\mathbb{Q}}^{c}_{(\varepsilon)} \to \hat{\mathbb{Q}}^c$. To begin with, we discuss the law of the spine. For fixed $t>0$, and any bounded measurable functional on the space of processes on $[0, t]$,
\begin{align*}
\hat{E}^c_{1, \varepsilon}[F(\hat{\xi}|_{[0, t]})] &= (Z_{\varepsilon}(\log c ))^{-1}\hat{E}[F(\hat{\xi}|_{[0, t]})\cdot \mathds{1}_{\{\hat\xi_s \leq \log c, 0\leq s\leq \zeta\wedge \mathrm{e}(\varepsilon)\}}]\\
&= (Z_{\varepsilon}(\log c))^{-1}\hat{E}[F(\hat{\xi}|_{[0, t]})\cdot \mathds{1}_{\{\hat\xi_s \leq \log c, 0\leq s\leq \zeta\wedge \mathrm{e}(\varepsilon)\}}\mathds{1}_{\{\mathrm{e}(\varepsilon)>t \}}]\\
&\quad+ (Z_{\varepsilon}(\log c))^{-1}\hat{E}[F(\hat{\xi}|_{[0, t]})\cdot \mathds{1}_{\{\hat\xi_s \leq \log c, 0\leq s\leq \zeta\wedge \mathrm{e}(\varepsilon)\}}\mathds{1}_{\{\mathrm{e}(\varepsilon)\leq t \}}].
\end{align*}
The second term on the right hand side converges to $0$ as $\varepsilon\to 0$ since $F$ is bounded, $P(\mathrm{e}(\varepsilon)\leq t) = 1-\exp(-\varepsilon t) \sim t \varepsilon$ and $\varepsilon/\kappa^+(\varepsilon, 0) = K^{-1}\kappa^-(\varepsilon, 0)\to 0$. Conditioned on $\mathrm{e}(\varepsilon)>t$, $\mathrm{e}'(\varepsilon) := \mathrm{e}(\varepsilon)-t$ is exponentially distributed with parameter $1/\varepsilon$. By Markov property at time $t$, the first term equals
\begin{equation}\label{eq: convergence from Z to R}
\begin{split}
    &\hat{E}\bigg[F(\hat{\xi}|_{[0, t]})\cdot \mathds{1}_{\{\hat\xi_s \leq \log c, 0\leq s\leq t\wedge \zeta\}}\mathds{1}_{\{\mathrm{e}(\varepsilon) > t\}}\frac{\hat{P}^c_{y, \varepsilon}(\hat{\xi}_s\leq \log c, 0\leq s \leq \zeta\wedge\mathrm{e}'(\varepsilon))|_{y = \exp(\hat{\xi}_t)}}{Z_{\varepsilon}(\log c )}\bigg]\\
&\,= \hat{E}\bigg[F(\hat{\xi}|_{[0, t]})\cdot \mathds{1}_{\{\hat\xi_s \leq \log c, 0\leq s\leq t\wedge \zeta\}}\mathds{1}_{\{\mathrm{e}(\varepsilon) > t\}}\frac{Z_{\varepsilon}(\log c - \hat{\xi}_t)}{Z_{\varepsilon}(\log c )}\bigg].
\end{split}
\end{equation}
Then \eqref{eq: fractral V to R} implies that the expectation on the right hand side of \eqref{eq: convergence from Z to R} converges to
\[\hat{E}\bigg[F(\hat{\xi}|_{[0, t]})\cdot \mathds{1}_{\{\hat\xi_s \leq \log c, 0\leq s\leq t\wedge \zeta\}}\frac{R(\log c - \hat{\xi}_t)}{R(\log c )}\bigg]=\hat{E}^{c}[F(\hat{\xi}|_{[0, t]}) ]\]
as $\varepsilon\to 0$ by dominated convergence. Since $t$ is arbitrary, we conclude that the law of the spine satisfies $\hat{P}^c_{1,\varepsilon} \to \hat{P}^c$.

The law for each dangling tree ${\tt T}_i$ converges a.s. to a ssMt with law $\mathbb{P}_{y_i}$ by \cref{prop: approximate critical from subcritical}. To see the independence of the dangling trees, we remark that the type of the root $y_i$ does not depends on $\varepsilon$ once its birth time $t_i$ is smaller than $\mathrm{e}(\varepsilon)$. Thus the independence still holds as $\varepsilon\to 0$. This concludes that $\hat{\mathbb{Q}}^{c}_{(\varepsilon)} \to \hat{\mathbb{Q}}^c$.

Finally, to see that the gluing operation produces compact decorated trees, notice that the decorated trees under $\hat{\mathbb{Q}}^c_{(\varepsilon)}$ are subtrees of ${\tt T}^{\bullet}$ under $\tilde{\mathbb{Q}}^c$ by the coupling. Therefore, $\hat{\mathbb{Q}}^c_{(\varepsilon)} \to \hat{\mathbb{Q}}^c$ could be viewed as convergence of subtrees inside ${\tt T}^{\bullet}$ under $\tilde{\mathbb{Q}}^c$. Since ${\tt T}^{\bullet}$ is compact under $\tilde{\mathbb{Q}}^c$, the same holds for $\hat{\mathbb{Q}}^c$.
\end{proof}

\subsubsection{Bifurcators}\label{sec: bifurcators}

We next discuss the bifurcators in the critical case. Bifurcation is the phenomenon that two different characteristic quadruplets may determine the same law of a ssMt: two such quadruplets are called bifurcators of each other. We recall some background from \cite[Chapter 5]{BertoinJean2024SMta}: Let $\ord: \mathcal{S} \to \mathcal{S}_1$ be the map sending $(y_0, (y_i)_{i\geq 1})$  to the sequence $(y_i)_{i\geq 0}$ arranged in decreasing order.

\begin{defn}
We say that the two quadruplets $(\sigma^2, \mathrm{a}, \Bd\Lambda; \alpha)$ and $(\sigma^2_{\Yleft}, \mathrm{a}_{\Yleft}, \Bd\Lambda_{\Yleft}; \alpha_{\Yleft})$ are bifurcators of each other, denoted by $(\sigma^2, \mathrm{a}, \Bd\Lambda; \alpha)\approx(\sigma^2_{\Yleft}, \mathrm{a}_{\Yleft}, \Bd\Lambda_{\Yleft}; \alpha_{\Yleft})$, if and only if
\begin{equation}\label{eq: bifurcator condition 1}
\sigma^2 = \sigma^2_{\Yleft}, \quad \alpha = \alpha_{\Yleft},\quad \Bd\Lambda \circ \ord^{-1} = \Bd\Lambda_{\Yleft}\circ \ord^{-1},
\end{equation}
\noindent and
\begin{equation}\label{eq: bifurcator condition 2}
\mathrm{a} - \mathrm{a}_{\Yleft} = \lim_{\varepsilon\to 0+}\bigg(\int_{\varepsilon < |y|\leq 1} y\cdot\Bd\Lambda(\D y,\D \Bd y) - \int_{\varepsilon < |y|\leq 1}y\cdot \Bd\Lambda_{\Yleft}(\D y, \D\Bd y)\bigg).
\end{equation}

\end{defn}

Now we fix two quadruplets $(\sigma^2, \mathrm{a}, \Bd\Lambda; \alpha)$ and $(\sigma^2_{\Yleft}, \mathrm{a}_{\Yleft}, \Bd\Lambda_{\Yleft}; \alpha_{\Yleft})$. With these two quadruplets we define, respectively, $P_x$ and $P^{\Yleft}_x$ as the laws of decoration-reproduction processes for an individual with type $x>0$, $\mathbb{P}_x$ and $\mathbb{P}^{\Yleft}_x$ as the laws of the families of decoration-reproduction processes, and finally $\mathbb{Q}_x$ and $\mathbb{Q}^{\Yleft}_x$ as the laws of self-similar Markov trees with roots of type $x$.

\begin{thm}\label{thm: bifurcators}
Assume that $(\sigma^2, \mathrm{a}, \Bd\Lambda; \alpha)$ and $(\sigma^2_{\Yleft}, \mathrm{a}_{\Yleft}, \Bd\Lambda_{\Yleft}; \alpha_{\Yleft})$ satisfy \cref{Assumption A}. Then $\mathbb{Q}_x = \mathbb{Q}^{\Yleft}_x$ for all $x>0$, if and only if $(\sigma^2, \mathrm{a}, \Bd\Lambda; \alpha)$ and $(\sigma^2_{\Yleft}, \mathrm{a}_{\Yleft}, \Bd\Lambda_{\Yleft}; \alpha_{\Yleft})$ are bifurcators of each other.
\end{thm}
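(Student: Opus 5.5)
The plan is to reduce both directions to the subcritical statement \cite[Theorem 5.x (bifurcators)]{BertoinJean2024SMta}, using the killing approximation of \cref{prop: approximate critical from subcritical} with $i=2$. Killing is intrinsic (\cref{rem: intrinsic approx}), and for bifurcators this is exactly what we need.

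\emph{``If'' direction.} Assume $(\sigma^2, \mathrm{a}, \Bd\Lambda; \alpha)\approx(\sigma^2_{\Yleft}, \mathrm{a}_{\Yleft}, \Bd\Lambda_{\Yleft}; \alpha_{\Yleft})$. Adding the atom $\varepsilon\delta_{(-\infty,(-\infty,\ldots))}$ to both generalised L\'evy measures leaves \eqref{eq: bifurcator condition 1} unchanged. Indeed $\ord$ sends this atom to the all-$(-\infty)$ sequence in both cases, and the drift condition \eqref{eq: bifurcator condition 2} only involves $|y|\le 1$. Hence the two killed quadruplets $(\sigma^2,\mathrm{a},\Bd\Lambda+\varepsilon\delta;\alpha)$ and $(\sigma^2_{\Yleft},\mathrm{a}_{\Yleft},\Bd\Lambda_{\Yleft}+\varepsilon\delta;\alpha_{\Yleft})$ are still bifurcators. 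Both cumulants equal $\kappa-\varepsilon$ and $\kappa_{\Yleft}-\varepsilon$, which take negative values, so the quadruplets are subcritical. One should also check that the subcritical hypotheses of the cited theorem are met, for instance $\psi(\gamma_1)<0$, which is inherited from \cref{Assumption A}. The subcritical theorem then gives $\mathbb{Q}^{2,\varepsilon}_x=\mathbb{Q}^{\Yleft,2,\varepsilon}_x$. Letting $\varepsilon\downarrow0$, \cref{prop: approximate critical from subcritical} gives $\mathtt{T}^{2,\varepsilon}\to\mathtt{T}$ a.s. in $d_{\mathbb{T}}$, and likewise on the $\Yleft$ side. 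Since equality of laws passes to weak limits, $\mathbb{Q}_x=\mathbb{Q}^{\Yleft}_x$.

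\emph{``Only if'' direction.} Assume $\mathbb{Q}_x=\mathbb{Q}^{\Yleft}_x$. By \cref{rem: intrinsic approx}, $\mathtt{T}^{2,\varepsilon}$ is obtained as $\Phi_\varepsilon(\mathtt{T},U)$, where $\Phi_\varepsilon$ is a measurable map and $U$ is independent randomness (the exponential clocks along geodesics). The map $\Phi_\varepsilon$ depends only on $\alpha$ through the inverse Lamperti transform. So we first identify $\alpha=\alpha_{\Yleft}$ intrinsically, via the self-similarity property: under $\mathbb{Q}_x$ the height scales like $x^{\alpha}$, so $\mathbb{Q}_x$ determines $\alpha$. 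Then $\Phi_\varepsilon(\mathtt{T},U)$ has the same law under both models, that is, $\mathbb{Q}^{2,\varepsilon}_x=\mathbb{Q}^{\Yleft,2,\varepsilon}_x$. These are subcritical ssMt, so the subcritical theorem shows the killed quadruplets are bifurcators. Finally we strip the common atom $\varepsilon\delta_{(-\infty,(-\infty,\ldots))}$ from both sides. This recovers \eqref{eq: bifurcator condition 1} for the original $\Bd\Lambda,\Bd\Lambda_{\Yleft}$, and \eqref{eq: bifurcator condition 2} is unaffected.

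\emph{Main obstacle.} The delicate step is making the intrinsic killing map rigorous and checking that it commutes with the identity of laws. We must show that $\Phi_\varepsilon$ is jointly measurable on $\mathbb{T}$ and genuinely produces the $i=2$ coupling for any quadruplet with the given $\alpha$, regardless of the underlying genealogical representation. The key point is that killing at rate $\varepsilon$ in Lévy time corresponds to killing at rate $\varepsilon g^{-\alpha}\,\D\lambda_T$ along the tree. This is a Poisson pruning with intensity $\varepsilon\lambda^{0}$, which is defined from $(T,g)$ alone and needs only the local finiteness of $\lambda^0$ on the skeleton. I would present $\Phi_\varepsilon$ directly in this way, as cutting $T$ at the atoms of a Poisson process of intensity $\varepsilon\lambda^0$ and keeping the root component. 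This avoids the Lamperti inversion and makes the independence from the representation evident.
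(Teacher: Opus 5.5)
Your proposal is correct and follows essentially the same route as the paper. Both directions reduce to \cite[Theorem 5.13]{BertoinJean2024SMta} via the killing approximation $\mathtt{T}^{2,\varepsilon}\to\mathtt{T}$ of \cref{prop: approximate critical from subcritical}, and the converse uses self-similarity to get $\alpha=\alpha_{\Yleft}$ together with the intrinsic nature of killing from \cref{rem: intrinsic approx}. Your additions fill in details the paper leaves implicit, and are consistent where the paper's own text is not (it writes the drift-shifted quadruplets with $\mathrm{a}-\varepsilon$ while claiming to add killing). These are the check that the atom $\varepsilon\delta_{(-\infty,(-\infty,\ldots))}$ is invisible to both bifurcator conditions, so it can be added and then stripped, and the description of killing as Poisson pruning with intensity $\varepsilon\lambda^0=\varepsilon g^{-\alpha}\,\D\lambda_T$.
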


To prove the theorem, one could use the spinal decomposition theorem \cref{thm: spinal decomposition truncated} following the same arguments as in \cite[Section 5.3]{BertoinJean2024SMta}. We instead apply the approximation arguments in \cref{sec: approximation} to avoid repetition.

\begin{proof}
Let ${\tt T}^{\varepsilon}$ be the subcritical trees coupled with ${\tt T}$ as in \cref{sec: approximation} by adding a killing rate of $ \varepsilon$, and similarly ${\tt T}^{\varepsilon}_{\Yleft}$ for  ${\tt T}_{\Yleft}$. Suppose that $(\sigma^2, \mathrm{a}, \Bd\Lambda; \alpha)$ and $(\sigma^2_{\Yleft}, \mathrm{a}_{\Yleft}, \Bd\Lambda_{\Yleft}; \alpha_{\Yleft})$ are bifurcators of each other, then for each $\varepsilon > 0$, the pair $(\sigma^2, \mathrm{a}-\varepsilon, \Bd\Lambda; \alpha)$ and $(\sigma^2_{\Yleft}, \mathrm{a}_{\Yleft}-\varepsilon, \Bd\Lambda_{\Yleft}; \alpha_{\Yleft})$ are bifurcators of each other. By \cite[Theorem 5.13]{BertoinJean2024SMta}, the trees ${\tt T}^{ \varepsilon}$ and ${\tt T}^{\varepsilon}_{\Yleft}$ have the same law. By \cref{prop: approximate critical from subcritical} we have ${\tt T}^{\varepsilon}\to {\tt T}$ and ${\tt T}^{ \varepsilon}_{\Yleft} \to{\tt T}_{\Yleft}$ almost surely, we conclude that $\mathbb{Q}_x = \mathbb{Q}^{\Yleft}_x$.

Conversely, we assume that $\mathbb{Q}_x = \mathbb{Q}^{\Yleft}_x$. By self-similarity, we have $\alpha = \alpha_{\Yleft}$. From \cref{rem: intrinsic approx}, adding killing to a ssMt is intrinsic hence ${\tt T}^{ \varepsilon}$ and ${\tt T}^{ \varepsilon}_{\Yleft}$ have the same law. By \cite[Theorem 5.13]{BertoinJean2024SMta} again, we see that $(\sigma^2, \mathrm{a}-\varepsilon, \Bd\Lambda; \alpha)$ and $(\sigma^2_{\Yleft}, \mathrm{a}_{\Yleft}-\varepsilon, \Bd\Lambda_{\Yleft}; \alpha_{\Yleft})$ are bifurcators of each other. So are $(\sigma^2, \mathrm{a}, \Bd\Lambda; \alpha)$ and $(\sigma^2_{\Yleft}, \mathrm{a}_{\Yleft}, \Bd\Lambda_{\Yleft}; \alpha_{\Yleft})$. \end{proof}

\subsection{Convergence from length measures to the harmonic measure. }\label{sec: convergence between measures}

Recall from \cref{prop: expected length} that the $\gamma$-length measure $\lambda^\gamma$ is defined for $\gamma>\omega_-$. In the next result we show that the harmonic measure is a limit of the renormalised length measures (in particular, it is intrinsic):\begin{thm}\label{thm: weak convergence length measure} Under \cref{Assumption A} and \cref{Assumption B}, there exists a sequence $(\gamma_n)_n$ with $\gamma_n \downarrow\omega_-$ such that $\mathbb{P}_1$-a.s.,
\[\lim_{\gamma_n \downarrow\omega_-} \frac{\kappa^{\prime\prime}(\omega_-)}{2}(\gamma_n-\omega_-)\lambda^{\gamma_n} = \mu, \]

\noindent in the sense of weak convergence of finite measures on $T^c$.
\end{thm}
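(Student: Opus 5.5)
Since $\mathbb{Q}_1(\mathtt{T}^c=\mathtt{T})\to 1$ as $c\to\infty$ by \cref{lem: Truncation}, it suffices to fix $c=\mathrm{e}^b$ and prove that, along a subsequence $\gamma_n\downarrow\omega_-$, the measures $\frac{\kappa''(\omega_-)}{2}(\gamma_n-\omega_-)\lambda^{\gamma_n}|_{T^c}$ converge weakly to $\mu|_{T^c}=c_0^{-1}\mu^c$ a.s. The measures live on the compact tree $T^c$, and weak convergence is determined by the masses of the subtrees $T^c_u$, $u\in\mathbb{U}$ (the balls generated by the genealogy form a convergence-determining class, as in the subcritical proof of \cite[Proposition 2.15]{BertoinJean2024SMta}). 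Set $\varepsilon=\gamma-\omega_-$ and $\Lambda^{(\varepsilon)}_u:=\frac{\kappa''(\omega_-)}{2}\varepsilon\,\lambda^{\gamma}(T^c_u)$. I would show that for each fixed $u$, $\Lambda^{(\varepsilon)}_u\to c_0^{-1}m^{(c)}_u$ in $L^1(\mathbb{P})$ (or in probability) as $\varepsilon\to 0$. A diagonal extraction over the countable set $\mathbb{U}$ then gives an a.s. subsequence, and hence the subsequential statement.

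\textbf{Step 1 (first moments).} By \cref{prop: expected length} and self-similarity, $\mathbb{E}_y[\lambda^\gamma(T^c)]=K y^{\gamma}\int_{[0,b-\log y]}\mathrm{e}^{\varepsilon x}R(\D x)\int_{[0,\infty)}\mathrm{e}^{-\varepsilon x}R^-(\D x)$. By the renewal theorem \eqref{eq: renewal theorem}, $\int_{[0,\infty)}\mathrm{e}^{-\varepsilon x}R^-(\D x)\sim c_0^-/\varepsilon$, while the first integral tends to $R(b-\log y)$. Using \eqref{eq: relation between K and c}, we get
\[\tfrac{\kappa''(\omega_-)}{2}\,\varepsilon\,\mathbb{E}_y[\lambda^\gamma(T^c)]\longrightarrow \tfrac{1}{c_0}\,y^{\omega_-}R(b-\log y)=c_0^{-1}\mathbb{E}_y[\mu^c(T)],\]
which matches \cref{prop: derivative martingale}. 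This identifies the constant.

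\textbf{Step 2 (reduction to generation $n$).} Decompose $\lambda^\gamma(T^c_u)$ as the length carried by branches of generation $<|u|+n$ plus $\sum_{|v|=n}\lambda^\gamma(T^c_{uv})$. The first part has finite expectation independent of $\varepsilon$, since $\mathcal{M}(\gamma)^k$ stays bounded for $k\le n$, so after multiplication by $\varepsilon$ it vanishes. Conditionally on $\mathcal{F}_{|u|+n}$, the second part has conditional mean $\sum_v \chi(uv)^{\gamma}\Phi_\varepsilon(\chi(uv))\mathds{1}_{\{\cdot\le c\}}$, which by Step 1 tends to $c_0^{-1}D^{c}_{n}$ restricted to $u$. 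As $n\to\infty$, this converges to $c_0^{-1}m^{(c)}_u$ in $L^1$ by \cref{prop: derivative martingale}. It therefore remains to control the conditional fluctuation
\[\varepsilon\Big(\sum_{|v|=n}\lambda^\gamma(T^c_{uv})-\mathbb{E}[\,\cdot\mid\mathcal{F}_{|u|+n}]\Big)\]
uniformly in small $\varepsilon$, and to show it vanishes as $n\to\infty$.

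\textbf{Step 3 (main obstacle: fluctuation bound).} The summands are conditionally independent, so a $p$-th moment inequality (von Bahr--Esseen with $p\in(1,2]$ as in \cref{Assumption B}) bounds the fluctuation by
\[\sum_v \varepsilon^p\,\mathbb{E}_{\chi(uv)}\big[\lambda^\gamma(T^c)^p\big].\]
The key estimate I would aim for is the uniform bound
\[\varepsilon^p\,\mathbb{E}_y\big[\lambda^\gamma(T^c)^p\big]\le C\,y^{p\omega_-}R(b-\log y)^{p}\times(\text{polynomial in }b-\log y).\]
To prove it, I would expand the $p$-th moment through the spine: bias by $\lambda^\gamma$ using the many-to-one formula (\cref{lem: brl many-to-one}) and the barrier, then bound $\lambda^\gamma(T^c)^{p-1}$ along the spine by the conditional first moments of the dangling trees, each computed via \cref{prop: expected length}. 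This requires the fluctuation identities for $\hat\xi$ killed above $b$ together with truncation estimates, and it is the hardest step; the uniformity in $\varepsilon$ is the delicate point. Granting this bound, the fluctuation is dominated by $\sum_{|v|=n}\chi(uv)^{p\omega_-}\mathrm{poly}(b-\log\chi(uv))$. Since $p>1$ and $\max_{|v|=n}\chi(uv)\to0$ (as in the proof of \cref{prop: construct ssMt}), this sum is at most $\max_v\chi(uv)^{(p-1)\omega_-}$ times the truncated derivative martingale, and so tends to $0$ in probability as $n\to\infty$. Combining Steps 1–3 yields the convergence in probability for each $u$, and the diagonal extraction gives the a.s. subsequence.
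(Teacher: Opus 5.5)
Your Steps 1 and 2 follow the paper's architecture: the first-moment asymptotics of \cref{prop: property R gamma c}(1), then the generation-$n$ conditional expectation converging to $D^c_n$, then $D^c_n\to\mu^c(T^c)$ in $L^1$. Step 3, however, has a genuine gap. It carries all the difficulty and you grant it, but the moment bound you aim for is false, and your concluding argument does not survive once the bound is corrected.

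\textbf{Why the bound fails.} The $p$-th moment of $\lambda^\gamma(T^c)$ is driven by rare lineages that come within $O(1)$ of the barrier. Under $\mathbb{Q}_1$, let $\mathcal{L}$ be the stopping line of points where a lineage first exceeds $c\mathrm{e}^{-1}$, keeping only points where the decoration is at most $c\mathrm{e}^{-1/2}$. The truncated subtrees above $\mathcal{L}$ are disjoint. Combining superadditivity of $x\mapsto x^p$, the Markov property with self-similarity, Jensen, and the many-to-one formula along stopping lines gives
\[
\varepsilon^p\,\mathbb{E}_1\big[\lambda^\gamma(T^c)^p\big]\;\ge\;(c\mathrm{e}^{-1})^{p\gamma}\big(\varepsilon R_\gamma(\mathrm{e}^{1/2})\big)^p\,(c\mathrm{e}^{-1/2})^{-\omega_-}\,\hat{P}\big(\hat{\xi}_{\hat{\tau}_{\log c-1}}\le \log c-\tfrac12\big).
\]
Here $\varepsilon R_\gamma(\mathrm{e}^{1/2})\to Kc_0^-R(1/2)>0$, and the overshoot probability stays bounded below by the renewal theorem. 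So as $\varepsilon\to0$ the left side is at least a constant times $c^{(p-1)\omega_-}$, which is polynomial in $c$, not $R(\log c)^p\,\mathrm{poly}(\log c)$. By scaling, the natural order of $\varepsilon^p\mathbb{E}_y[\lambda^\gamma(T^c)^p]$ is
\[
y^{p\gamma}(c/y)^{p\gamma-\omega_-}=y^{\omega_-}c^{p\gamma-\omega_-},
\]
which matches the $c^{2\gamma-\omega_-}$ of \cref{lem: estimate good} for $p=2$.

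\textbf{Why the conclusion breaks.} With the correct order, the factor $\max_v\chi(uv)^{(p-1)\omega_-}$ you extract is exactly cancelled. The fluctuation is then only bounded by
\[
c^{p\gamma-\omega_-}\sum_{|v|=n}\chi(uv)^{\omega_-}\mathds{1}_{\{uv\in\mathbb{U}^c\}}.
\]
This is small not because the types vanish, but because the additive martingale killed at the barrier has mean ${\tt P}(\hat{\tt S}_k\ge-\log c,\ 1\le k\le n)=O(n^{-1/2})$. That ballot-type estimate is how the paper concludes in the proof of \cref{lem: approx by generation}.

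\textbf{What the paper actually proves.} Even the corrected moment bound is the real technical content, and expanding along a spine does not deliver it. The paper does not bound a full moment uniformly in $\varepsilon$. It splits branches into good and bad according to whether $\sum_i g(\chi(u_ki)/\chi(u_k))\le A(c/\chi(u_k))^{\theta}$ holds along the ancestral line (\cref{def: good points}).
\begin{itemize}
\item The bad part is small in $L^1$, of order $A^{-q}c^{\gamma-\omega_-}$ uniformly in $\gamma$ (\cref{lem: estimate bad}). This uses the $L^{1+q}$ moment from \cref{Assumption B} and the ratio bound \eqref{eq: R gamma c fraction bounds}.
\item On good branches the offspring sums are deterministically capped. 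This yields the $L^2$ bound $CA^2c^{2\gamma-\omega_-}$ via the diagonal, ancestral and branching pair terms.
\end{itemize}
An untruncated second moment would require second moments of the offspring sums, which \cref{Assumption B} does not provide when $p_0<2$.

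To repair your route you would need either this truncation, or a genuine $L^p$ recursion for the barrier-killed system proving $\varepsilon^p\mathbb{E}_y[\lambda^\gamma(T^c)^p]\le Cy^{\omega_-}c^{p\gamma-\omega_-}$ uniformly in $\varepsilon$. In either case the last step must use the $O(n^{-1/2})$ decay rather than $\max_v\chi(uv)\to0$.
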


\begin{rem}
Here we restrict ourselves to convergence along a sequence as in \cite[Proposition 2.15]{BertoinJean2024SMta} though we also believe it also holds for $\gamma\downarrow\omega_- $.
\end{rem}

\noindent We denote by $\lambda^\gamma_c$ the measure $\lambda^\gamma$ restricted to $T_c$. We see ${\tt T}^c=\tt T$ a.s. for some $c$ by \cref{lem: Truncation} and $\frac{1}{c_0}\mu^c = \mu$ on ${\tt T}^c$ by \cref{prop: derivative martingale}. By the relation \eqref{eq: relation between K and c}, it suffices to show that $\frac{(\gamma - \omega_-)}{Kc_0^-}\lambda^{\gamma}_c$ a.s. converges to $\frac{1}{c_0}\mu^c$ along a sequence for $c = \log b$ with $b\in\mathbb{N}$, and then apply the standard diagonal argument. With \eqref{eq: relation between K and c}, \cref{thm: weak convergence length measure} is a direct consequence of the following lemma by adapting the  proof of  \cite[Proposition 2.15]{BertoinJean2024SMta}:

\begin{lem}\label{lem: convergence total mass}
We have
\[\lim_{\gamma\downarrow\omega_-}\frac{(\gamma-\omega_-)}{Kc_0^-}\lambda^{\gamma}_c(T^c) = \mu^c(T^c), \quad \mbox{in } L^1(\mathbb{P}_1). \]
\end{lem}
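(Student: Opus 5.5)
The plan is to compare $\lambda^\gamma_c(T^c)$ with the truncated derivative martingale at a large generation $n$, using the branching property and the exact formula of \cref{prop: expected length}. Fix $c=\mathrm{e}^b$ and write $\mathbb{U}^c_n$ for the individuals of generation $n$ whose ancestral decoration stays below $c$. Split
\[
\lambda^\gamma_c(T^c)=\sum_{|u|<n}\int_{\text{branch }u,\ \le c} f_u^{\gamma-\alpha}\,\mathrm{d}t \;+\; \sum_{u\in\mathbb{U}^c_n}\lambda^\gamma_{c}(T^c_u),
\]
where $T^c_u$ is the subtree above $u$, truncated at $c$. Since $\mathbb{E}[\lambda^{\omega_-}(\mathtt{T}_n)]<\infty$ and the integrands are dominated near $\gamma=\omega_-$, the first term multiplied by $(\gamma-\omega_-)$ tends to $0$ in $L^1$ for each fixed $n$. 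For the second term, conditionally on $\mathcal{F}_n$ and by self-similarity, $\lambda^\gamma_c(T^c_u)$ has mean $\chi(u)^\gamma R_\gamma(c/\chi(u))$. By \cref{prop: expected length},
\[
(\gamma-\omega_-)R_\gamma(\mathrm{e}^{y}) = K\Big(\int_{[0,y]}\mathrm{e}^{(\gamma-\omega_-)x}R(\mathrm{d}x)\Big)\,(\gamma-\omega_-)\int_{[0,\infty)}\mathrm{e}^{-(\gamma-\omega_-)x}R^-(\mathrm{d}x).
\]
By \eqref{eq: renewal theorem}, the last factor tends to $c_0^-$, and the first integral tends to $R(y)$, monotonically and locally uniformly in $y$. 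Hence $\frac{\gamma-\omega_-}{Kc_0^-}\mathbb{E}[\lambda^\gamma_c(T^c_u)\mid\mathcal{F}_n]\to \chi(u)^{\omega_-}R(b-\log\chi(u))$. Summing over $u\in\mathbb{U}^c_n$ gives exactly $D^c_n$, with $L^1$ convergence justified by domination: for $\gamma$ close to $\omega_-$ the terms are bounded by $C\chi(u)^{\omega_-}(1+b-\log\chi(u))\,c^{\gamma-\omega_-}$, which has finite expectation.

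It therefore remains to control the fluctuation
\[
\frac{\gamma-\omega_-}{Kc_0^-}\sum_{u\in\mathbb{U}^c_n}\Big(\lambda^\gamma_c(T^c_u)-\mathbb{E}[\lambda^\gamma_c(T^c_u)\mid\mathcal{F}_n]\Big)
\]
in $L^1$, uniformly in $\gamma$ near $\omega_-$, and to show it vanishes as $n\to\infty$. Combined with $D^c_n\to\mu^c(T^c)$ in $L^1$ from \cref{prop: derivative martingale}, a $\limsup_\gamma$ and then $n\to\infty$ argument concludes.

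This fluctuation estimate is the main obstacle, because the summands have heavy tails: the renormalised masses $(\gamma-\omega_-)\lambda^\gamma$ only have first moments after truncation. I would use a von Bahr–Esseen inequality with exponent $p\in(1,p_0]$ for conditionally independent centred summands. This requires a bound
\[
\mathbb{E}\big[((\gamma-\omega_-)\lambda^\gamma_c(T^c))^p\big]\le C\,(1+b)^{p}\,c^{p(\gamma-\omega_-)}
\]
uniformly in $\gamma$. The scale factor $\chi(u)^{p\omega_-}$ would then give $\sum_{u\in\mathbb{U}^c_n}\chi(u)^{p\omega_-}(1+b-\log\chi(u))^p$, whose expectation tends to $0$, because $\max_{|u|=n}\chi(u)\to 0$ and via the many-to-one formula (\cref{lem: many-to-one}) with $p>1$. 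If the $p$-th moment is too crude, I would instead truncate each summand at level $\chi(u)^{-\omega_-}$-type thresholds and handle the large part by a first-moment tail estimate on the event that the truncated tree has atypically large mass.

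To obtain the $p$-th moment bound itself, I expect to need fluctuation identities for the spine conditioned below the barrier. One route is a recursion over the spine decomposition (\cref{thm: spinal decomposition truncated}), using \cref{Assumption B} to control the reproduction sums $(\sum_i\chi(i)^\gamma)^{p}$. This is the delicate truncation estimate.
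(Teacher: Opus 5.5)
Your reduction is sound and parallels the paper's. The generations $<n$ are negligible after multiplying by $(\gamma-\omega_-)$. The conditional expectations given $\mathcal{F}_n$ converge to $D^c_n$ by \cref{prop: property R gamma c}(1), with domination from part (3) replacing the paper's Scheff\'e argument, and $D^c_n\to\mu^c(T^c)$ in $L^1$.

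The gap is in the fluctuation step. It is not merely left unproved: the moment bound you propose is false. Consider the first-passage stopping line of the decoration into $[c/\mathrm{e},c/\sqrt{\mathrm{e}}]$. As in \cref{lem: Truncation}, the many-to-one formula shows its expected cardinality is at least a constant times $c^{-\omega_-}$, since the overshoot of the oscillating process $\hat\xi$ lands in that window with probability bounded below. By the Markov property and self-similarity, each such point roots a subtree whose $(\gamma-\omega_-)\lambda^\gamma_c$-mass is $y^{\gamma}$ (with $y\ge c/\mathrm{e}$) times a variable whose mean $(\gamma-\omega_-)R_\gamma(c/y)$ is bounded below. Superadditivity of $x\mapsto x^p$ and Jensen then give
\[
\mathbb{E}\big[((\gamma-\omega_-)\lambda^\gamma_c(T^c))^p\big]\ \ge\ C^{-1}c^{p\gamma-\omega_-}=C^{-1}c^{(p-1)\omega_-}\,c^{p(\gamma-\omega_-)} .
\]
This is much larger than $(1+b)^p c^{p(\gamma-\omega_-)}$ for large $c$, and you need your bound for all barriers, since $c/\chi(u)\to\infty$. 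The $p$-th moment is driven by rare excursions of the decoration up to the barrier.

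With the correct order $c^{p\gamma-\omega_-}$, the gain $\chi(u)^{(p-1)\omega_-}$ you rely on disappears exactly: $\chi(u)^{p\gamma}(c/\chi(u))^{p\gamma-\omega_-}=c^{p\gamma-\omega_-}\chi(u)^{\omega_-}$. Your von Bahr--Esseen bound therefore becomes
\[
c^{p\gamma-\omega_-}\,\mathbb{E}\Big[\sum_{u\in\mathbb{U}^c_n}\chi(u)^{\omega_-}\Big]\le c^{p\gamma-\omega_-}\,{\tt P}(\hat{\tt S}_k\ge -b,\ k\le n)=O(n^{-1/2}).
\]
So $\max_{|u|=n}\chi(u)\to 0$ plays no role. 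What kills the fluctuation is the $n^{-1/2}$ decay of the barrier-truncated additive martingale. This only works if the moment bound is sharp, with no extra factor $(1+\log_+(c/\chi(u)))$ multiplying $c^{p\gamma-\omega_-}$. With such a factor the sum is comparable to $D^c_n$, whose expectation does not vanish.

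What you actually need is therefore $\mathbb{E}[((\gamma-\omega_-)\lambda^\gamma_c(T^c))^p]\le C c^{p\gamma-\omega_-}$, uniformly in $\gamma$ near $\omega_-$, and proving it is the whole difficulty. \cref{thm: spinal decomposition truncated} does not give it directly, since it concerns $\mu^c$ and not $\lambda^\gamma$.

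The paper does precisely this with $p=2$. Because \cref{Assumption B} only provides $p_0\le 2$ moments, it discards bad branches (\cref{def: good points}), which are small in $L^1$ by \cref{lem: estimate bad}. It then proves $\mathbb{E}[(\gamma-\omega_-)^2\tilde\lambda^\gamma_c(T^c_G)^2]\le CA^2c^{2\gamma-\omega_-}$ in \cref{lem: estimate good}, via many-to-one computations whose dominant contribution comes from near the barrier, where the logarithmic factors in \cref{prop: property R gamma c}(3),(4) are $O(1)$. An $L^p$ version of that computation, using subadditivity of $x\mapsto x^{p-1}$ and \eqref{eq: sum g moment}, could plausibly replace the good/bad split. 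It would have to be carried out with this sharp exponent, however, and your proposal does not contain it.
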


From now on we focus on the proof of \cref{lem: convergence total mass}. The idea of the proof resembles that in the subcritical case. We first introduce a measure $\tilde{\lambda}^{\gamma}_c$ obtained by transferring the mass of each decorated branch under $\D\lambda^{\gamma}_c$ to its root. The difference between $\tilde{\lambda}^{\gamma}_c$ and ${\lambda}^{\gamma}_c$ is small by a similar argument of claim (ii) in the proof of \cite[Lemma 2.16]{BertoinJean2024SMta}. The next step is the main technical part, which is to compare $\tilde{\lambda}^{\gamma}_c$ with its expectation conditioned on the information before a large generation. This conditional expectation converges to $\mu^c(T^c)$ as the generation goes to $\infty$. We now formalise these ideas.

Consider the contribution from a decorated branch with type $x<c$ to the total mass of $\lambda^{\gamma}_c$. The expectation of $\lambda^{\gamma}_c$ over this decorated branch is given by
\[E_x\bigg[\int^{z}_{0}X(s)^{\gamma-\alpha}\mathds{1}_{\{\tau^c>t\}}\D t\bigg] = x^sE\bigg[\int^{\zeta}_{0}\mathrm{e}^{\gamma\xi_s}\mathds{1}_{\{\tau^{\log(c/x)}>s\}}\D s\bigg]. \]
For ease of notation, we set
$$
r_{\gamma}(c) : = \mathbb{E}[\int^{\zeta}_{0}\mathrm{e}^{\gamma\xi_s}\mathds{1}_{\{\tau^{\log c}>s\}}\D s]
$$
\noindent We could rewrite \eqref{eq: expected length} as
\begin{equation}\label{eq: expected length 2}
R_{\gamma}(c) = \mathbb{E}[\lambda^{\gamma}_c(T^c)] = \mathbb{E}\bigg[\sum_{u\in\mathbb{U}}(\chi(u))^{\gamma}r_{\gamma}\Big((c/\chi(u))\Big)\cdot\mathds{1}_{\{g(\llbracket \rho, \rho_u \rrbracket)\leq c\}} \bigg].
\end{equation}

\noindent A rough bound on $r_{\gamma}(c)$ is
\begin{equation}\label{eq: rough bound r c}
r_{\gamma}(c)\leq \mathbb{E}\bigg[\int^{\zeta}_0 \mathrm{e}^{\gamma\xi_s}\D s \bigg] = -\frac{1}{\psi(\gamma)}
\end{equation}

We define the measure $\tilde{\lambda}^{\gamma}_c $ by

\begin{equation}\label{eq: def length measure condensed}
\tilde{\lambda}^{\gamma}_c = \sum_{u\in \mathbb{U}} \delta_{\rho_u} (\chi(u))^{\gamma} r_{\gamma}\Big(\frac{c}{\chi(u)}\Big)\cdot \mathds{1}_{\{g(\llbracket \rho,\rho_u \rrbracket)\leq c\}}.
\end{equation}

\noindent The next lemma shows that the difference between the total mass of $\tilde{\lambda}^{\gamma}_c$ and ${\lambda}^{\gamma}_c$ remains bounded. The proof of this lemma is close to (ii) in the proof of \cite[Lemma 2.16]{BertoinJean2024SMta}. We use the strategy in \cref{lem: height truncated tree} to sum over intervals and apply the estimates on the local time for the types.

\begin{lem}\label{lem: transfer length to mass}
There exists a constant $C>0$, such that for $\gamma \in (\omega_-, \omega_- + \Delta_0)$,
\begin{equation}\label{eq: transfer length to mass}
\mathbb{E}\Big[\Big|\tilde{\lambda}^{\gamma}_c(T^c)-\lambda^{\gamma}_c(T^c)\Big|\Big]\leq C.
\end{equation}
\end{lem}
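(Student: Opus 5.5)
We will write the difference branch by branch. For $u\in\mathbb U$ with $g(\llbracket\rho,\rho_u\rrbracket)\le c$, set
\[
\Delta_u:=\int_0^{z_u} f_u(t)^{\gamma-\alpha}\mathds 1_{\{g(\llbracket\rho,\cdot\rrbracket)\le c\}}\D t-(\chi(u))^{\gamma}r_\gamma\big(c/\chi(u)\big).
\]
Then $\lambda^\gamma_c(T^c)-\tilde\lambda^\gamma_c(T^c)=\sum_{u}\Delta_u\mathds 1_{\{g(\llbracket\rho,\rho_u\rrbracket)\le c\}}$.

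Conditionally on $\mathcal F_{|u|}$, the generation up to $u$, the branch $(f_u,\eta_u)$ has law $P_{\chi(u)}$. By the Lamperti transformation, the first term of $\Delta_u$ has conditional mean exactly $(\chi(u))^\gamma r_\gamma(c/\chi(u))$. So each $\Delta_u$ is centered given the past. For distinct $u,v$, the cross terms are not simply uncorrelated, because a child's branch is correlated with its parent's branch through the birth position. For this reason we do not aim for an $L^2$ orthogonality argument. We bound crudely instead:
\[
\mathbb E\big|\textstyle\sum_u\Delta_u\big|\le \sum_u \mathbb E\big[|\Delta_u|\mathds 1_{\{g(\llbracket\rho,\rho_u\rrbracket)\le c\}}\big].
\]
We then control each term by conditioning on $\mathcal F_{|u|}$ and using self-similarity:
\[
\mathbb E\big[|\Delta_u|\,\big|\,\mathcal F_{|u|}\big]\le 2(\chi(u))^\gamma r_\gamma(c/\chi(u))\le -\frac{2}{\psi(\gamma)}(\chi(u))^\gamma,
\]
by \eqref{eq: rough bound r c}. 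Here $\psi(\gamma)<0$ uniformly for $\gamma\in(\omega_-,\omega_-+\Delta_0)$ by \cref{Assumption B} and convexity.

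The crude bound alone gives $\sum_u\mathbb E[(\chi(u))^\gamma\mathds 1_{\{\ldots\le c\}}]$. The difficulty is that this sum blows up like $(\gamma-\omega_-)^{-1}$; indeed it is of the order of $R_\gamma(c)$ from \cref{prop: expected length}. So the centering must be exploited. The plan is to use centering within each local-time block, as in \cref{lem: height truncated tree}.

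We organise $\mathbb U^c$ into the stopping lines $\mathcal L^k_n$ of \eqref{eq: def stopping line Lkn}, indexed by the type interval $k\ge -b$ and the visit number $n$. The key step is a second-moment bound per block: the variables $\Delta_u$, $u\in\mathcal L^k_n$, are conditionally independent and centered given the sigma-field generated up to the line $\mathcal L^k_n$. Hence
\[
\mathbb E\Big|\sum_{u\in\mathcal L^k_n}\Delta_u\Big|\le \mathbb E\Big[\Big(\sum_{u\in\mathcal L^k_n}\mathbb E[\Delta_u^2\mid\cdot]\Big)^{1/2}\Big].
\]
By self-similarity, $\mathbb E[\Delta_u^2\mid\cdot]\le C(\chi(u))^{2\gamma}E_1[(\int_0^z X^{\gamma-\alpha})^2]$. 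This moment is finite if $\psi(2\gamma)<0$; otherwise we use $p$-th moments with $p\in(1,p_0]$ as in \eqref{eq: uniform p moment} and the von Bahr--Esseen inequality. This yields
\[
\mathbb E\Big|\sum_{u\in\mathcal L^k_n}\Delta_u\Big|\le C\,\mathbb E\Big[\sum_{u\in\mathcal L^k_n}(\chi(u))^{p\gamma}\Big]^{1/p}.
\]
On $\mathcal L^k_n$ we have $\chi(u)\le \mathrm e^{-k}$, so $(\chi(u))^{p\gamma}\le \mathrm e^{-(p-1)\gamma k}(\chi(u))^{\gamma}$. By \eqref{eq: sum L k n},
\[
\mathbb E\Big[\sum_{u\in\mathcal L^k_n}(\chi(u))^{p\gamma}\Big]\le \mathrm e^{-(p-1)\gamma k}\,\mathrm e^{-(\gamma-\omega_-)k}(1-C(k))^{n-1}.
\]
Summing the $1/p$-powers over $n$ gives a factor $C(k)^{-1}\asymp k+b+2$. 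Summing over $k\ge -b$ then gives $\sum_k (k+b+2)\mathrm e^{-(p-1)\omega_-k/p}$, which converges uniformly in $\gamma$ provided $\omega_->0$. The terms with $k$ near $-b$ involve types close to $c$ and contribute at most a constant depending on $c$, which is allowed. The point is that the extra $\mathrm e^{-(p-1)\gamma k}$ gain from the $p$-th moment replaces the lost factor $\mathrm e^{-(\gamma-\omega_-)k}$.

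The main obstacle is justifying the conditional independence and centering within a stopping line $\mathcal L^k_n$ through the strong branching property. A second subtlety is that the truncation indicator inside $\Delta_u$ depends on the ancestors' barrier event; this dependence is measurable before $u$'s branch, so centering survives. If $\omega_-=0$, the decay must instead come from $(\gamma-\omega_-)$ and a finer argument is needed; I would handle that case separately, or note that $\omega_->0$ under \cref{Assumption B}.
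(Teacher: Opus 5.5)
Your proposal is correct and follows essentially the paper's proof: you split the difference along the stopping lines $\mathcal{L}^k_n$, use that the branch contributions on each line are conditionally independent and centred, and apply a $p$-th moment inequality with $p\in(1,2)$ (von Bahr--Esseen where the paper uses Marcinkiewicz--Zygmund); you then conclude with \eqref{eq: sum L k n}, where the sum over $n$ gives a factor of order $k+b+2$ and the sum over $k$ gives a constant that depends on $c$ but is uniform in $\gamma$. Your caveat that this uniformity needs $\omega_->0$ is one the paper also relies on implicitly (for instance through its choice $\theta=\omega_-/4$).
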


\begin{proof}[Proof of \cref{lem: transfer length to mass}]
Write for $u\in\mathbb{U}$,
\[A^{c}_{\gamma}(u) = \chi(u)^{-\gamma}\int^{z_u}_0 f_u(t)^{\gamma-\alpha}\cdot\mathds{1}_{\{\tau^{\log(c/\chi(u))}>t\}}\mathrm{d}t. \]
\noindent We rewrite the difference of measure by (recall the definition of $\mathcal{L}^k_n$ in \eqref{eq: def stopping line Lkn})
\begin{eqnarray*}
\lambda^{\gamma}_c(T^c) - \tilde{\lambda}^{\gamma}_c(T^c) &=& \sum_{u\in\mathbb{U}}\mathds{1}_{\{g(\llbracket \rho, \rho_u \rrbracket)\leq c\}}\cdot \chi(u)^{\gamma}\bigg(A^{c}_{\gamma}(u) - r_{\gamma}\Big(\frac{c}{\chi(u)}\Big)\bigg) \\
&=& \sum_{k=-b}^{\infty}\sum^{\infty}_{n=1} \sum_{u\in\mathcal{L}^k_n} \mathds{1}_{\{g(\llbracket \rho, \rho_u \rrbracket)\leq c\}}\cdot \chi(u)^{\gamma} \bigg(A^{c}_{\gamma}(u) - r_{\gamma}\Big(\frac{c}{\chi(u)}\Big)\bigg).
\end{eqnarray*}

\noindent Choose $q\in (1, 2)$ to be defined later. By triangle inequality and the Jensen's inequality, we bound the expectation in \eqref{eq: transfer length to mass} by
\[ \mathbb{E}\Big[|\tilde\lambda^{\gamma}_c(T^c)-\lambda^{\gamma}_c(T^c)|\Big] \leq \sum^{\infty}_{k=-b}\sum^{\infty}_{n=1}\mathbb{E}\Bigg[\bigg|\sum_{u\in\mathcal{L}^k_n}\mathds{1}_{\{g(\llbracket \rho, \rho_u \rrbracket)\leq c\}}\cdot \chi(u)^{\gamma}\bigg(A^{c}_{\gamma}(u) - r_{\gamma}\Big(\frac{c}{\chi(u)}\Big)\bigg)\bigg|^q\Bigg]^{1/q}. \]

\noindent Fix $k$ and $n$. Conditioned on the types $\chi(u)$ for $u\in\mathcal{L}^k_n$, the variables $A_\gamma^c(u)$ are independent with mean $r_{\gamma}(\frac{c}{\chi(u)})$. We may then apply the Marcinkiewicz-Zygmund inequality to bound the display above by
\[
c(q)\sum^{\infty}_{k=-b}\sum^{\infty}_{n=1} \mathbb{E}\bigg[\sum_{u\in\mathcal{L}^k_n}\mathds{1}_{\{g(\llbracket \rho, \rho_u \rrbracket)\leq c\}}\cdot(\chi(u))^{\gamma q}\bigg]^{1/q} \cdot E_1\bigg[\bigg| \int^\zeta_0 \exp(\gamma\xi(t))\cdot\mathds{1}_{\{\tau^{\log(c/\chi(u))}>t\}}\mathrm{d}t \bigg|^q\bigg]^{1/q},
\]
where $c(q)$ is a positive constant depending on $q$. We first bound the second expectation. Removing the indicator, for $\gamma\in (\omega_-, \omega_- + \Delta_0)$, we have
\[
\sup_{\gamma\in[\omega_-, \omega_- + \Delta_0]}E_1\bigg[\bigg|\int^\zeta_0 \mathrm{e}^{\gamma\xi(t)}\mathrm{d}t \bigg|^q\bigg] \\ \leq  E_1\bigg[\bigg|\int^\zeta_0 \mathrm{e}^{\omega_-\xi(t)}\mathrm{d}t \bigg|^q\bigg] +  E_1\bigg[\bigg| \int^\zeta_0 \mathrm{e}^{(\omega_-+\Delta_0)\xi(t)}\mathrm{d}t \bigg|^q\bigg].
\]
\cref{Assumption B} and \cite[Lemma 9.1]{BertoinJean2024SMta}  imply that the right hand side is finite. Thus by \eqref{eq: sum L k n}, for constants $C,C'>0$,
\begin{eqnarray*}
\mathbb{E}\Big[|\tilde\lambda^{\gamma}_c(T^c)-\lambda^{\gamma}_c(T^c)|\Big]
&\leq& C c(q)\sum^{\infty}_{k=-b}\sum^{\infty}_{n=1} \mathbb{E}\bigg[ \sum_{u\in\mathcal{L}^k_n} \mathds{1}_{\{g(\llbracket\rho, \rho_u\rrbracket)\leq c\}}\cdot(\chi(u))^{\gamma q}\bigg]^{1/q}\\
&\stackrel{\eqref{eq: sum L k n}}{\leq}& Cc(q)\sum^{\infty}_{k=-b} \sum^{\infty}_{n=1} \mathrm{e}^{-(\gamma q-\omega_-)k/q}\Big(1-\frac{C_2}{k+b+2}\Big)^{(n-1)/q}
\leq C' c^{(\gamma q-\omega_-)/q}.
\end{eqnarray*}

\end{proof}

The next lemma states that the difference between $\tilde{\lambda}^{\gamma}_c(T^c)$ and its expectation conditioned on a large generation is small compared with the scales.

\begin{lem}\label{lem: approx by generation}
Uniformly in $\gamma \in (\omega_-, \omega_- + \Delta_0)$, as $n\to\infty$,
\begin{equation}\label{eq: approx by generation}
(\gamma-\omega_-)\mathbb{E}\bigg[\Big|\tilde{\lambda}^{\gamma}_c(T^c) -  \mathbb{E}[\tilde{\lambda}^{\gamma}_c(T^c)|\mathcal{F}_n]\Big|\bigg]\to 0.
\end{equation}
\end{lem}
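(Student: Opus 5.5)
We split $\tilde{\lambda}^{\gamma}_c(T^c)$ according to generation. Individuals with $|u|<n$ are $\mathcal{F}_n$-measurable and cancel in the difference. For $|u|\geq n$ we group by the ancestor $v$ at generation $n$. By the branching property, given $\mathcal{F}_n$ the subtree contributions
$$Y_v:=\sum_{w}\chi(vw)^{\gamma}r_\gamma\big(c/\chi(vw)\big)\mathds{1}_{\{g(\llbracket\rho,\rho_{vw}\rrbracket)\le c\}}, \qquad |v|=n,$$
are independent. Each has conditional mean $\mathds{1}_{\{g(\llbracket\rho,\rho_v\rrbracket)\le c\}}\chi(v)^{\gamma}\Phi_\gamma(c/\chi(v))$, where $\Phi_\gamma(a):=\mathbb{E}_1[\tilde\lambda^\gamma_a(T^a)]$. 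By self-similarity, $Y_v$ equals $\chi(v)^\gamma$ times an independent copy of $\tilde\lambda^\gamma_{c/\chi(v)}(T^{c/\chi(v)})$ under $\mathbb{P}_1$. The quantity to bound is therefore $\mathbb{E}|\sum_{|v|=n}(Y_v-\mathbb{E}[Y_v|\mathcal{F}_n])|$. Fix $q\in(1,p_0\wedge 2)$ and apply conditional Marcinkiewicz--Zygmund (or von Bahr--Esseen) to get
$$\mathbb{E}\Big|\sum_{|v|=n}(Y_v-\mathbb{E}[Y_v|\mathcal{F}_n])\Big|\le C\,\mathbb{E}\Big[\sum_{|v|=n}\mathds{1}_{\{\cdot\le c\}}\chi(v)^{\gamma q}\,M_q(c/\chi(v))\Big]^{1/q},$$
where $M_q(a):=\mathbb{E}_1[\tilde\lambda^\gamma_a(T^a)^q]$.

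The heart of the proof, and the main obstacle, is a $q$-th moment bound for the truncated tree, uniform in $\gamma\in(\omega_-,\omega_-+\Delta_0)$:
$$(\gamma-\omega_-)^q M_q(a)\le C\,(1+\log a)^{q}a^{q(\gamma-\omega_-)}.$$
At $q=1$ this matches \cref{prop: expected length}. Near $\omega_-$ the factor $K\int_{[0,\infty)} e^{-(\gamma-\omega_-)x}R^-(\D x)$ behaves like $Kc_0^-/(\gamma-\omega_-)$, while $\int_{[0,\log a]}e^{(\gamma-\omega_-)x}R(\D x)\le C(1+\log a)a^{\gamma-\omega_-}$. To prove the $q$-th moment bound I would decompose along the stopping lines $\mathcal{L}^k_m$ exactly as in \cref{lem: height truncated tree} and \cref{lem: transfer length to mass}. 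I would write $\tilde\lambda^\gamma_a$ as a sum over $k\ge -\log a$ and $m\geq 1$ of level-$k$ contributions, and use Minkowski together with the uniform $L^{p}$ bound \eqref{eq: uniform p moment} on the offspring point process. Each $\mathcal{L}^k_m$ sum then contributes
$$\mathbb{E}\Big[\sum_{u\in\mathcal{L}^k_m}\chi(u)^{\gamma q}\Big]^{1/q}\lesssim e^{-(\gamma-\omega_-/q)k}(1-C(k))^{(m-1)/q}.$$
The difficulty is that the naive sum over $k\to\infty$ of $e^{-(\gamma-\omega_-/q)k}$ gives no factor $(\gamma-\omega_-)^{-1}$. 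The genuinely divergent part, coming from deep levels $k$, is controlled using the factor $r_\gamma(c/\chi(u))\le -1/\psi(\gamma)$ from \eqref{eq: rough bound r c}. If the sharp bound is hard, I would accept a weaker bound $(\gamma-\omega_-)^qM_q(a)\le C a^{\delta}$ for small $\delta>0$, which suffices below.

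Plugging in, the right-hand side is bounded by
$$C\,\mathbb{E}\Big[\sum_{|v|=n}\mathds{1}_{\{\cdot\le c\}}\chi(v)^{\omega_-}\,\chi(v)^{\gamma q-\omega_-}(c/\chi(v))^{q\delta'}\Big]^{1/q}.$$
Since $\gamma q-\omega_-\ge (q-1)\omega_->0$, many-to-one (\cref{lem: many-to-one}) turns this into
$${\tt E}\big[e^{-\theta {\hat{\tt S}}_n}\,e^{q\delta'(b+{\hat{\tt S}}_n)}\mathds{1}_{\{\min_{i\le n}{\hat{\tt S}}_i\ge -b\}}\big]$$
with $\theta=(q-1)\omega_->q\delta'$. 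This tends to $0$ uniformly in $\gamma$ as $n\to\infty$: the walk conditioned to stay above $-b$ escapes to $+\infty$, so ${\hat{\tt S}}_n\to\infty$, and the probability of staying above $-b$ is $O(n^{-1/2})$. The degenerate case $\omega_-=0$ would need a separate argument, for instance choosing $\gamma$ in the exponent in place of $\omega_-$. Multiplying through by $(\gamma-\omega_-)$, which was absorbed in the moment bound, gives \eqref{eq: approx by generation} uniformly in $\gamma$.
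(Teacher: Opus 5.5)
Your architecture is different from the paper's. You condition on $\mathcal{F}_n$, use the conditional independence of the $Y_v$, apply von Bahr--Esseen, then use many-to-one. The paper instead first removes bad branches in $L^1$ (\cref{lem: estimate bad}) and then runs an $L^2$ argument on good branches only (\cref{lem: estimate good}).

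The gap is the moment bound that carries everything. The bound $(\gamma-\omega_-)^qM_q(a)\le C(1+\log a)^q a^{q(\gamma-\omega_-)}$ is false for $q>1$. So is your fallback $Ca^{\delta}$ with $q\delta<(q-1)\omega_-$. With probability of order $a^{-\omega_-}$ (the order in \cref{lem: Truncation}, which is sharp in this boundary case), some $u\in\mathbb{U}^a$ has $\chi(u)\in[a\mathrm{e}^{-2},a\mathrm{e}^{-1}]$. Conditional Jensen on the subtree of that $u$ alone gives $\mathbb{E}[\tilde\lambda^\gamma_a(T^a)^q]\gtrsim a^{-\omega_-}a^{\gamma q}R_\gamma(\mathrm{e})^q$. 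Since $R_\gamma(\mathrm{e})\asymp(\gamma-\omega_-)^{-1}$ by \cref{prop: property R gamma c}, this means $(\gamma-\omega_-)^qM_q(a)\gtrsim a^{\gamma q-\omega_-}$, which exceeds your bound by a factor $a^{(q-1)\omega_-}$.

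Consequently the decay $\mathrm{e}^{-\theta{\hat{\tt S}}_n}$ in your last display is not there. With the true order the factors cancel exactly, $\chi(v)^{\gamma q}(c/\chi(v))^{\gamma q-\omega_-}=c^{\gamma q-\omega_-}\chi(v)^{\omega_-}$. The only smallness left is ${\tt P}({\hat{\tt S}}_k\ge -b,\ k\le n)=O(n^{-1/2})$. That suffices only if the moment bound carries no logarithmic factor at all, because ${\tt E}[(b+{\hat{\tt S}}_n)\mathds{1}_{\{\min_{k\le n}{\hat{\tt S}}_k\ge -b\}}]$ does not tend to $0$. Your sketch for the moment bound cannot supply this either. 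The estimate $\mathbb{E}[\sum_{u\in\mathcal{L}^k_m}\chi(u)^{\gamma q}]^{1/q}$ is the Marcinkiewicz--Zygmund bound for \emph{centred} branch fluctuations, as in \cref{lem: transfer length to mass}. It says nothing about $q$-th moments of the uncentred sums that make up $\tilde\lambda^\gamma_a(T^a)$.

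The route can be repaired. What you need is $(\gamma-\omega_-)^q\,\mathbb{E}\big[|\tilde\lambda^\gamma_a(T^a)-R_\gamma(a)|^q\big]\le C a^{\gamma q-\omega_-}$, uniformly in $\gamma\in(\omega_-,\omega_-+\Delta_0)$ and $a\ge 1$, for $q\in(1,p]$ with $p$ as in \eqref{eq: uniform p moment}. To get it, apply Burkholder's inequality to the Doob martingale $m\mapsto\mathbb{E}[\tilde\lambda^\gamma_a(T^a)\,|\,\mathcal{F}_m]$. By \eqref{eq: frac bound by g} and \eqref{eq: sum g moment}, the conditional $q$-th moment of its $m$-th increment is at most $C\sum_{|u|=m}\mathds{1}_{\{u\in\mathbb{U}^a\}}\chi(u)^{\gamma q}R_\gamma(a/\chi(u))^q$. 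Many-to-one and \eqref{eq: R gamma c bounds 2} then bound the sum over $m$ by $C(\gamma-\omega_-)^{-q}a^{\gamma q-\omega_-}\int_{[0,\infty)}\mathrm{e}^{-(q-1)\omega_- z}(1+z)^q\,G_b(\D z)$. Here $b=\log a$ and $G_b(A)=\sum_{m\ge 0}{\tt P}(b+{\hat{\tt S}}_m\in A,\ {\hat{\tt S}}_k\ge -b\ \forall k\le m)$. Since $G_b([z,z+1])\le C(1+z)$ uniformly in $b$, the integral is bounded and no logarithm appears.

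Using this centred moment in your first display, with $b=\log c$, gives $(\gamma-\omega_-)\mathbb{E}\big|\tilde\lambda^\gamma_c(T^c)-\mathbb{E}[\tilde\lambda^\gamma_c(T^c)|\mathcal{F}_n]\big|\le C c^{(\gamma q-\omega_-)/q}\,{\tt P}({\hat{\tt S}}_k\ge -b,\ k\le n)^{1/q}=O(n^{-1/(2q)})$, uniformly in $\gamma$. Repaired this way, your argument uses the $p_0$-th moments of \cref{Assumption B} directly. It avoids both the good/bad truncation and the pairwise second-moment expansion $I_1,I_2,I_3$. The paper's truncation, borrowed from the branching random walk literature, is the device one needs when such $L^q$ control of the offspring is not available.
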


We postpone the proof of \cref{lem: approx by generation} and show in advance that this lemma together with \cref{lem: transfer length to mass} implies \cref{lem: convergence total mass}.

\begin{proof}[Proof of \cref{lem: convergence total mass}]
Fix $n>0$. Calculating the conditional expectation, we have
\begin{equation}\label{eq: two parts length measure}
        \mathbb{E}[\tilde{\lambda}^{\gamma}_{c}(T^c)|\mathcal{F}_n]= \sum_{|u| = n}(\chi(u))^{\gamma}R_{\gamma}(c/\chi(u)) \mathds{1}_{\{g(\llbracket \rho, \rho_u \rrbracket) \leq c\}}
+ \sum_{|u| < n} (\chi(u))^{\gamma}r_{\gamma}(c/\chi(u)) \mathds{1}_{\{g(\llbracket \rho, \rho_u \rrbracket) \leq c\}}.
\end{equation}

\noindent Applying \cref{lem: many-to-one}, the expectation of the second term on the right hand side is bounded by
\[{\tt E}\bigg[\sum^{n-1}_{k=0} \mathrm{e}^{-\gamma{\hat{\tt S}}_k} r_{\gamma}(\log c + {\hat{\tt S}}_k) \bigg],\]

\noindent which remains bounded for $\gamma\in (\omega_-, \omega_- + \Delta_0)$ by \eqref{eq: rough bound r c} and the fact ${\tt E}[\exp((\gamma-\omega_-){\hat{\tt S}}_1)]< \infty$. By \cref{prop: property R gamma c}(1) which will be proved in the next section, as $\gamma\to \omega_-$,
\begin{equation}\label{eq: converge of R gamma a.s.}
    \frac{\gamma-\omega_-}{Kc_0^-} \sum_{|u| = n}(\chi(u))^{\gamma}R_{\gamma}(c/\chi(u)) \mathds{1}_{\{g(\llbracket \rho, \rho_u \rrbracket) \leq c\}} \longrightarrow D^c_n, \quad \mathbb{P}_1\mbox{-a.s.}.
\end{equation}
\noindent Again by \cref{prop: property R gamma c}(1), we have
\begin{align*}
    \frac{\gamma-\omega_-}{Kc_0^-}\mathbb{E}\left[\mathbb{E}[\tilde{\lambda}^{\gamma}_{c}(T^c)|\mathcal{F}_n]\right]=\frac{\gamma-\omega_-}{Kc_0^-} R_\gamma(c)\longrightarrow R(\log c)=\mathbb E[D^c_n].
\end{align*}
Since the expectation of the second term on the right hand side of \eqref{eq: two parts length measure} is bounded, it follows that
\begin{align}\label{eq: converge of R gamma expectation}
     \frac{\gamma-\omega_-}{Kc_0^-}\mathbb{E}\bigg[\sum_{|u| = n}(\chi(u))^{\gamma}R_{\gamma}(c/\chi(u)) \mathds{1}_{\{g(\llbracket \rho, \rho_u \rrbracket) \leq c\}}\bigg]\longrightarrow \mathbb E[D^c_n].
\end{align}
By \eqref{eq: converge of R gamma a.s.} and \eqref{eq: converge of R gamma expectation}, we apply the Scheff\'e’s lemma and conclude that
\begin{equation}\label{eq: length convergence L^1}
    \mathbb E\left[\left|\frac{\gamma-\omega_-}{Kc_0^-}\mathbb{E}[\tilde{\lambda}^{\gamma}_{c}(T^c)|\mathcal{F}_n]-D_n^c\right|\right]\to 0.
\end{equation}

The martingale $(D^c_n)_n$ converges to $\mu^c(T^c)$, as  $n\to\infty$, a.s. under $\mathbb{P}_1$ and in $L^1(\mathbb{P}_1)$. For $\varepsilon > 0$, by \cref{lem: approx by generation} we choose $n$ large enough such that
\[\frac{(\gamma-\omega_-)}{Kc_0^-}\mathbb{E}\bigg[\Big|\tilde{\lambda}^{\gamma}_c(T^c) - \mathbb{E}[\tilde{\lambda}^{\gamma}_c(T^c)| \mathcal{F}_n]\Big|\bigg]\leq \frac{\varepsilon}{4} \qquad \mbox{and} \qquad \mathbb{E}\Big[\Big|\mu^c(T^c) - D^c_n\Big|\Big]\leq \frac{\varepsilon}{4}. \]
Then when $\gamma\in(\omega_-, \omega_- + \Delta)$, we have by \cref{lem: transfer length to mass}
\[\frac{(\gamma-\omega_-)}{Kc_0^-}\mathbb{E}\Big[\Big|\tilde{\lambda}^{\gamma}_c(T^c)-\lambda^{\gamma}_c(T^c)\Big|\Big]\leq \frac{\varepsilon}{4}\]
and by \eqref{eq: length convergence L^1}
\[
\mathbb{E}\Bigg[\Bigg|\mathbb{E}\bigg[\frac{(\gamma-\omega_-)}{Kc_0^-}\tilde{\lambda}^{\gamma}_c(T^c)\bigg| \mathcal{F}_n\bigg]-D^c_n\Bigg|\Bigg]\leq \frac{\varepsilon}{4}.
\]
These four displays imply the desired convergence in $L^{1}(\mathbb{P})$.
\end{proof}

\subsubsection{More on the expected length}\label{sec: expected length}

Towards the proof of \cref{lem: approx by generation}, we begin with a technical result on the expectation of the total mass of $\lambda^{\gamma}_c$ introduced in \cref{prop: expected length}.

\begin{prop}[Properties of $R_{\gamma}(c)$]\label{prop: property R gamma c}
The following statements hold.

\begin{enumerate}
    \item For fixed $c>0$, as $\gamma\downarrow \omega_-$, we have
    \begin{equation}\label{eq: R gamma c asymptotics}
    (\gamma-\omega_-)R_{\gamma}(c)\to Kc_0^-R(\log c).
    \end{equation}

    \item For $0<\varepsilon<\Delta_0$, there exists a constant $C>0$, such that for all $c>0$ and $\gamma \in (\omega_-+\varepsilon, \omega_-+\Delta_0)$, we have
    \begin{equation}\label{eq: R gamma c bounds}
    (\gamma - \omega_-)R_{\gamma}(c) \leq C\cdot c^{\gamma-\omega_-}.
    \end{equation}

    \item There exists a constant $C>0$, such that for all $c>0$ and $\gamma \in (\omega_-, \omega_-+\Delta_0)$, we have
    \begin{equation}\label{eq: R gamma c bounds 2}
    (\gamma - \omega_-)R_{\gamma}(c) \leq C\cdot c^{(\gamma-\omega_-)}(1+\log_+ c).
    \end{equation}
    \item There exists a constant $C>0$ such that for $c_1, c_2>0$ and $\gamma \in (\omega_-, \omega_-+\Delta_0)$, we have
    \begin{equation}\label{eq: R gamma c fraction bounds}
    \frac{R_{\gamma}(c_1)}{R_{\gamma}(c_2)}\leq C \Big(\frac{c_1}{c_2}\Big)^{(\gamma-\omega_-)}(1+\log_+(c_1/c_2)).
    \end{equation}
    Here $\log_+(x):=\max(\log x, 0)$.
\end{enumerate}

\end{prop}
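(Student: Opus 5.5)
The plan is to work entirely from the explicit formula of \cref{prop: expected length}. With $\delta=\gamma-\omega_-$ and $b=\log c$, it reads
\[R_\gamma(c)=K\,I_\delta(b)\,J_\delta,\qquad I_\delta(b)=\int_{[0,b]}\mathrm{e}^{\delta x}R(\D x),\qquad J_\delta=\int_{[0,\infty)}\mathrm{e}^{-\delta x}R^-(\D x).\]
All four statements then reduce to elementary renewal estimates on $R$ and $R^-$. The two inputs are the uniform local bound coming from \eqref{eq: renewal theorem}, namely $R([x,x+1])\le C$ and $R^-([x,x+1])\le C$ for all $x\ge0$ (subadditivity plus finiteness), and the asymptotic density $c_0^-$ of $R^-$. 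For $c\le 1$ we have $b<0$, so $I_\delta(b)=0$ (or only the atom at $0$ contributes). Those cases are trivial, and we may assume $b\ge 0$.

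\textbf{Step 1: asymptotics of $J_\delta$.} We show $\delta J_\delta\to c_0^-$ as $\delta\downarrow0$. Integrating by parts gives $\delta J_\delta=\delta^2\int_0^\infty \mathrm{e}^{-\delta x}R^-(x)\,\D x$, up to a boundary term $\delta R^-(0)\to0$. Substituting $x=y/\delta$, this becomes $\int_0^\infty \mathrm{e}^{-y}\,\delta R^-(y/\delta)\,\D y$. Since $\delta R^-(y/\delta)\to c_0^- y$ and $R^-(x)\le C(1+x)$ supplies domination, we get the limit $c_0^-$ (Abelian theorem). The same computation gives the uniform bound $\delta J_\delta\le C$ for $\delta\in(0,\Delta_0)$.

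\textbf{Step 2: the four items.} For (1), with $b$ fixed, $I_\delta(b)\to R([0,b])=R(b)$ by dominated convergence, since $R$ is a finite measure on $[0,b]$. Combined with Step 1 this gives $\delta R_\gamma(c)\to Kc_0^-R(\log c)$.

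For (3), bound $I_\delta(b)\le \mathrm{e}^{\delta b}R(b)\le C\mathrm{e}^{\delta b}(1+b)$. Together with $\delta J_\delta\le C$ this yields $\delta R_\gamma(c)\le Cc^{\delta}(1+\log_+c)$.

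For (2), use the unit-interval bound to write $I_\delta(b)\le \sum_{j=0}^{\lceil b\rceil}C\mathrm{e}^{\delta(j+1)}\le C\mathrm{e}^{\delta}\,\mathrm{e}^{\delta b}/(1-\mathrm{e}^{-\delta})$. Since $\delta>\varepsilon$, this is at most $C_\varepsilon \mathrm{e}^{\delta b}$, and multiplying by $\delta J_\delta\le C$ gives the claim.

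For (4), the factor $J_\delta$ cancels, leaving $I_\delta(b_1)/I_\delta(b_2)$. If $b_1\le b_2$ the ratio is at most $1$, which is below the right-hand side up to the constant, because $(c_1/c_2)^\delta\ge \mathrm{e}^{-\Delta_0\cdot}$... In fact this needs care: when $c_1<c_2$ the bound $(c_1/c_2)^\delta\le 1$ could make the right-hand side small. So the true content is a lower bound on $I_\delta(b_2)$. Use $I_\delta(b_2)\ge R(\{0\})>0$ for small $b_2$, and $I_\delta(b_2)\ge \mathrm{e}^{\delta(b_2-1)}R([b_2-1,b_2])\ge c\,\mathrm{e}^{\delta b_2}$ for large $b_2$, by the renewal theorem. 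For the upper side, split $[0,b_1]=[0,b_2]\cup(b_2,b_1]$. This gives $I_\delta(b_1)\le I_\delta(b_2)+\mathrm{e}^{\delta b_1}R((b_2,b_1])\le I_\delta(b_2)+C\mathrm{e}^{\delta b_1}(1+b_1-b_2)$, where the last step uses the unit-interval bound. Dividing by the lower bound $c\,\mathrm{e}^{\delta b_2}$ yields $C(c_1/c_2)^\delta(1+\log_+(c_1/c_2))$. The case $c_1<c_2$ is handled by monotonicity together with the lower bound, noting that $(c_1/c_2)^\delta$ stays bounded below by $\mathrm{e}^{-\delta(b_2-b_1)}$. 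Here I will actually read (4) as intended for $c_1\ge c_2$; if both ranges are needed, the lower bound on $I_\delta(b_1)$ handles the case where $b_2-b_1$ is bounded.

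The main obstacle is making the bounds uniform in $\delta$ near $0$ in (3) and (4). Concretely, this means avoiding the $1/\delta$ factor in $I_\delta$, which is why the crude bound $\mathrm{e}^{\delta b}R(b)$ is used in (3), and obtaining a $\delta$-uniform lower bound for $I_\delta(b_2)$ in (4) from the renewal theorem.
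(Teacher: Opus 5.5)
Your treatment of (1)--(3) is correct and essentially the paper's argument. Both go through the factorisation $R_\gamma(c)=K\,I_\delta(\log c)\,J_\delta$ and the Abelian limit $\delta J_\delta\to c_0^-$. For (3), your bound $I_\delta(b)\le \mathrm{e}^{\delta b}R(b)$ is slightly more direct than the paper's comparison with $\int \mathrm{e}^{\delta x}\,\mathrm{d}x$.

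The gap is in (4). You only prove it for $c_1\ge c_2$, or for $\log(c_2/c_1)$ bounded, and the remark about ``monotonicity together with the lower bound'' does not handle $c_1<c_2$.

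\begin{itemize}
\item In that case the target is $I_\delta(b_1)\le C\,\mathrm{e}^{-\delta(b_2-b_1)}I_\delta(b_2)$. The right side decays as $b_2-b_1\to\infty$ for fixed $\delta$, so knowing the ratio is at most $1$ does not help.
\item Your lower bound $I_\delta(b_2)\ge c\,\mathrm{e}^{\delta b_2}$ uses only the last unit interval and is too weak. It gives $I_\delta(b_1)/I_\delta(b_2)\le C\,\mathrm{e}^{-\delta(b_2-b_1)}\cdot\mathrm{e}^{-\delta b_1}I_\delta(b_1)$. The extra factor $\mathrm{e}^{-\delta b_1}I_\delta(b_1)$ is of order $\min(b_1,1/\delta)$, which is not bounded uniformly in $\delta\in(0,\Delta_0)$.
\item This case is needed downstream. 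In \eqref{eq: frac bound by g}, (4) is applied with $c_1=c(x)/\chi(i)$ and $c_2=c(x)$. Children with $\chi(i)>1$, which occur e.g.\ in Example~\ref{ex: 1}, give $c_1<c_2$ with $c_2/c_1$ as large as $c(x)$.
\end{itemize}

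The missing ingredient is a two-sided comparison, uniform in $\delta$, valid for $b$ large:
$I_\delta(b)\asymp\int_0^b\mathrm{e}^{\delta x}\,\mathrm{d}x=\mathrm{e}^{\delta b}(1-\mathrm{e}^{-\delta b})/\delta\asymp\mathrm{e}^{\delta b}\min(b,1/\delta)$.
You get it by summing the renewal bounds $c_-a\le R([x,x+a])\le c_+a$ (for $x\ge M$) over all blocks, not just the last one. Combined with the fact that $b\mapsto(1-\mathrm{e}^{-\delta b})/\delta$ is increasing, this gives (4) for both orderings of $c_1,c_2$. That is exactly what the paper's final display does.

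Separately, your claim $R(\{0\})>0$ is false in general. If $\sigma^2>0$, then $0$ is regular upwards and $R(0)=0$; in the branching Bessel example $\hat\xi$ is a Brownian motion and $R(b)=b$. Then $R_\gamma(c_2)\to0$ as $c_2\downarrow1$, so (4) genuinely fails for $c_2$ near $1$. The paper's proof tacitly makes the same assumption (``the fraction is bounded by constants''). So this is a defect of the statement, which should be restricted to something like $c_2\ge c_*>1$, rather than a step you can prove.
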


\begin{proof}
We first prove \eqref{eq: R gamma c asymptotics}. As $\gamma\downarrow\omega_-$, we have
$\psi(\gamma)\to \psi(\omega_-)$ and
\[\int_{[0, \log c]} \mathrm{e}^{(\gamma-\omega_-)x}R(\mathrm{d}x) \to R([0, \log c]) = R(\log c)\]
\noindent by monotone convergence theorem. By integration by parts, we have
\begin{align*}
    (\gamma-\omega_-)\int_{[0, \infty)} \mathrm{e}^{-(\gamma-\omega_-)x}R^+(\mathrm{d}x) &= (\gamma-\omega_-)^2 \int^{\infty}_0 \mathrm{e}^{-(\gamma-\omega_-)x}R^+(x)\mathrm{d}x \\
&= (\gamma-\omega_-) \int^{\infty}_0 \mathrm{e}^{-y}R^+(\frac{y}{\gamma-\omega_-})\mathrm{d}y.
\end{align*}
\noindent Here we change variables by setting $y = (\gamma-\omega_-)x$. Since $(\gamma-\omega_-)R^+(y/(\gamma-\omega_-))$ converges to $c_0^-$ and is bounded by a linear function in $y$, the dominated convergence theorem implies that the last display converges to $c_0^-$.

To prove \eqref{eq: R gamma c bounds} and \eqref{eq: R gamma c bounds 2}, notice that the term $(\gamma-\omega_-)\int_{[0,\infty)} \mathrm{e}^{-(\gamma-\omega_-)x}R^+(\mathrm{d}x)$ is bounded. It remains to control the integral $\int_{[0, \log c]}\mathrm{e}^{(\gamma-\omega_-)x}R(\D x)$. From \eqref{eq: renewal theorem}, there exist  constants $M>a\ge 1$ and $c_+\ge c_->0$ such that for $x\ge M$, $ c_-a \le  R([x,x+a])\le c_+a$. For each $n\geq 1$, we have
\begin{align*}
    K(n) := \int_{[M+a(n-1), M+an]}\mathrm{e}^{(\gamma-\omega_-)x} R(\D x) &\leq \mathrm{e}^{(\gamma-\omega_-)(M+an)}c_+a\\
   & \leq \mathrm{e}^{(\gamma-\omega_-)a}c_+\int^{M+an}_{M+a(n-1)}\mathrm{e}^{(\gamma-\omega_-)x}\D x
\end{align*}

\noindent and
\[K(n)\geq \mathrm{e}^{(\gamma-\omega_-)(M+(a-1)n)}c_-a\geq \mathrm{e}^{-(\gamma-\omega_-)a}c_-\int^{M+an}_{M+a(n-1)}\mathrm{e}^{(\gamma-\omega_-)x}\D x.\]

\noindent When $\log c\le 2M$, we see that $R_{\gamma}(0)\leq R_{\gamma}(c_i)\leq R_{\gamma}(2M)$. When $\log c>2M$, we have the upper bound
\begin{equation}\label{eq: R gamma upper bound}
R_{\gamma}(c)\leq R_{\gamma}(2M)+c_+\sum_{n=1}^{[(\log c-M)/a]+1}K(n)
\le  R_{\gamma}(M)+ \mathrm{e}^{(\gamma-\omega_-)a}c_+\int^{\log c+a}_{M} \mathrm{e}^{(\gamma-\omega_-)x}\D x
\end{equation}
\noindent and the lower bound
\begin{equation}\label{eq: R gamma lower bound}
    R_{\gamma}(c)\ge R_{\gamma}(M)+c_-\sum_{n=1}^{[(\log c-M)/a]}K(n)\ge R_{\gamma}(M)+\mathrm{e}^{-(\gamma-\omega_-)a}c_-\int^{\log c-a}_{M} \mathrm{e}^{(\gamma-\omega_-)x}\D x.
\end{equation}

\noindent By calculating the integral $ \int^{\log c-a}_{M} \mathrm{e}^{(\gamma-\omega_-)x}\D x$, \eqref{eq: R gamma c bounds} and \eqref{eq: R gamma c bounds 2} are direct consequences of \eqref{eq: R gamma upper bound}.

We finally prove \eqref{eq: R gamma c fraction bounds}. By \eqref{eq: expected length}, we have
\[\frac{R_{\gamma}(c_1)}{R_{\gamma}(c_2)} = \frac{\int_{[0, \log c_1]} \mathrm{e}^{(\gamma-\omega_-)x}R(\mathrm{d}x)}{\int_{[0, \log c_2]} \mathrm{e}^{(\gamma-\omega_-)x}R(\mathrm{d}x)}. \]

\noindent We argue by distinguishing cases depending on the values of $c_1$ and $c_2$. When $\log c_1, \log c_2\leq 2M$, the fraction is bounded by constants. When $\log c_1 > 2M\geq \log c_2$ ($\log c_2 > 2M\geq \log c_1$), $R_{\gamma}(c_2)$ ($R_{\gamma}(c_1)$) is bounded from below (above), and the upper (lower) bound in \eqref{eq: R gamma upper bound} (\eqref{eq: R gamma lower bound}) gives the desired bound. When $\log c_1, \log c_2>2M$, there exists a constant $C>0$ such that
\[
\frac{R_{\gamma}(c_1)}{R_{\gamma}(c_2)}\leq \frac{R_{\gamma}(M)+\mathrm{e}^{(\gamma-\omega_-)a}c_+\int^{\log c_1+a}_{M} \mathrm{e}^{(\gamma-\omega_-)x}\D x}{R_{\gamma}(M)+\mathrm{e}^{-(\gamma-\omega_-)a}c_-\int^{\log c_2-a}_{M} \mathrm{e}^{(\gamma-\omega_-)x}\D x}
\le C\Big(\frac{c_1}{c_2}\Big)^{\gamma-\omega_-} \left(1+\log_+(c_1/c_2)\right).
\]

\noindent We conclude by combining all four cases.
\end{proof}

For future use, we put a random walk analogue of the estimates in \cref{prop: property R gamma c}.

\begin{Cor}
For the random walk $({\hat{\tt S}}_n)_{n\geq 0}$, we have the following discrete analogue of \cref{prop: property R gamma c}(2): For $0<\varepsilon<\Delta_0$, there exists a constant $C>0$ such that for all $b>0$ and $\gamma \in (\omega_-+\varepsilon, \omega_-+\Delta_0)$, we have
\begin{equation}\label{eq: green function r.w.}
(\gamma-\omega_-)\cdot{\tt E}\left[\sum^{{\tt T}^+_b-1}_{n=0} \mathrm{e}^{-(\gamma-\omega_-){\hat{\tt S}}_n}\right]\leq C\mathrm{e}^{(\gamma-\omega_-)b}.
\end{equation}
\end{Cor}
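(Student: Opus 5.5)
We will mimic the continuous-time argument of \cref{prop: property R gamma c}, replacing the Wiener--Hopf factorisation of \cite[Lemma 20, Chapter VI]{bertoin1996levy} by its discrete counterpart for the walk ${\hat{\tt S}}$. Set $\delta=\gamma-\omega_-\in(\varepsilon,\Delta_0)$. The quantity to bound is the Green function of ${\hat{\tt S}}$ killed at ${\tt T}^+_b$, integrated against $\mathrm{e}^{-\delta x}$.

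The first step is the classical duality (Feller) identity. It decomposes each time $n<{\tt T}^+_b$ at the last time the running minimum is attained before $n$. Concretely, for nonnegative $f$,
\[
{\tt E}\bigg[\sum_{n=0}^{{\tt T}^+_b-1} f({\hat{\tt S}}_n)\bigg]=\int\!\!\int f(-y+z)\,\mathds{1}_{\{z-y<b\}}\,{\tt U}^-(\D y)\,{\tt U}^+(\D z),
\]
where ${\tt U}^-$ is the renewal measure of the (weak or strict, matching conventions) descending ladder heights of ${\hat{\tt S}}$ and ${\tt U}^+$ that of the ascending ladder heights. In the notation of \eqref{eq: def R rw} these are ${\tt R}(\D y)$ and ${\tt R}^+(\D z)$, up to the orientation $-{\hat{\tt S}}$ used there. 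Alternatively, we can bound the left side above using only this factorised form, which suffices since we only need an upper bound. Dropping the constraint $z-y<b$ partially, we bound the integrand using only $z< b+y$.

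With $f(x)=\mathrm{e}^{-\delta x}$ the integrand is $\mathrm{e}^{\delta y}\mathrm{e}^{-\delta z}$. This is the wrong orientation for a naive product bound: $y$ is unbounded, so we must keep the constraint. We substitute $w=y-z>-b$, which gives the bound
\[
\int_{z\ge0}\mathrm{e}^{-\delta z}\int_{y>z-b}\ldots
\]
A cleaner route is to use the reversed orientation. By the many-to-one convention ${\hat{\tt S}}=-\log\chi$, so the killing at ${\tt T}^+_b$ is in the upward direction of ${\hat{\tt S}}$ and the weight $\mathrm{e}^{-\delta{\hat{\tt S}}_n}$ is large when ${\hat{\tt S}}_n$ is very negative. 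We therefore write the decomposition as (value of the ascending part $z\in[0,b)$) $\times$ (descending excursion $y\ge0$), with ${\hat{\tt S}}_n=z-y$ and constraint $z<b$. This yields
\[
{\tt E}\bigg[\sum_{n<{\tt T}^+_b}\mathrm{e}^{-\delta{\hat{\tt S}}_n}\bigg]\le \bigg(\int_{[0,b)}\mathrm{e}^{-\delta z}{\tt U}^+(\D z)\bigg)\bigg(\int_{[0,\infty)}\mathrm{e}^{\delta y}{\tt U}^-(\D y)\bigg).
\]
This is the exact analogue of \eqref{eq: expected length}, where the factor that grows is the one truncated at $b$. We must fix the sign convention carefully so that the exponentially growing factor is the truncated one. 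In \eqref{eq: expected length} the truncated factor is $\int_{[0,\log c]}\mathrm{e}^{\delta x}R(\D x)$ and the untruncated one is $\int\mathrm{e}^{-\delta x}R^-(\D x)$; the same structure must appear here.

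The final step is to bound the two factors using the renewal theorem \eqref{eq: def tt c_0}. This gives ${\tt U}([x,x+1])\le C$ uniformly in $x$, by subadditivity of renewal functions. The untruncated factor satisfies $\int_0^\infty\mathrm{e}^{-\delta x}{\tt U}(\D x)\le C\sum_{k\ge0}\mathrm{e}^{-\delta k}\le C/\delta$, so $\delta$ times it is bounded. The truncated factor satisfies $\int_{[0,b]}\mathrm{e}^{\delta x}{\tt U}(\D x)\le C\sum_{k\le b}\mathrm{e}^{\delta k}\le C\mathrm{e}^{\delta b}/(1-\mathrm{e}^{-\delta})\le C_\varepsilon\mathrm{e}^{\delta b}$, using $\delta>\varepsilon$. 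Multiplying the two bounds gives \eqref{eq: green function r.w.}.

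The main obstacle is the first step: stating the discrete duality lemma with the correct weak/strict ladder conventions and the correct direction relative to the barrier. The walk may have atoms, so ties matter. Since only an upper bound is needed, we can be generous and use weak ladder processes on both sides, which only enlarges the right-hand side.
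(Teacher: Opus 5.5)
Your overall plan is the intended one: factorise the killed Green function into a product of two ladder-height renewal measures, as in \eqref{eq: expected length}, then bound the factors using ${\tt U}([x,x+1])\le C$. The paper states the corollary without proof, as the discrete counterpart of the proof of \cref{prop: property R gamma c}(2). Your final paragraph of estimates is also correct. However, the factorisation you actually derive is not the one you then bound, so the argument has a genuine gap at its key step.

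You explicitly read ${\tt T}^+_b$ as the first passage of ${\hat{\tt S}}$ above $b$, with weight $\mathrm{e}^{-\delta{\hat{\tt S}}_n}$. Under that reading the left-hand side of \eqref{eq: green function r.w.} is $+\infty$. Since ${\hat{\tt S}}$ oscillates, it reaches $(-\infty,-x]$ before $[b,\infty)$ with probability of order $1/x$, and there the weight is at least $\mathrm{e}^{\delta x}$. Your own product shows the same thing: $\int_{[0,\infty)}\mathrm{e}^{\delta y}{\tt U}^-(\D y)=\infty$ for a renewal measure with linear growth. You notice the mismatch with \eqref{eq: expected length} but never resolve it. Your final step then bounds a different product, in which the signs of the exponents are swapped between the truncated and untruncated factors.

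Your first display is also not an identity. For killing at an upper barrier you must split at the time the running \emph{maximum} is attained, and the constraint bears on that maximum ($z<b$), not on $z-y$. With only $z-y<b$ the integral diverges.

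The repair is to use the orientation in which the statement is true and in which it is used later. There $u\in\mathbb{U}^c$ means $\log\chi(u_k)\le\log c$, and the ladder processes of \cref{sec: fluctuations} are those of $-{\hat{\tt S}}$. (The paper's notation is not consistent on this point; compare the proof of \cref{lem: height truncated tree}.) So put $Y=-{\hat{\tt S}}$, and let ${\tt T}^+_b$ be the first time $Y$ exceeds $b$.

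For $n<{\tt T}^+_b$, split at the first time $\sigma\le n$ at which $\max_{j\le n}Y_j$ is attained. Then $\sigma$ is a strict ascending ladder epoch with $Y_\sigma\le b$. Time reversal turns the piece of path after $\sigma$ into weak descending ladder epochs. This gives
\[
{\tt E}\bigg[\sum_{n<{\tt T}^+_b}\mathrm{e}^{\delta Y_n}\bigg]=\bigg(\int_{[0,b]}\mathrm{e}^{\delta z}\,{\tt R}^+(\D z)\bigg)\bigg(\int_{[0,\infty)}\mathrm{e}^{-\delta y}\,{\tt R}(\D y)\bigg),
\]
with ${\tt R}^+$ and ${\tt R}$ as in \eqref{eq: def R rw}. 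Now the exponentially growing factor is the truncated one, and your renewal estimates finish the proof verbatim.
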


\subsubsection{Proof of \cref{lem: approx by generation}: good and bad branches}
To prove \cref{lem: approx by generation}, we use a similar idea to that in \cite[Section 5]{AidekonElie2026TSLo} by introducing a classification of decorated branches into good and bad branches. The criterion is chosen such that the bad branches is small in $L^1(\mathbb{P})$, while the good branches is bounded in $L^2(\mathbb{P})$. We consider the following condition analogous to \cite[Definition 5.4]{AidekonElie2026TSLo}. The heuristic is to control the increment of the expected length above each generation along the lineage of the vertex. It could be viewed as
a decaying condition compared to the estimates of $R_{\gamma}(c)$ in \cref{prop: property R gamma c}.
\begin{defn}\label{def: good points}
Set $\theta = \frac{\omega_-}{4}$. Let $(c, A)$ be a pair of parameters with $c>1$ and $A>1$. We say that a decorated branch with label $u\in\mathbb{U}^c$ is \textit{good} if for $0\leq k\leq |u|-1$,
\begin{equation}\label{eq: def good points}
\sum^{\infty}_{i=1} \bigg(\frac{\chi(u_ki)}{\chi(u_k)}\bigg)^{\omega_-}\Big(1+\ln_+\Big(\frac{\chi(u_k)}{\chi(u_ki)}\Big)\Big)\leq A\bigg(\frac{c}{\chi(u_k)}\bigg)^{\theta}.
\end{equation}

\noindent We say that a decorated branch is \textit{bad} if it is not good. Denote by $\mathbb{B}^c$ and $\mathbb{G}^c$ the set of labels of bad and good regions, and by $T^c_{G}$ and $T^c_{B}$ for the subset of $T^c$ consisting of good and bad decorated branches, respectively.
\end{defn}

\noindent With the classification, we could state our estimates on the good and bad branches.

\begin{lem}\label{lem: estimate bad}
There exists a constant $C>0$, such that for $(c, A)$ and $\gamma\in(\omega_-, \omega_- + \Delta_0)$, we have
\begin{equation}\label{eq: estimate bad}
\mathbb{E}[(\gamma-\omega_-)\tilde{\lambda}^{\gamma}_c(T^c_B)]\leq CA^{-\eta} c^{(\gamma-\omega_-)}.
\end{equation}
\end{lem}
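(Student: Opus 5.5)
We decompose the bad set according to the first ancestor at which the goodness condition \eqref{eq: def good points} fails. For $u\in\mathbb{B}^c$, let $k(u)$ be the smallest $k\le |u|-1$ such that \eqref{eq: def good points} fails at $u_k$. Then every bad branch descends from a child of some vertex $v=u_k\in\mathbb{U}^c$ satisfying
\[
Y_v:=\sum_{i\ge1}\Big(\frac{\chi(vi)}{\chi(v)}\Big)^{\omega_-}\Big(1+\ln_+\frac{\chi(v)}{\chi(vi)}\Big) > A\Big(\frac{c}{\chi(v)}\Big)^{\theta}.
\]
Bounding crudely, we only keep the condition at $v$ and drop the earlier ancestors. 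Summing the mass transported to roots by \eqref{eq: def length measure condensed} over all descendants of the children of $v$, and using the branching property at the children, we get
\[
\mathbb{E}\big[\tilde\lambda^\gamma_c(T^c_B)\big]\le \mathbb{E}\Big[\sum_{v\in\mathbb{U}^c}\mathds 1_{\{Y_v>A(c/\chi(v))^\theta\}}\sum_{i\ge1}\chi(vi)^{\gamma}\,\mathbb{E}\big[\tilde\lambda^\gamma_{c/\chi(vi)}(T^{c/\chi(vi)})\big]\Big].
\]
We then use $\mathbb{E}[\tilde\lambda^\gamma_{c'}(T^{c'})]=R_\gamma(c')$, which holds because each branch's mass has been replaced by its conditional mean. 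Writing $R_\gamma(c/\chi(vi))\le R_\gamma(c/\chi(v))\cdot C(\chi(v)/\chi(vi))^{\gamma-\omega_-}(1+\ln_+(\chi(v)/\chi(vi)))$ via \cref{prop: property R gamma c}(4), the inner sum becomes at most
\[
C\,\chi(v)^{\gamma}R_\gamma(c/\chi(v))\,Y_v .
\]
This is exactly why the logarithmic factor appears in the definition of good branches. When $\chi(vi)>\chi(v)$ the factor $(\chi(v)/\chi(vi))^{\gamma-\omega_-}\le 1$, so that case is harmless.

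Conditionally on $\mathcal F_{|v|}$, the variable $Y_v$ has the law of $Y:=\sum_i\chi(i)^{\omega_-}(1+\ln_+(1/\chi(i)))$ under $\mathbb{P}_1$, by self-similarity. Hence we need the tail-moment bound
\[
\mathbb{E}\big[Y\mathds 1_{\{Y>s\}}\big]\le C s^{-(p-1)}
\]
for some $p>1$. This follows from Markov's inequality once $\mathbb{E}[Y^p]<\infty$. That in turn comes from \eqref{eq: uniform p moment}: bound $\ln_+(1/x)\le C_\delta x^{-\delta}$, so $Y\le C\sum_i\chi(i)^{\omega_--\delta}$ with $\omega_--\delta\in(\omega_--\Delta_0,\omega_-)$. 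We apply this with $s=A(c/\chi(v))^\theta$. Next, \cref{prop: property R gamma c}(3) gives $(\gamma-\omega_-)R_\gamma(c/\chi(v))\le C(c/\chi(v))^{\gamma-\omega_-}(1+\ln(c/\chi(v)))$. Combining these, we obtain
\[
(\gamma-\omega_-)\mathbb{E}\big[\tilde\lambda^\gamma_c(T^c_B)\big]\le CA^{-(p-1)}\,\mathbb{E}\Big[\sum_{v\in\mathbb{U}^c}\chi(v)^{\gamma}\Big(\frac{c}{\chi(v)}\Big)^{\gamma-\omega_- -\theta(p-1)}\Big(1+\ln\frac{c}{\chi(v)}\Big)\Big].
\]
This gives the bound with $\eta=p-1$, provided the remaining expectation is at most $Cc^{\gamma-\omega_-}$.

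Bounding that last sum is the main obstacle. We apply the many-to-one formula \cref{lem: many-to-one} with the barrier, writing $\chi(v)=\mathrm{e}^{-\hat{\tt S}_n}$ and $b=\log c$. Each summand becomes
\[
\mathrm{e}^{-(\gamma-\omega_-)\hat{\tt S}_n}\,\mathrm{e}^{(\gamma-\omega_--\theta(p-1))(b+\hat{\tt S}_n)}\,(1+b+\hat{\tt S}_n)=\mathrm{e}^{(\gamma-\omega_-)b}\,\mathrm{e}^{-\theta(p-1)(b+\hat{\tt S}_n)}(1+b+\hat{\tt S}_n),
\]
summed over $n$ on the event $\{\hat{\tt S}_i\ge -b\ \forall i\le n\}$. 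The sum is therefore $\mathrm{e}^{(\gamma-\omega_-)b}$ times the Green function of the walk killed below $-b$, integrated against the decaying function $f(y)=\mathrm{e}^{-\theta(p-1)y}(1+y)$ in $y=b+\hat{\tt S}_n\ge0$. By the renewal estimate, the expected local time at height $y\ge 0$ above the barrier is $O({\tt R}(y))=O(1+y)$, uniformly in $b$; this is the same local-time argument used in the proof of \cref{lem: height truncated tree}. Since $f$ decays exponentially, $\sum_y(1+y)f(y)<\infty$. This yields the required $Cc^{\gamma-\omega_-}$ bound, uniformly in $\gamma$ and $c$. The delicate point is making this Green-function bound uniform in $b$. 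If the renewal estimate is too crude, we would instead use the $\mathcal{L}^k_n$ decomposition, as in \eqref{eq: sum L k n}, level by level.
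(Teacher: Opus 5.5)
Your argument is correct and follows the paper's proof essentially step for step. You use the first failing ancestor, the ratio bound \eqref{eq: R gamma c fraction bounds} turning the children's contribution into $\chi(v)^{\gamma}R_\gamma(c/\chi(v))\,Y_v$, the $L^{p}$ tail bound producing $A^{-(p-1)}(c/\chi(v))^{-\theta(p-1)}$, and the many-to-one formula for the walk killed below $-\log c$, which gives $\eta=p-1$. Where you deviate, it is an improvement: the paper invokes \eqref{eq: R gamma c bounds}, whose constant is not uniform as $\gamma\downarrow\omega_-$, whereas you use the log-corrected bound \eqref{eq: R gamma c bounds 2} and absorb the factor $1+b+\hat{\tt S}_n$ with the decay $\mathrm{e}^{-\theta(p-1)(b+\hat{\tt S}_n)}$ and the Green-function bound $O(1+y)$, uniform in $b$. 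One small fix: write $Y\le\sum_i\chi(i)^{\omega_-}+C_\delta\sum_i\chi(i)^{\omega_--\delta}$, since children with $\chi(i)>1$ are not covered by your one-term bound.
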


\begin{lem}\label{lem: estimate good}
There exists a constant $C>0$, such that for $(c, A)$ and $\gamma\in(\omega_-, \omega_- + \Delta_0)$, we have
\begin{equation}
\mathbb{E}[(\gamma-\omega_-)^2\tilde{\lambda}^{\gamma}_c(T^c_G)^2]\leq CA^2c^{(2\gamma-\omega_-)}.
\end{equation}
\end{lem}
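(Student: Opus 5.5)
The plan is a second moment computation in which the pairs of good branches are grouped according to their most recent common ancestor. The defining condition \eqref{eq: def good points} then controls the offspring at that ancestor, and the resulting sum over ancestors is estimated with the many-to-one formula (\cref{lem: many-to-one}) for the walk ${\hat{\tt S}}$ killed below $-b$, where $b=\log c$.

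\textbf{Setup and splitting of the square.} Put $x_u:=\chi(u)^{\gamma}r_\gamma(c/\chi(u))\mathds{1}_{\{u\in\mathbb{G}^c\}}$, so that $\tilde\lambda^\gamma_c(T^c_G)=\sum_u x_u$. Condition \eqref{eq: def good points} only involves the ancestors $u_k$ with $k\le |u|-1$. Hence goodness of $u$ is decided at its parent, and good labels form a subtree of $\mathbb{U}^c$. I expand $(\sum_u x_u)^2$ into three families of pairs $(u,v)$. Family (a): $u=v$. Family (b): $u$ is a strict ancestor of $v$, or the reverse. Family (c): $u\wedge v=w$ with $u\in w i\,\mathbb{U}$ and $v\in wj\,\mathbb{U}$ for some $i\neq j$.

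\textbf{Cross terms, family (c).} I condition on $\mathcal{F}_{|w|+1}$ and drop the goodness constraint below generation $|w|+1$, using only the barrier. The branching property gives
\[
\mathbb{E}\Big[\sum_{u\ge wi}x_u\,\Big|\,\mathcal{F}_{|w|+1}\Big]\le \chi(wi)^{\gamma}R_\gamma(c/\chi(wi)),
\]
and the subtrees rooted at $wi$ and $wj$ are conditionally independent. Next I apply \cref{prop: property R gamma c}(3) and write $\log_+(c/\chi(wi))\le \log(c/\chi(w))+\log_+(\chi(w)/\chi(wi))$. This bounds $(\gamma-\omega_-)^2$ times the family (c) contribution of $w$ by
\[
C\,c^{2(\gamma-\omega_-)}\chi(w)^{2\omega_-}\big(1+\log(c/\chi(w))\big)^2\Big(\sum_i\Big(\tfrac{\chi(wi)}{\chi(w)}\Big)^{\omega_-}\Big(1+\log_+\tfrac{\chi(w)}{\chi(wi)}\Big)\Big)^2 .
\]
Here I use $\gamma$ close to $\omega_-$ and the barrier $\chi\le c$ to replace the exponent $\gamma$ by $\omega_-$, at the price of the prefactor $c^{2(\gamma-\omega_-)}$. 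The children $wi$, $wj$ are good exactly when $w$ satisfies \eqref{eq: def good points} (and its ancestors are good), so the last factor is at most $A^2(c/\chi(w))^{2\theta}$. With $2\theta=\omega_-/2$, \cref{lem: many-to-one} turns the sum over $w\in\mathbb{U}^c$ into
\[
A^2c^{2(\gamma-\omega_-)+\omega_-/2}\;{\tt E}\Big[\sum_{n\ge0}\mathrm{e}^{-\frac{\omega_-}{2}{\hat{\tt S}}_n}\big(1+b+{\hat{\tt S}}_n\big)^2\mathds{1}_{\{{\hat{\tt S}}_k\ge -b,\ k\le n\}}\Big].
\]
By the renewal estimates \eqref{eq: def tt c_0}, the Green function of the walk killed below $-b$ at height $y\ge -b$ grows at most linearly in $y+b$. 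The exponential weight $\mathrm{e}^{-\omega_- y/2}$ then makes the expectation $O(\mathrm{e}^{\omega_- b/2})=O(c^{\omega_-/2})$, exactly as in \eqref{eq: green function r.w.}. Altogether family (c) is bounded by $CA^2c^{2(\gamma-\omega_-)+\omega_-}=CA^2c^{2\gamma-\omega_-}$, which is the claimed rate.

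\textbf{Families (a) and (b), and the expected obstacle.} For family (b), conditioning on $\mathcal{F}_{|u|}$ gives the bound
\[
\mathbb{E}\Big[\sum_u x_u\,\chi(u)^{\gamma}R_\gamma(c/\chi(u))\Big].
\]
Using \eqref{eq: rough bound r c} for $r_\gamma$ and \cref{prop: property R gamma c}(3) for $R_\gamma$, this is at most $(\gamma-\omega_-)^{-1}C c^{\gamma-\omega_-}\,\mathbb{E}[\sum_{u\in\mathbb{U}^c}\chi(u)^{2\gamma-\omega_-}\cdot(\text{log factor})]$. The same killed Green function bound, now with weight $\mathrm{e}^{-(2\gamma-2\omega_-+\omega_-){\hat{\tt S}}}$, gives $C c^{\gamma}\,\mathrm{poly}(\log c)$. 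After multiplying by $(\gamma-\omega_-)^2$, the factor $(\gamma-\omega_-)^{-1}$ is absorbed, and since $c^{\gamma}\log^k c\le Cc^{2\gamma-\omega_-}$ is not automatic, I would rather bound the log factors by $c^{\gamma-\omega_-}$-type powers. The diagonal family (a) is handled in the same way, using a second moment of $\int_0^\zeta \mathrm{e}^{\gamma\xi}\,\D s$ from \cref{Assumption B}. The main obstacle is keeping every constant uniform in $\gamma\in(\omega_-,\omega_-+\Delta_0)$, in particular absorbing the logarithmic factors coming from \cref{prop: property R gamma c}(3) without losing a power of $(\gamma-\omega_-)^{-1}$. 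If needed, I would first try a crude bound $\log(c/y)\le C_\delta (c/y)^{\delta}$ with $\delta<\theta$, which fits inside the margin between $\omega_-$ and $\omega_-/2$ in the Green function estimate.
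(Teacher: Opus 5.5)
Your strategy is the paper's: the same split of the square into diagonal, ancestor--descendant and split-at-the-most-recent-common-ancestor pairs (the paper's $I_1,I_2,I_3$), goodness used at the branching vertex, the branching property one generation below it, then \cref{lem: many-to-one} and a Green-function bound for ${\hat{\tt S}}$ killed below $-b$. Your family (c) is the paper's $I_3$ estimate and is correct. You use \cref{prop: property R gamma c}(3) with the splitting $1+\log\frac{c}{\chi(wi)}\le\big(1+\log\frac{c}{\chi(w)}\big)\big(1+\log_+\frac{\chi(w)}{\chi(wi)}\big)$ instead of the ratio bound \eqref{eq: R gamma c fraction bounds}, and you absorb $1+b+{\hat{\tt S}}_n$ into the exponential weight. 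This is slightly more careful than the paper, which invokes \eqref{eq: R gamma c bounds}, a bound stated only for $\gamma-\omega_-$ bounded away from $0$.

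The gap is in family (b). You stop at an ``obstacle'' that comes from a bookkeeping slip. With $x_u\le|\psi(\gamma)|^{-1}\chi(u)^{\gamma}$ and \cref{prop: property R gamma c}(3), for $u\in\mathbb{U}^c$,
\[
x_u\,\chi(u)^{\gamma}R_\gamma\Big(\frac{c}{\chi(u)}\Big)\le \frac{C\,c^{\gamma-\omega_-}}{\gamma-\omega_-}\,\chi(u)^{\gamma+\omega_-}\Big(1+\log\frac{c}{\chi(u)}\Big),
\]
and not $\chi(u)^{2\gamma-\omega_-}$ as you wrote. After \cref{lem: many-to-one} the weight is therefore $\mathrm{e}^{-\gamma{\hat{\tt S}}_n}$, whose exponent is at least $\omega_-$. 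The logarithm is $1+b+{\hat{\tt S}}_n$, the distance to the barrier, and it is absorbed exactly as in your family (c). Since the killed Green function is $O(1+y+b)$ at height $y$, you get ${\tt E}\big[\sum_n \mathrm{e}^{-\gamma{\hat{\tt S}}_n}(1+b+{\hat{\tt S}}_n)\mathds{1}_{\{{\hat{\tt S}}_k\ge -b,\,k\le n\}}\big]\le C\mathrm{e}^{\gamma b}$ uniformly in $\gamma$. Hence $(\gamma-\omega_-)^2$ times the expected family (b) contribution is at most $C(\gamma-\omega_-)c^{2\gamma-\omega_-}$. There is no polylogarithmic loss, and goodness is not needed at all; the paper uses goodness at $u$ in $I_2$, which only costs an extra factor $A(c/\chi(u))^{\theta}\ge 1$. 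Your first remedy, trading $\log^k c$ for powers of $c^{\gamma-\omega_-}$, cannot be uniform as $\gamma\downarrow\omega_-$, so as written the argument does not close. Your other fallback, $\log(c/y)\le C_\delta(c/y)^{\delta}$, does work once the exponent is corrected, because $\gamma-\delta$ stays bounded away from $0$. For family (a), no second moment of $\int_0^\zeta\mathrm{e}^{\gamma\xi}\,\D s$ enters, which is fortunate since the standing assumptions only secure such moments of some order $p<2$. Indeed $x_u$ is a deterministic function of $\chi(u)$, so $x_u^2\le\psi(\gamma)^{-2}\chi(u)^{2\gamma}\mathds{1}_{\{u\in\mathbb{U}^c\}}$. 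Many-to-one with weight $\mathrm{e}^{-(2\gamma-\omega_-){\hat{\tt S}}_n}$ then gives $Cc^{2\gamma-\omega_-}$, which is the paper's bound on $I_1$.
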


Let us see how these two lemmas imply \cref{lem: approx by generation}.

\begin{proof}[Proof of \cref{lem: approx by generation}]
Fix $\varepsilon>0$. By \cref{lem: estimate bad}, we choose $A>0$ such that uniformly for $\gamma\in(\omega_-, \omega_-+\Delta_0)$,
\[(\gamma-\omega_-)\mathbb{E}\Big[\Big|\tilde{\lambda}^{\gamma}_c(T^c_B) - \mathbb{E}\big[\tilde{\lambda}^{\gamma}_c(T^c_B)\big| \mathcal{F}_n\big]\Big|\Big]\leq 2\mathbb{E}[(\gamma-\omega_-)\tilde{\lambda}^{\gamma}_c(T^c_B)] \leq \frac{\varepsilon}{3}. \]
\noindent It suffices to show that when $n$ is large enough, for $\gamma \in (\omega_-, \omega_-+\Delta_0)$,
\begin{equation}\label{eq: pass to generatio n}
(\gamma-\omega_-)\mathbb{E}\Big[\Big|\tilde{\lambda}^{\gamma}_c(T^c_G(n)) - \mathbb{E}\big[\tilde{\lambda}^{\gamma}_c(T^c_G(n))\big| \mathcal{F}_n\big]\Big| \mathds{1}_{\big\{(\gamma-\omega_-)\big|\tilde{\lambda}^{\gamma}_c(T^c_G) - \mathbb{E}[\tilde{\lambda}^{\gamma}_c(T^c_G)| \mathcal{F}_n]\big|>\frac{\varepsilon}{3}\big\}}\Big] \leq \frac{\varepsilon}{3}.
\end{equation}

\noindent The left-hand side is smaller than
\begin{multline*}
\frac{3}{\varepsilon}(\gamma-\omega_-)^2\mathbb{E}\Big[\Big|\tilde{\lambda}^{\gamma}_c(T^c_G(n)) - \mathbb{E}\big[\tilde{\lambda}^{\gamma}_c(T^c_G(n))\big|\mathcal{F}_n\big]\Big|^2\Big]
\leq \frac{3}{\varepsilon}\mathbb{E}\bigg[\sum_{|u|=n}\mathds{1}_{\{u\in\mathbb{G}^c\}}(\gamma-\omega_-)^2(\tilde{\lambda}^{\gamma}_c(T^c_G(u)))^2\bigg].
\end{multline*}

\noindent Here, $T^c_G(u)$ is the subtree consisting of good branches rooted at $u$. By Markov property and the scaling property, the law of $\tilde{\lambda}^{\gamma}_c(T^c_G(u))$ condition on $\mathcal{F}_n$ has the same law as $(\chi(u))^{\gamma}\tilde{\lambda}^{\gamma}_{c'}(T^{c'}_G)$ where $c' = c/\chi(u)$ and the good branches are defined according to the parameters $(c', A)$. We apply \cref{lem: estimate good} to bound the display above by
\begin{eqnarray*}
\frac{3CA^2}{\varepsilon}\mathbb{E}\Bigg[\sum_{|u|=n}\mathds{1}_{\{u\in\mathbb{G}^c\}} (\chi(u))^{2\gamma}\Big(\frac{c}{\chi(u)}\Big)^{2\gamma-\omega_-}\Bigg]
\leq\frac{3CA^2c^{2\gamma-\omega_-}}{\varepsilon}\mathbb{E}\left[\sum_{|u|=n}\mathds{1}_{\{\chi(u_k)\leq c, k\leq n\}} (\chi(u))^{\omega_-}\right].
\end{eqnarray*}

\noindent Applying the many-to-one formula \eqref{eq: many-to-one}, we have
\[ \mathbb{E}\Bigg[\sum_{|u|=n}\mathds{1}_{\{\chi(u_k)\leq c, k\leq n\}} (\chi(u))^{\omega_-}\Bigg]={\tt P}({\hat{\tt S}}_k\geq -\log c, 1\leq k\leq n)\leq \frac{C{\tt R}(\log c)}{\sqrt{n}}. \]

\noindent The last inequality is from \cite[Equation (A.7)]{ShiZhan2016BRWE}, where the constant $C>0$ only depends on the random walk. Now \eqref{eq: pass to generatio n} holds uniformly for $\gamma\in(\omega_-, \omega_-+\Delta_0)$ when $n$ is large.

\end{proof}

Now we return to the proofs of \cref{lem: estimate bad} and \cref{lem: estimate good}.
\begin{proof}[Proof of \cref{lem: estimate bad}]
Throughout the proof we allow the universal constant $C>0$ to vary from line to line. For simplicity let $g(x): = x^{\omega_-}(1+\ln_+(1/x))$ and let $q=p-1$ with $p$ in \eqref{eq: uniform p moment}. For $0<\delta<\Delta_0\wedge\theta q$, there exists $C = C(\delta)>0$ such that $\ln_+(1/x)\leq Cx^{-\delta}$. Then $g(x)\leq x^{\omega_-}+Cx^{\omega_--\delta}$. Using Minkowski's inequality and \eqref{eq: uniform p moment}, we have
\begin{equation}\label{eq: sum g moment}
\mathbb{E}\bigg[\bigg(\sum^{\infty}_{i=1} g(\chi(i))\bigg)^{1+ q}\bigg] \leq C\mathbb{E} \bigg[\bigg(\sum^{\infty}_{i=1}(\chi(i)^{\omega_-}+\chi(i)^{\omega_-\delta})\bigg)^{1+ q}\bigg] < \infty.
\end{equation}

Let $\mathbb{B}^c_{\leftarrow}$ be the set of labels $v\in\mathbb{U}$ such that $v\in\mathbb{G}^c$, but \eqref{eq: def good points} fails with $u_k$ replaced by $v$. Conversely, if $u\in\mathbb{B}^c$, consider the smallest $k$ such that \eqref{eq: def good points} fails. Then $u_k\in\mathbb{B}^c_{\leftarrow}$. In conclusion, $\mathbb{B}^c = \{vu: v\in \mathbb{B}^c_{\leftarrow}, u\in\mathbb{U}^* \}$. We could rewrite the expected total length of the bad branches by
\[\mathbb{E}[\tilde{\lambda}^{\gamma}_c(T^c_B)] = \sum^{\infty}_{n=0}\mathbb{E}\Bigg[\sum_{|v|=n}\mathds{1}_{\{v\in\mathbb{G}^c\}} \mathds{1}_{\big\{\sum^{\infty}_{i=1}g(\frac{\chi(vi)}{\chi(v)}) > A(\frac{c}{\chi(v)})^{\theta}\big\}}\sum_{u\in\mathbb{U}^*}(\chi(vu))^{\gamma} r_{\gamma}\Big(\frac{c}{\chi(vu)}\Big) \mathds{1}_{\{vu\in\mathbb{U}^c\}}\Bigg]. \]

\noindent For simplicity we introduce the event $B_x = \{\sum^{\infty}_{i=1}g(\frac{\chi(i)}{\chi(\varnothing)})>A(\frac{c}{x})^{\theta}\}$. For fixed $n\geq 0$, we take conditional expectation on $\mathcal{F}_{n}$. The last display is equal to
\begin{equation}\label{eq: proof bad region 1}
\sum^{\infty}_{n=0}\mathbb{E} \Bigg[\sum_{|v|=n}\mathds{1}_{\{v\in\mathbb{G}^c\}} \mathbb{E}_x\bigg[\mathds{1}_{B_x} \sum_{u\in\mathbb{U}^*} (\chi(u))^{\gamma} r_{\gamma}\Big(\frac{c}{\chi(u)}\Big)\mathds{1}_{\{u\in\mathbb{U}^c\}}  \bigg]\Bigg|_{x=\chi(v)}\Bigg].
\end{equation}

\noindent Let $c(x) = c/x$. By the scaling property,
\begin{align}
\mathbb{E}_x\bigg[\mathds{1}_{B_x} \sum_{u\in\mathbb{U}^*} (\chi(u))^{\gamma}r_{\gamma}\Big(\frac{c}{\chi(u)}\Big) \mathds{1}_{\{u\in\mathbb{U}^c\}} \bigg]&= x^{\gamma}\mathbb{E}\bigg[\mathds{1}_{B_x} \sum_{u\in\mathbb{U}^*} (\chi(u))^{\gamma}r_{\gamma}\Big(\frac{c(x)}{\chi(u)}\Big)\mathds{1}_{\{u\in\mathbb{U}^{c(x)}\}} \bigg]\nonumber\\
&= x^{\gamma}\mathbb{E}\bigg[\mathds{1}_{B_x} \sum^{\infty}_{i=1} \tilde{\lambda}^{\gamma}_{c(x)}(T^{c(x)}(i))\bigg]. \label{eq: proof bad 1}
\end{align}

\noindent Taking conditional expectation with respect to $\mathcal{F}_1$, by \eqref{eq: expected length 2} we get
\[\mathbb{E}\bigg[\mathds{1}_{B_x} \sum^{\infty}_{i=1} \tilde{\lambda}^{\gamma}_{c(x)}(T^{c(x)}(i))\bigg] = \mathbb{E}\bigg[\mathds{1}_{B_x} \sum^{\infty}_{i=1} (\chi(i))^{\gamma}R_{\gamma}\Big(\frac{c(x)}{\chi(i)}\Big)\bigg].\]

\noindent Using the bounds of the fraction in \eqref{eq: R gamma c fraction bounds}(iv), we have
\begin{equation}\label{eq: frac bound by g}
\frac{(\chi(i))^{\gamma}R_{\gamma}\Big(\frac{c(x)}{\chi(i)}\Big)}{R_{\gamma}(c(x))} \leq C(\chi(i))^{\gamma}\Big(1 + \log_+\Big(\frac{1}{\chi(i)}\Big)\Big) = Cg(\chi(i)).
\end{equation}

\noindent Hence, it follows by the definition of the set $B_x$ that
\begin{align}
\mathbb{E}\bigg[\mathds{1}_{B_x} \sum^{\infty}_{i=1} (\chi(i))^{\gamma}R_{\gamma}\Big(\frac{c(x)}{\chi(i)}\Big)\bigg]&\leq CR_{\gamma}(c(x))\mathbb{E}\bigg[\mathds{1}_{B_x}\sum^{\infty}_{i=1}g(\chi(i))\bigg]\nonumber\\
&\leq CR_{\gamma}(c(x))A^{- q}c^{-\theta q}x^{\theta q}\mathbb{E}\bigg[\bigg(\sum^{\infty}_{i=1} g(\chi(i))\bigg)^{1+ q}\bigg]. \label{eq: proof bad 2}
\end{align}

\noindent Combining \eqref{eq: sum g moment}, \eqref{eq: proof bad 1} and \eqref{eq: proof bad 2}, we conclude that
\[\mathbb{E}[\tilde{\lambda}^{\gamma}_c(T^c_B)]\leq CA^{- q}c^{-\theta q}\sum^{\infty}_{n=0}\mathbb{E} \Bigg[\sum_{|u|=n}(\chi(u))^{\gamma+\theta q} R_{\gamma}\Big(\frac{c}{\chi(u)}\Big)\mathds{1}_{\{u\in\mathbb{G}^c\}}\Bigg]. \]

\noindent Replacing the event $\{u\in\mathbb{G}^c\}$ by $\{u\in \mathbb U^c\}$ results in an upper bound. Applying \cref{lem: many-to-one},
\[\mathbb{E}[\tilde{\lambda}^{\gamma}_c(T^c_B)]\leq CA^{- q}c^{-\theta q}{\tt E}\bigg[\sum^{{\tt T}_{\log c}^+-1}_{n=0} \mathrm{e}^{-(\gamma-\omega_-+\theta q){\hat{\tt S}}_n}R_{\gamma}(c\mathrm{e}^{\hat{\tt S}_n})\bigg].\]

\noindent It entails from \eqref{eq: R gamma c bounds} that
\begin{equation}\label{eq: R gamma bounds 2}
(\gamma-\omega_-)R_{\gamma}(c\mathrm{e}^{{\hat{\tt S}}_n})\leq Cc^{\gamma-\omega_-}\mathrm{e}^{(\gamma-\omega_-){\hat{\tt S}}_n}.
\end{equation}

\noindent Therefore,
\[\mathbb{E}[(\gamma-\omega_-)\tilde{\lambda}^{\gamma}_c(T^c_B)]\leq CA^{- q}c^{\gamma-\omega_-}c^{-\theta q}{\tt E}\bigg[\sum^{{\tt T}_{\log c}^+-1}_{n=0} \mathrm{e}^{-\theta q{\hat{\tt S}}_n}\bigg]\leq CA^{- q}c^{\gamma-\omega_-}, \]

\noindent where in the last inequality we use \eqref{eq: green function r.w.}.
\end{proof}

\begin{proof}[Proof of \cref{lem: estimate good}]
Throughout the proof we allow the universal constant $C>0$ to vary from line to line. We use double indices $u$ and $v$ to interpret $\tilde{\lambda}^{\gamma}_c(T^c_G)^2$ by
\[\tilde{\lambda}^{\gamma}_c(T^c_G)^2 = \bigg(\sum_{u\in\mathbb{G}^c}(\chi(u))^{\gamma}r_{\gamma}\Big(\frac{c}{\chi(u)}\Big)\bigg)\bigg(\sum_{v\in\mathbb{G}^c}(\chi(v))^{\gamma}r_{\gamma}\Big(\frac{c}{\chi(v)}\Big)\bigg). \]

\noindent When expanding the brackets, there will be three cases for $(u, v)$:
\begin{enumerate}
   \item $u = v$;
   \item $u\ne v$, either $u\prec v$ or $v\prec u$ is satisfied;
   \item $u\ne v$, neither $u\prec v$ nor $v\prec u$ is satisfied.
\end{enumerate}
\noindent In formulation, we write
\begin{align*}
\tilde{\lambda}^{\gamma}_c(T^c_G)^2 =& \sum_{u\in\mathbb{G}^c}(\chi(u))^{2\gamma}r_{\gamma}\Big(\frac{c}{\chi(u)}\Big)^2 + 2\sum_{u\in\mathbb{G}^c} \sum_{v\in\mathbb{G}^c} \mathds{1}_{\{u\ne v\}}\mathds{1}_{\{u\prec v\}}(\chi(u))^{\gamma}r_{\gamma}\Big(\frac{c}{\chi(u)}\Big)(\chi(v))^{\gamma}r_{\gamma}\Big(\frac{c}{\chi(v)}\Big)\\
& + \sum_{u\in\mathbb{G}^c} \sum_{v\in\mathbb{G}^c} \mathds{1}_{\{u\ne v\}}\mathds{1}_{\{u\nprec v\}}\mathds{1}_{\{v\nprec u\}} (\chi(u))^{\gamma}r_{\gamma}\Big(\frac{c}{\chi(u)}\Big)(\chi(v))^{\gamma}r_{\gamma}\Big(\frac{c}{\chi(v)}\Big)\\
=& I_1 + I_2 + I_3.
\end{align*}

We first estimate $\mathbb{E}[I_1]$. Using the rough bound $r_{\gamma}(c/\chi(u))\le |\psi(\gamma)|^{-1}$ by \eqref{eq: rough bound r c}, and replacing $\{u\in\mathbb{G}^c\}$ by $\{\chi(u_k)\leq c, k\leq |u|\}$, we see
\[
\mathbb{E}[I_1] \leq |\psi(\gamma)|^{-2}\mathbb{E}\bigg[\sum_{u\in\mathbb{U}}(\chi(u))^{2\gamma}\mathds{1}_{\{\chi(u_k)<c, k\leq |u|\}}\bigg].
\]
By the many-to-one formula \eqref{eq: many-to-one} and then \eqref{eq: green function r.w.}, we have
\begin{equation}\label{eq: proof good I1 final}
\mathbb{E}[I_1] \leq |\psi(\gamma)|^{-2}{\tt E}\bigg[\sum^{{\tt T}_{\log c}^+-1}_{n=0} \mathrm{e}^{-(2\gamma-\omega_-){\hat{\tt S}}_n}\bigg]\leq Cc^{2\gamma-\omega_-}.
\end{equation}

\noindent Notice that $2\gamma-\omega_-$ is bounded away from $0$, so the last inequality is uniform in $\gamma\in(\omega_-, \omega_-+\Delta_0)$.

We continue to estimate $\mathbb{E}[I_2]$. For $u\prec v$ and $u\ne v$, write $v = uw$ for $w\in \mathbb{U}^*$. Rewrite the summation by
\[
\mathbb{E}[I_2] = 2\sum^{\infty}_{n=0}\mathbb{E}\bigg[\sum_{|u|=n}\mathds{1}_{\{u\in\mathbb{G}^c\}} (\chi(u))^{\gamma}r_{\gamma}\Big(\frac{c}{\chi(u)}\Big)\sum_{w\in\mathbb{U}^*} (\chi(uw))^{\gamma}r_{\gamma}\Big(\frac{c}{\chi(uw)}\Big)\mathds{1}_{\{uw\in\mathbb{G}^c\}}\bigg].
\]

\noindent Define the event (the complement of $B_x$)
\begin{equation}\label{eq: def G_x}
    G_x = \bigg\{\sum^{\infty}_{i=1} g\Big(\frac{\chi(i)}{\chi(\varnothing)}\Big)\leq A\Big(\frac{c}{x}\Big)^{\theta} \bigg\}.
\end{equation}

\noindent For each $n\geq 0$, we take conditional expectation with respect to $\mathcal{F}_n$, whence
\begin{equation}\label{eq: proof good I2 1}
\mathbb{E}[I_2] \le 2\sum^{\infty}_{n=0}\mathbb{E}\bigg[\sum_{|u|=n}\mathds{1}_{\{u\in\mathbb{U}^c\}} (\chi(u))^{\gamma}r_{\gamma}\Big(\frac{c}{\chi(u)}\Big)\mathbb{E}_x\Big[\mathds{1}_{G_x}\sum_{w\in\mathbb{U}^*} (\chi(w))^{\gamma}r_{\gamma}\Big(\frac{c}{\chi(w)}\Big)\mathds{1}_{\{w\in\mathbb{G}^{c}\}}\Big]\bigg|_{x=\chi(u)}\bigg].
\end{equation}

\noindent Recall $c(x)=c/x$. Replacing the indicator $\mathds{1}_{\{w\in\mathbb{G}^c\}}$ by $\mathds{1}_{\{w\in\mathbb{U}^c\}}$, by the scaling property, we bound the inner expectation by
\begin{align}  \mathbb{E}_x\bigg[\mathds{1}_{G_x}\sum_{w\in\mathbb{U}^*} (\chi(w))^{\gamma}r_{\gamma}\Big(\frac{c}{\chi(w)}\Big) \mathds{1}_{\{w\in\mathbb{U}^c\}}\bigg] &=x^{\gamma}\mathbb{E}\bigg[\mathds{1}_{G_x}\sum_{w\in\mathbb{U}^*} (\chi(w))^{\gamma}r_{\gamma}\Big(\frac{c(x)}{\chi(w)}\Big)\mathds{1}_{\{w\in\mathbb{U}^{c(x)}\}}\bigg]\nonumber \\
&=x^{\gamma}\mathbb{E}\bigg[\mathds{1}_{G_x}\sum^{\infty}_{i=1} \tilde{\lambda}^{\gamma}_{c(x)}(T^{c(x)}(i))\bigg]. \label{eq: proof good I2 2}
\end{align}

\noindent Take conditional expectation on $\mathcal{F}_1$,  we have
\[\mathbb{E}\bigg[\mathds{1}_{G_x}\sum^{\infty}_{i=1} \tilde{\lambda}^{\gamma}_{c(x)}(T^{c(x)}(i))\bigg] = \mathbb{E}\bigg[\mathds{1}_{G_x}\sum^{\infty}_{i=1} (\chi(i))^{\gamma}R_{\gamma}\Big(\frac{c(x)}{\chi(i)}\Big)\bigg]. \]

\noindent With the inequality \eqref{eq: frac bound by g} and the definition of $G_x$, it follows that
\[\mathbb{E}\bigg[\mathds{1}_{G_x}\sum^{\infty}_{i=1} (\chi(i))^{\gamma}R_{\gamma}\Big(\frac{c(x)}{\chi(i)}\Big)\bigg]\leq CR_{\gamma}(c(x))\mathbb{E}\bigg[\mathds{1}_{G_x}\sum^{\infty}_{i=1} g(\chi(i))\bigg]\leq CAc^{\theta}x^{-\theta}R_{\gamma}(c(x)). \]

\noindent Combining with \eqref{eq: proof good I2 2}, we bound \eqref{eq: proof good I2 1} by
\[\mathbb{E}[I_2]\leq CAc^{\theta}\sum^{\infty}_{n=0}\mathbb{E}\bigg[\sum_{|u|=n}\mathds{1}_{\{u\in\mathbb{G}^c\}} (\chi(u))^{2\gamma-\theta}R_{\gamma}\Big(\frac{c}{\chi(u)}\Big)\bigg].\]

\noindent Applying \cref{lem: many-to-one}, we rewrite
\[
\mathbb{E}[I_2]\leq CAc^{\theta}{\tt E}\bigg[\sum^{{\tt T}^+_{\log c}-1}_{n=0}\mathrm{e}^{-(2\gamma-\theta-\omega_-){\hat{\tt S}}_n} R_{\gamma}(c\mathrm{e}^{{\hat{\tt S}}_n})\bigg].
\]

\noindent The bound of $R_{\gamma}(c)$ in \eqref{eq: R gamma bounds 2} now implies that
\begin{equation}\label{eq: proof good I2 final}
\mathbb{E}[(\gamma-\omega_-)I_2]\leq CAc^{\theta+\gamma-\omega_-} {\tt E}\bigg[\sum^{{\tt T}_{\log c}^+-1}_{n=0}\mathrm{e}^{-(\gamma-\theta){\hat{\tt S}}_n} \bigg]\leq CAc^{2\gamma-\omega_-}.
\end{equation}

\noindent We use \eqref{eq: green function r.w.} in the last inequality. Since $\gamma-\theta$ is bounded away from $0$, the last step is uniform in $\gamma\in(\omega_-, \omega_-+\Delta_0)$.

We finally estimate $\mathbb{E}[I_3]$. Consider the last common ancestor (denoted by $w$) of $u$ and $v$. Write $u = wu'$ and $v = wv'$. Then $u'_1\ne v'_1\in\mathbb{U}^*$. By definition, $w\in\mathbb{G}^c$. We rewrite $\mathbb{E}[I_3]$ as
\[
  \mathbb{E}\bigg[\sum_{w\in\mathbb{G}^c}\sum_{u', v'\in\mathbb{U}^*}\mathds{1}_{\{u'_1\ne v'_1\}}(\chi(wu'))^{\gamma}r_{\gamma}\Big(\frac{c}{\chi(wu')}\Big)(\chi(wv'))^{\gamma}r_{\gamma}\Big(\frac{c}{\chi(wv')}\Big)\mathds{1}_{\{wu'\in\mathbb{G}^c\}}\mathds{1}_{\{wv'\in\mathbb{G}^c\}}\bigg].
\]

\noindent We use the same argument as in the estimate of $\mathbb{E}[I_2]$. For each $n\geq 0$, taking conditional expectation with respect to $\mathcal{F}_n$ and inserting the indicator of $G_{x}$, we see that $\mathbb{E}[I_3]$ is less than
\begin{equation}\label{eq: proof good I3 1}
 \mathbb{E}\bigg[\sum_{w\in\mathbb{U}^c}\mathbb{E}_x\bigg[\mathds{1}_{G_x}\sum_{u, v\in\mathbb{U}^*}\mathds{1}_{\{u_1\ne v_1\}}(\chi(u))^{\gamma}r_{\gamma}\Big(\frac{c}{\chi(u)}\Big)(\chi(v))^{\gamma}r_{\gamma}\Big(\frac{c}{\chi(v)}\Big)\mathds{1}_{\{u\in\mathbb{G}^c\}}\mathds{1}_{\{v\in\mathbb{G}^c\}}\bigg]\bigg|_{x=\chi(w)}\bigg].
\end{equation}

\noindent Replace the indicator $\mathds{1}_{\{u,v\in\mathbb{G}^c\}}$ by $\mathds{1}_{\{u,v\in\mathbb{U}^c\}}$. By the scaling property, we bound the inner expectation by
\begin{align}
&\mathbb{E}_x\bigg[\sum_{u, v\in\mathbb{U}^*}\mathds{1}_{G_x}\mathds{1}_{\{u_1\ne v_1\}}(\chi(u))^{\gamma}r_{\gamma}\Big(\frac{c}{\chi(u)}\Big)(\chi(v))^{\gamma}r_{\gamma}\Big(\frac{c}{\chi(v)}\Big)\mathds{1}_{\{u\in\mathbb{U}^c\}}\mathds{1}_{\{v\in\mathbb{U}^c\}}\bigg]\nonumber\\
&= x^{2\gamma}\mathbb{E}\bigg[\sum_{u, v\in\mathbb{U}^*}\mathds{1}_{G_x}\mathds{1}_{\{u_1\ne v_1\}}(\chi(u))^{\gamma}r_{\gamma}\Big(\frac{c(x)}{\chi(u)}\Big)(\chi(v))^{\gamma}r_{\gamma}\Big(\frac{c(x)}{\chi(v)}\Big)\mathds{1}_{\{u\in\mathbb{U}^{c(x)}\}}\mathds{1}_{\{v\in\mathbb{U}^{c(x)}\}}\bigg]\nonumber\\
&= x^{2\gamma}\mathbb{E}\bigg[\mathds{1}_{G_x}\sum_{i\ne j}\tilde{\lambda}^{\gamma}_{c(x)}(T^{c(x)}_i)\cdot \tilde{\lambda}^{\gamma}_{c(x)}(T^{c(x)}_j)\bigg].\label{eq: proof good I3 2}
\end{align}

\noindent Condition on the first generation, $T^{c(x)}_i$ and $T^{c(x)}_j$ are independent for $i\ne j$. Taking conditional expectation, we see
\begin{align*}
\mathbb{E}\bigg[\mathds{1}_{G_x}\sum_{i\ne j}\tilde{\lambda}^{\gamma}_{c(x)}(T^{c(x)}(i))\cdot \tilde{\lambda}^{\gamma}_{c(x)}(T^{c(x)}(j))\bigg]= \mathbb{E}\bigg[\mathds{1}_{G_x}\sum_{i\ne j}(\chi(i))^{\gamma}R_{\gamma}\Big(\frac{c(x)}{\chi(i)}\Big)(\chi(j))^{\gamma}R_{\gamma}\Big(\frac{c(x)}{\chi(j)}\Big)\bigg].
\end{align*}

\noindent Using the inequality \eqref{eq: frac bound by g} and the definition of $G_x$ \eqref{eq: def G_x} again, the above display is bounded by
\[CR_{\gamma}(c(x))^2\mathbb{E} \bigg[\mathds{1}_{G_x}\sum_{i\ne j} g(\chi(i)) g(\chi(j))\bigg]\leq C A^2 R_{\gamma}(c(x))^2 x^{-2\theta}c^{2\theta}. \]

\noindent Plugging it back to \eqref{eq: proof good I3 2} then back to \eqref{eq: proof good I3 1}, we bound $\mathbb{E}[I_3]$ by
\[\mathbb{E}[I_3]\leq C A^2 c^{2\theta}\sum^{\infty}_{n=0} \mathbb{E}\bigg[\sum_{|w| = n}\mathds{1}_{\{w\in\mathbb{U}^c\}}(\chi(w))^{2\gamma-2\theta}R_{\gamma}\Big(\frac{c}{\chi(w)}\Big)^2\bigg]. \]

\noindent The many-to-one formula (\cref{lem: many-to-one}) implies that
\[\mathbb{E}[I_3]\leq C A^2 c^{2\theta}{\tt E}\bigg[\sum^{{\tt T}_{\log c}^+-1}_{n=0}\mathrm{e}^{-(2\gamma-2\theta-\omega_-){\hat{\tt S}}_n} R_{\gamma}(c\mathrm{e}^{{\hat{\tt S}}_n})^2\bigg].\]

\noindent Using the bounds \eqref{eq: R gamma bounds 2} and \eqref{eq: green function r.w.}, we have
\begin{equation}\label{eq: proof good I3 final}
\mathbb{E}[(\gamma-\omega_-)^2I_3]\leq CA^2c^{2(\theta+\gamma-\omega_-)}{\tt E}\bigg[\sum^{{\tt T}_{\log c}^+-1}_{n=0}\mathrm{e}^{-(\omega_--2\theta){\hat{\tt S}}_n}\bigg]\leq CA^2c^{(2\gamma-\omega_-)}.
\end{equation}

Combining \eqref{eq: proof good I1 final}, \eqref{eq: proof good I2 final} and \eqref{eq: proof good I3 final}, we conclude \cref{lem: estimate good}.
\end{proof}

\bibliographystyle{alphaurl}
\bibliography{biblio}

\end{document}